\providecommand{\U}[1]{\protect\rule{.1in}{.1in}}
\numberwithin{equation}{section}
\newtheorem{theorem}{Theorem}[section]
\newtheorem{corollary}{Corollary}[section]
\newtheorem{lemma}{Lemma}[section]
\newtheorem{proposition}{Proposition}[section]
\newtheorem{remark}{Remark}[section]
\newtheorem{definition}{Definition}[section]
\numberwithin{equation}{section}
\newcommand{\bbr}{\mathbb{R}}
\newcommand{\bbn}{\mathbb{N}}
\newcommand{\bd}{\begin{definition}}
\newcommand{\ed}{\end{definition}}
\newcommand{\br}{\begin{remark}}
\newcommand{\er}{\end{remark}}
\newcommand{\be}{\begin{equation}}
\newcommand{\ee}{\end{equation}}
\newcommand{\bc}{\begin{corollary}}
\newcommand{\ec}{\end{corollary}}
\begin{document}

\title[Coupled elliptic system]{ Ground states of Nonlinear Schr\"{o}dinger  System  with Mixed Couplings }

\author[J. Wei]{Juncheng Wei  }
\address{\noindent Department of Mathematics, University of British Columbia,
Vancouver, B.C., Canada, V6T 1Z2}
\email{jcwei@math.ubc.ca}

\author[Y.Wu]{Yuanze Wu}
\address{\noindent  School of Mathematics, China
University of Mining and Technology, Xuzhou, 221116, P.R. China }
\email{wuyz850306@cumt.edu.cn}

\begin{abstract}
We consider  the following $k$-coupled nonlinear Schr\"{o}dinger  system:
\begin{equation*}
\left\{\aligned&-\Delta u_j+\lambda_ju_j=\mu_ju_j^3+\sum_{i=1,i\not=j}^k\beta_{i,j} u_i^2u_j\quad\text{in }\bbr^N,\\
&u_j>0\quad\text{in }\bbr^N,\quad u_j(x)\to0\quad\text{as }|x|\to+\infty,\quad j=1,2,\cdots,k,\endaligned\right.
\end{equation*}
where $N\leq 3$, $k\geq3$, $\lambda_j,\mu_j>0$ are constants and $\beta_{i,j}=\beta_{j,i}\not=0$ are parameters. There have been intensive studies for the above system when $k=2$ or the system is purely  attractive ($ \beta_{i,j}>0, \forall i \not = j$) or purely repulsive ($\beta_{i,j}<0, \forall i\not = j $); however very few results are available for $k\geq 3$ when the system admits {\bf mixed couplings}, i.e., there exist $(i_1,j_1)$ and $(i_2,j_2)$ such that $\beta_{i_1,j_1}\beta_{i_2,j_2}<0$. In this paper we give the first systematic and an (almost) complete study on the existence of ground states when the system admits mixed couplings. We first  divide this system into {\bf repulsive-mixed} and {\bf total-mixed} cases. In the first case we prove nonexistence of ground states. In the second case we give an necessary condition for the existence of ground states and also provide  estimates for the Morse index.  The key idea is the {\bf block decomposition} of the system ({\bf optimal block decompositions, eventual block decompositions}), and the measure of total interaction forces  between different {\bf blocks}.  Finally the assumptions on the existence  of ground states are shown to be {\bf optimal} in some special cases.

\vspace{3mm} \noindent{\bf Keywords:} nonlinear Schr\"{o}dinger  system; ground state; mixed coupling; variational method; Morse index.

\vspace{3mm}\noindent {\bf AMS} Subject Classification 2010: 35B09; 35J47; 35J50.%

\end{abstract}

\date{}
\maketitle

\section{Introduction}
We consider the following $k$-coupled nonlinear Schr\"{o}dinger system
\begin{equation}\label{eqn0001}
\left\{\aligned&-\Delta u_j+\lambda_ju_j=\mu_ju_j^3+\sum_{i=1,i\not=j}^k\beta_{i,j} u_i^2u_j\quad\text{in }\bbr^N,\\
&u_j>0\quad\text{in }\bbr^N,\quad u_j(x)\to0\quad\text{as }|x|\to+\infty,\quad j=1,2,\cdots,k,\endaligned\right.
\end{equation}
where $N=1,2,3$, $k\geq3$, $\lambda_j,\mu_j>0$ are constants and $\beta_{i,j}=\beta_{j,i}\not=0$ are coupling parameters. (To simplify the notations, in the following,  we assume  $\beta_{j, j}=\mu_j$.) This paper is concerned with  the existence of  ground states in the general case $ k\geq 3$.

\medskip

It is well known that solutions of \eqref{eqn0001} are related to the solitary waves
of the Gross-Pitaevskii equations, which have applications in many physical models, such as in nonlinear optics and in Bose-Einstein condensates for multi-species condensates (cf. \cite{CLLL04,R03}).  Physically, in the system~\eqref{eqn0001}, $\mu_j$ and $\beta_{i,j}$ are the intraspecies and interspecies scattering lengths respectively, while $\lambda_j$ are from the chemical potentials.  The sign of the scattering length $\beta_{i,j}$ determines whether the interactions of states $i\rangle$ and $j\rangle$ are {\bf repulsive} ($\beta_{i,j}<0$) or {\bf attractive} ($\beta_{i,j}>0$).

\medskip

In the past fifteen years, the two-coupled case of the system~\eqref{eqn0001} (i.e. $k=2$) has been studied extensively in the literature.  An important feature of the two-coupled case is that it only has one coupling, i.e., $\beta_{12}=\beta_{21}$.  Thus, the two-coupled case of the system~\eqref{eqn0001} is either {\bf purely repulsive}  ($\beta_{12}=\beta_{21}<0$) or {\bf purely attractive} ($\beta_{12}=\beta_{21} >0$).  By using variational methods, Lyapunov-Schmidt reduction methods or bifurcation methods, various theorems, about the existence, multiplicity and  qualitative properties of nontrivial solutions of the two-coupled elliptic systems similar to \eqref{eqn0001}, have been established in the literature under various assumptions.  Since it seems almost impossible for us to provide a complete list of references, we refer the readers only to \cite{AC06,AC07,BDW10,BJS16,BS17,BS19,BTWW13,BW06,BWW07,CD10,CLLL04,CTV05,CZ13,DWW10,GJ16,GJ18,LW051,LW06,NTTV10,NTTV12,PW13,S07,ST15,
WW07,WW08,WW081,WWZ17,WZ19,W18} and the references therein.  Roughly speaking, in the two-coupled elliptic systems, the two components tend to segregate with each other in the repulsive case, which leads to phase separations and multi-existence of solutions, while the two components tend to synchronize with each other in the attractive case, which leads to uniqueness of the positive solution. For $k\geq 3$, the purely repulsive case and the purely attractive case of \eqref{eqn0001}, i.e., the couplings $\beta_{i,j}$ have the same sign for all $i\not=j$, have also been studied, see, for example, \cite{B13,LW08,LW10,SZ15,TT12,TT121,TV09,TW13,W17} and the references therein.

\medskip

However, a significant new feature of \eqref{eqn0001} for $k\geq3$ is the presence of  {\bf mixed  couplings}, i.e., there exist $(i_1,j_1)$ and $(i_2,j_2)$ such that $\beta_{i_1,j_1}\beta_{i_2,j_2}<0$. As far as we know, \eqref{eqn0001} for $k\geq3$ with  mixed couplings is less studied in the literature, and the only references are \cite{BSW16,BSW161,DW12,LW05,SW15,SW151,S15,ST16,STTZ16,
TTVW11}.  The primary goal of this paper is to give a complete study about the existence of ground states in the case of {\bf mixed couplings}.  In what follows, for the sake of clarity, let us first introduce some necessary notations and definitions.

\medskip

Let $\mathcal{H}_j$ be the Hilbert space of $H^1(\bbr^N)$ with the inner product
\begin{eqnarray*}\label{eqn0002}
\langle u,v\rangle_{\lambda_j}=\int_{\bbr^N}\nabla u\nabla v+\lambda_juvdx.
\end{eqnarray*}
Its corresponding norm is given by
\begin{eqnarray*}\label{eqn0003}
\|u\|_{\lambda_j}=\langle u,u\rangle_{\lambda_j}^{\frac12}.
\end{eqnarray*}
Let the energy functional of \eqref{eqn0001} be given by
\begin{eqnarray}
\mathcal{E}(\overrightarrow{u})=\frac{1}{2}\sum_{j=1}^k\|u_j\|_{\lambda_j}^2-\frac{1}{4}\sum_{j=1}^k\mu_j\|u_j\|_4^4-\frac{1}{2}\sum_{i,j=1, i<j}^k\beta_{i,j}\|u_iu_j\|_2^2,
\end{eqnarray}
where $\overrightarrow{u}=(u_1,u_2,\cdots,u_k)$ and $\|\cdot\|_p$ is the usual norm in $L^p(\bbr^N)$.  Then, $\mathcal{E}(\overrightarrow{u})$ is of class $C^2$ in $\mathcal{H}:=\prod_{j=1}^k\mathcal{H}_j$.  $\overrightarrow{v}$ is called a positive critical point of $\mathcal{E}(\overrightarrow{u})$ if $\mathcal{E}'(\overrightarrow{v})=\overrightarrow{0}$ in $\mathcal{H}^{-1}$ with $v_j>0$ for all $j$, where $\mathcal{H}^{-1}$ is the dual space of $\mathcal{H}$.  For $N\leq 3$, the standard elliptic regularity theory yields that
positive critical points of $\mathcal{E}(\overrightarrow{u})$ are equivalent to classical solutions of \eqref{eqn0001}.
We define the Nehari manifold of $\mathcal{E}(\overrightarrow{u})$ as follows:
\begin{eqnarray}
\label{Neh1}
\mathcal{N}=\{\overrightarrow{u}\in\widetilde{\mathcal{H}}\mid \overrightarrow{\mathcal{G}}(\overrightarrow{u})=(\mathcal{G}_1(\overrightarrow{u}), \mathcal{G}_2(\overrightarrow{u}),\cdots, \mathcal{G}_k(\overrightarrow{u}))=\overrightarrow{0}\},
\end{eqnarray}
where $\mathcal{G}_j(\overrightarrow{u})=\|u_j\|_{\lambda_j}^2-\mu_j\|u_j\|_4^4-\sum_{i=1,i\not=j}^k\beta_{i,j}\|u_iu_j\|_2^2$
and $\widetilde{\mathcal{H}}=\prod_{j=1}^k(\mathcal{H}_j\backslash\{0\})$.  Clearly, $\mathcal{N}$ contains all positive critical points of $\mathcal{E}(\overrightarrow{u})$.
Let
\begin{eqnarray}
\label{CN}
\mathcal{C}_{\mathcal{N}}=\inf_{\mathcal{N}}\mathcal{E}(\overrightarrow{u}).
\end{eqnarray}
Then, $\mathcal{C}_{\mathcal{N}}$ is well defined and nonnegative.  $\overrightarrow{v}$ is called a ground state of \eqref{eqn0001}, if $\overrightarrow{v}$ is a positive critical point of $\mathcal{E}(\overrightarrow{u})$ with $\mathcal{E}(\overrightarrow{v})=\mathcal{C}_{\mathcal{N}}$.

\medskip

We now continue our discussions on \eqref{eqn0001} for $k\geq3$ with the mixed couplings.  Most of the literature (cf. \cite{BSW16,BSW161,SW15,SW151,S15,ST16}) is devoted to the ``restricted'' ground states of \eqref{eqn0001} for $k\geq3$ with the mixed couplings, by either assuming that $u_j$ are all radially symmetric or considering \eqref{eqn0001} in a bounded domain $\Omega$.  The only paper, which is devoted to the ground states of \eqref{eqn0001}, is \cite{LW05}, where the existence and nonexistence of the ground states of \eqref{eqn0001} with mixed couplings were partially studied when $k=3$.  Thus, the existence of the ground states of \eqref{eqn0001}, for $k\geq3$ with the mixed couplings, remains largely open.   In this paper we give the first result on the existence and nonexistence of the ground states of \eqref{eqn0001} for $k\geq3$ with the mixed couplings, which can be summarized as follows (see Theorem \ref{thm0002} below):
\begin{enumerate}
\item[$(1)$]  Under some technical conditions, (which can be shown to be optimal in some special cases), \eqref{eqn0001} for $k\geq3$ has a ground state in the cases of the {\bf total-mixed couplings}  (the definition can be seen below);
\item[$(2)$]  \eqref{eqn0001} for $k\geq3$ has no ground states in the cases of the {\bf repulsive-mixed couplings} (the definition can also be seen below).
\end{enumerate}

\section{Block Decompositions and Statements of Main Results when $k=3, 4$}
Before we present the results in the general case $ k\geq 3$, we
first explain key  ideas, concepts and main results  when $k=3$ or $4$. We first consider the case $k=3$:
\begin{equation}\label{eqnew0001}
\left\{\aligned&-\Delta u_1+\lambda_1u_1=\mu_1u_1^3+\beta_{1,2} u_2^2u_1+\beta_{1,3}u_3^2u_1\quad\text{in }\bbr^N,\\
&-\Delta u_2+\lambda_2u_2=\mu_2u_2^3+\beta_{1,2} u_1^2u_2+\beta_{2,3}u_3^2u_2\quad\text{in }\bbr^N,\\
&-\Delta u_3+\lambda_3u_3=\mu_3u_3^3+\beta_{1,3} u_1^2u_3+\beta_{2,3}u_2^2u_3\quad\text{in }\bbr^N,\\
&u_i>0\quad\text{in }\bbr^N,\quad u_i(x)\to0\quad\text{as }|x|\to+\infty,\quad i=1,2,3.\endaligned\right.
\end{equation}
We start by recalling known results about \eqref{eqnew0001} in the literature.  As pointed out in \cite{LW05}, there are actually only {\bf four} cases of the couplings:
\begin{enumerate}
\item[$(a)$] The purely attractive case: $\beta_{1,2}>0$, $\beta_{1,3}>0$ and $\beta_{2,3}>0$;
\item[$(b)$] The purely repulsive case: $\beta_{1,2}<0$, $\beta_{1,3}<0$ and $\beta_{2,3}<0$;
\item[$(c)$] The mixed case~$(1)$: $\beta_{1,2}>0$, $\beta_{1,3}<0$ and $\beta_{2,3}<0$;
\item[$(d)$] The mixed case~$(2)$: $\beta_{1,2}>0$, $\beta_{1,3}>0$ and $\beta_{2,3}<0$.
\end{enumerate}

The first two cases (a) and (b) are reminiscent of the $k=2$ case, which can be dealt with similarly.
In the mixed case~$(c)$, the system~\eqref{eqnew0001} can be seen  as a coupled system between an attractively two-coupled system about $(u_1, u_2)$ and  a single equation about $u_3$.  Since $\beta_{1,3}<0$ and $\beta_{2,3}<0$, the interaction between the two-coupled system and the single equation is ``repulsive''.   We re-name this mixed case as the {\bf repulsive-mixed case}. Similar to the repulsive case of $k=2$ (cf. \cite{LW05}), the ground state of \eqref{eqnew0001} does not exist in this case (under some technical conditions). (However,  if  $u_j$ are all radially symmetric or one considers \eqref{eqnew0001} in a bounded domain $\Omega$, then the ``restricted'' ground states of \eqref{eqnew0001} exist for some ranges of $\beta_{i,j}$ (cf. \cite{BSW16,BSW161,SW15,SW151,S15,ST16}).)

\medskip

The most difficult (and interesting) case is  the mixed case~$(d)$. If we still regard the system~\eqref{eqnew0001} as an attractively two-coupled system coupled  with a single equation, then the situation is much more complicated than that in the repulsive-mixed case~$(c)$, since the coupling between them can be both repulsive ($\beta_{2,3}<0$) and attractive ($\beta_{1,2}>0$, $\beta_{1,3}>0$).  We re-name this mixed case as  the {\bf total-mixed case}. In the bounded domain with Dirichlet boundary condition, the existence of the ``restricted'' ground states of \eqref{eqnew0001}, in the total-mixed case~$(d)$, has been studied in \cite{S15,ST16} for some ranges of $\beta_{i,j}$.  However,  it has been proved in \cite{LW05}, by using Lyapunov-Schmidt reduction methods, that \eqref{eqnew0001} has a non-radially symmetric solution in the total-mixed case~$(d)$ for $|\beta_{i,j}|$ all sufficiently small and $|\beta_{2,3}|>>|\beta_{1,2}|,|\beta_{1,3}|$.  Moreover, the energy value of this non-radially symmetric solution is strictly less than that of the uniquely radially symmetric solution of \eqref{eqnew0001} for $|\beta_{i,j}|$ all sufficiently small.  This result suggests that the ground states of \eqref{eqnew0001}, if they exist, are non-radially symmetric in the total-mixed case~$(d)$, at least for $|\beta_{i,j}|$ all sufficiently small and $|\beta_{2,3}|>>|\beta_{1,2}|,|\beta_{1,3}|$.  By our above discussions, in the total-mixed case~$(d)$, the major task, in studying the existence of the ground states of \eqref{eqnew0001}, is to measure the {\bf total interaction} between the attractively two-coupled system and the single equation, near the least energy value $\mathcal{C}_{\mathcal{N}}$.  It turns out in this case the total interaction  can mainly be controlled by the linear term $\lambda_j$.

\medskip

The following theorem gives complete characterization of the existence and nonexistence of ground states of \eqref{eqnew0001}.

\begin{theorem}\label{coro0001}
Let $N=1,2,3$.
\begin{enumerate}
\item[$(1)$]  In the purely attractive case~$(a)$, there exist $0<\beta_0<\widehat{\beta}_0$ such that
    \begin{enumerate}
    \item[$(i)$] \eqref{eqnew0001} has a ground state with the Morse index 3 for $0<\beta_{1,2},\beta_{1,3},\beta_{2,3}<\beta_0$;
    \item[$(ii)$]  \eqref{eqnew0001} has a ground state with the Morse index 2 for $\beta_{1,2}>\widehat{\beta}_0$, $0<\beta_{1,3},\beta_{2,3}<\beta_0$.
    \item[$(iii)$]  \eqref{eqnew0001} has a ground state with the Morse index 1 for $\beta_{i,j}>\widehat{\beta}_0$ and $|\beta_{i,j}-\beta_{i,l}|<<1$ with all $i,j,l=1,2,3$, $i\not=j$, $i\not=l$ and $j\not=l$, provided that $|\lambda_i-\lambda_j|<<1$ for all $i,j=1,2,3$ with $i\not=j$.
    \end{enumerate}

\item[$(2)$]  In the purely repulsive case~$(b)$ and in the repulsive-mixed case~$(c)$, $\mathcal{C}_{\mathcal{N}}$ can not be attained, provided that the coefficient matrix $\Theta= (\beta_{i,j})$ is positively definite.   That is, system~\eqref{eqnew0001} have no ground states.

\item[$(3)$] In the total-mixed case~$(d)$, if $\lambda_1<\min\{\lambda_2,\lambda_3\}$, then there exist $0<\beta_0<\widehat{\beta}_0$ such that
    \begin{enumerate}
    \item[$(i)$] \eqref{eqnew0001} has a ground state with the Morse index 3 for $0<\beta_{1,2}<\beta_0$, $0<\beta_{1,3}<\beta_0$ and $\beta_{2,3}<0$;
    \item[$(ii)$]  \eqref{eqnew0001} has a ground state with the Morse index 2 for $\beta_{1,2}>\widehat{\beta}_0$, $0<\beta_{1,3}<\beta_0$ and $\beta_{2,3}<0$.
    \end{enumerate}

\item[$(4)$] In the total-mixed case~$(d)$,   let $\beta_{1,2}=\delta\widehat{\beta}_{1,2}$, $\beta_{1,3}=\delta^t\widehat{\beta}_{1,3}$ and $\beta_{2,3}=-\delta^s\widehat{\beta}_{2,3}$, where $\delta>0$ is a parameter and $t,s,\widehat{\beta}_{i,j}$ are absolutely positive constants.  If $\lambda_1\geq\min\{\lambda_2,\lambda_3\}$ and $0<s<\min\{1,t\}$, then for $\delta$ sufficiently small, $\mathcal{C}_{\mathcal{N}}$ can not be attained. That is, \eqref{eqnew0001} has no ground states.

\end{enumerate}
\end{theorem}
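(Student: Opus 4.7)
The overarching plan is to apply the \textbf{block decomposition} principle announced in the introduction: components $u_i,u_j$ belong to the same ``block'' when $\beta_{i,j}>0$ is large enough to force synchronization of their profiles, while weak or negative $\beta_{i,j}$ places them in distinct blocks. Then $\mathcal{C}_{\mathcal{N}}$ is compared with the sum of scalar ground-state energies attached to each block, and the number of blocks is exactly the Morse index (one independent scaling direction on $\mathcal{N}$ per block). Compactness of minimizing sequences will be obtained by ruling out vanishing (trivially) and dichotomy (via a strict inequality $\mathcal{C}_{\mathcal{N}}<\sum_{\text{blocks}} c_{\text{block}}$); existence is then automatic and Morse-index counts follow by diagonalizing the constrained Hessian along the block-scaling directions.

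For part $(1)(i)$ I would perturb around $\beta_{i,j}=0$, where the minimizer is the product $(w_{\lambda_1,\mu_1},w_{\lambda_2,\mu_2},w_{\lambda_3,\mu_3})$ based at a common point and $\mathcal{N}$ is locally a smooth product of three scaling manifolds; the implicit function theorem perturbs this non-degenerate critical point to $|\beta_{i,j}|\leq\beta_0$, giving Morse index $3$. Cases $(ii)$ and $(iii)$ use the \textbf{synchronization ansatz} $u_j=s_j w$ on the attractively-coupled block, reducing it to a scalar equation whose solvability is standard from \cite{BW06,CTV05} once $\beta_{1,2}>\widehat{\beta}_0$ (or all $\beta_{i,j}>\widehat{\beta}_0$ with $|\lambda_i-\lambda_j|\ll 1$ in case $(iii)$); the residual blocks are then treated by the small-$\beta$ perturbation, giving Morse indices $2$ and $1$. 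Part $(3)$ follows the same template with one crucial input: the hypothesis $\lambda_1<\min\{\lambda_2,\lambda_3\}$ yields $c_{\lambda_1,\mu_1}<\min_{j\neq 1} c_{\lambda_j,\mu_j}$, so a test configuration that superposes small positive bumps of $u_2,u_3$ inside the support of $u_1$ satisfies $\mathcal{C}_{\mathcal{N}}<\sum_j c_{\lambda_j,\mu_j}$ at leading order---the attractive terms $\beta_{1,2}\|u_1u_2\|_2^2$ and $\beta_{1,3}\|u_1u_3\|_2^2$ gain linearly in $\|u_2\|,\|u_3\|$ while the repulsive $|\beta_{2,3}|\|u_2u_3\|_2^2$ is quadratic and hence negligible. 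This rules out dichotomy and yields a ground state of Morse index $3$ in $(i)$; case $(ii)$ is reduced to $(i)$ by first synchronizing $(u_1,u_2)$.

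For nonexistence in part $(2)$, I would prove $\mathcal{C}_{\mathcal{N}}=\sum_{j=1}^3 c_{\lambda_j,\mu_j}$ and then show the value is not attained. The upper bound uses the test family $w_{\lambda_j,\mu_j}(\cdot-R_n e_j)$ with $R_n\to\infty$, rescaled back to $\mathcal{N}$; exponential decay together with the repulsive or mixed signs makes all cross terms vanish in the limit. The lower bound is the nontrivial step: positive definiteness of $\Theta=(\beta_{i,j})$ controls the full quartic form $\sum_{i,j}\beta_{i,j}\|u_iu_j\|_2^2$ from below by its diagonal, which via the Nehari identity collapses to three decoupled scalar Nehari inequalities summed. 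A saturating $\overrightarrow{v}$ would then force every $v_j$ to coincide with a translated scalar ground state $w_{\lambda_j,\mu_j}$, and the strict positivity of $\|v_iv_j\|_2^2$ together with some $\beta_{i,j}<0$ would lower the energy below the bound---contradiction.

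Part $(4)$ is the quantitative mirror of $(3)$: when $\lambda_1\geq\min\{\lambda_2,\lambda_3\}$ the overlap gain from part $(3)$ vanishes, and under the scaling $\beta_{1,2}=\delta\widehat{\beta}_{1,2}$, $\beta_{1,3}=\delta^t\widehat{\beta}_{1,3}$, $\beta_{2,3}=-\delta^s\widehat{\beta}_{2,3}$ with $0<s<\min\{1,t\}$ the repulsive coupling dominates all attractive couplings at leading order in $\delta$, so pushing $u_3$ far away strictly beats every compact candidate and $\mathcal{C}_{\mathcal{N}}$ is unattained. The main obstacle throughout is precisely the dichotomy step in concentration-compactness in the total-mixed case: the sign-indefinite interaction makes the ``overlap gain vs.\ separation loss'' extremely sensitive to both $\lambda_j$ and the magnitudes of $\beta_{i,j}$, and parts $(3)$--$(4)$ delineate the sharp threshold where the gain switches sign, which requires a careful asymptotic expansion of the Nehari manifold energy and in $(4)$ a matching lower bound for every configuration in $\mathcal{N}$.
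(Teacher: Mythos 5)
Your skeleton (block decomposition, strict energy inequalities to rule out dichotomy, Morse index counted by the number of Nehari constraints) matches the paper's strategy, but the mechanism you propose for the crucial strict inequality in part $(3)$ is not the right one and would fail. On the Nehari manifold the components cannot be taken as ``small positive bumps'': the constraints $\mathcal{G}_j(\overrightarrow{u})=0$ force $\|u_j\|_4^4\gtrsim 1$ for every $j$, and all interaction terms $\|u_iu_j\|_2^2$ are quadratic in each component, so there is no linear-versus-quadratic hierarchy between the attractive and repulsive couplings. Moreover the inequality you aim at, $\mathcal{C}_{\mathcal{N}}<\sum_j\mathcal{E}_j(w_j)$, only excludes splitting into three singletons; to rule out dichotomy one needs the finer bounds $\mathcal{C}_{\mathcal{N}}<\mathcal{C}_{\mathcal{N}_{1,2}}+\mathcal{E}_3(w_3)$ and $\mathcal{C}_{\mathcal{N}}<\mathcal{C}_{\mathcal{N}_{1,3}}+\mathcal{E}_2(w_2)$, and a fully overlapping configuration cannot give these because the repulsive term $\beta_{2,3}\|u_2u_3\|_2^2$ is then of order one. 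The paper's actual mechanism (Lemmas \ref{lemn0010} and \ref{lem0001}) is a \emph{separation} ansatz: test with $(t_1(R)\varphi^{1,2}_1,t_2(R)\varphi^{1,2}_2,t_3(R)w_3(\cdot-Re_1))$ and compare exponential decay rates, the attractive gain being of order $R^{1-N}e^{-2\sqrt{\lambda_1}R}$ and the repulsive loss of order $e^{-2\min\{\sqrt{\lambda_2},\sqrt{\lambda_3}\}R}$; the hypothesis $\lambda_1<\min\{\lambda_2,\lambda_3\}$ enters precisely to make the first dominate as $R\to+\infty$, not through any ordering of the scalar energies $c_{\lambda_j,\mu_j}$ (which in any case depends on the $\mu_j$ as well).

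Three further points. In part $(2)$ your claimed identity $\mathcal{C}_{\mathcal{N}}=\sum_j\mathcal{E}_j(w_j)$ is false in the repulsive-mixed case $(c)$, where $\beta_{1,2}>0$ gives $\mathcal{C}_{\mathcal{N}}\leq\mathcal{C}_{\mathcal{N}_{1,2}}+\mathcal{E}_3(w_3)<\sum_j\mathcal{E}_j(w_j)$; the correct unattained level is $\mathcal{C}_{\mathcal{N}_{1,2}}+\mathcal{E}_3(w_3)$, and the paper simply quotes \cite{LW05,ST16} here. In part $(1)(iii)$ the synchronization ansatz $u_j=s_jw$ does not apply for large mixed $\beta_{i,j}$ with merely close (not equal) $\lambda_j$, and even when it produces a solution it does not identify the ground state; the paper instead passes to the $\beta_{i,j}\to+\infty$ limit system (Lemma \ref{lem0009}), defines the eigenvalue-type quantities $\widehat{\rho}_{ij,l}$, and verifies $\mathcal{C}_{\mathcal{M}}<\min\mathcal{C}_{\mathcal{M}_{i,j}}$ under the condition \eqref{eqn0079} --- this quantitative step is absent from your sketch. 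Finally, in part $(4)$ you correctly identify that a matching lower bound over all of $\mathcal{N}$ is the obstacle, but you do not supply it; the paper's route is indirect: assume a ground state exists, prove its components converge to translates of $w_j$ with maximum points splitting into exactly two clusters (Lemma \ref{lem0002}), deduce from the upper bound $\mathcal{C}_{\mathcal{N}}\leq\mathcal{C}_{\mathcal{N}_{1,2}}+\mathcal{E}_3(w_3)$ that the total inter-cluster force $\beta_{1,3}\|u_{1,\delta}u_{3,\delta}\|_2^2+\beta_{2,3}\|u_{2,\delta}u_{3,\delta}\|_2^2$ must be nonnegative, and contradict this by showing it is negative using the decay-rate comparison under $\lambda_1\geq\min\{\lambda_2,\lambda_3\}$ and $s<\min\{1,t\}$. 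Without this sign argument your part $(4)$ remains heuristic.
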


\begin{remark}\label{rmk0003}
\begin{enumerate}

\item[$(a)$] (4) of Theorem \ref{coro0001} shows that the ground state in Corollary 1 of \cite{LW05} does not exist.
\item[$(b)$]  As we pointed out above, the major difficulty in proving the existence part of Theorem~\ref{coro0001} is to measure the interaction terms
\begin{eqnarray}
\beta_{1,3}\|u_1u_3\|_2^2+\beta_{2,3}\|u_2u_3\|_2^2\quad\text{and}\quad\beta_{1,2}\|u_1u_2\|_2^2+\beta_{2,3}\|u_2u_3\|_2^2
\end{eqnarray}
by non-radially symmetric vector-functions.
By using the ground states of the system of $(u_1, u_2)$ and the single equation of $u_3$ (or the pair of $(u_1, u_3)$ and $u_2$) as test functions we find that the above interaction terms behave like:
\begin{eqnarray*}
\mathfrak{H}&=&\sup_{R>>1}(C\beta_{1,3}R^{1-N+\gamma}e^{-2\min\{\sqrt{\lambda_1}, \sqrt{\lambda_3}\}R}\\
&&+C'\beta_{2,3}R^{1-N+\gamma'}e^{-2\min\{\sqrt{\lambda_2}, \sqrt{\lambda_3}\}R})
\end{eqnarray*}
and
\begin{eqnarray*}
\mathfrak{G}&=&\sup_{R>>1}(C\beta_{1,2}R^{1-N+\gamma''}e^{-2\min\{\sqrt{\lambda_1}, \sqrt{\lambda_2}\}R}\\
&&+C'\beta_{2,3}R^{1-N+\gamma'}e^{-2\min\{\sqrt{\lambda_2}, \sqrt{\lambda_3}\}R}),
\end{eqnarray*}
where $\gamma,\gamma',\gamma''$ are positive constants depending only on $N$ and the relation of $\lambda_j$, and $C,C'$ are positive constants (depending on the ground states of small system $(u_1, u_2)$ or $(u_1, u_3)$).  Moreover, roughly speaking, if $\min\{\mathfrak{H}, \mathfrak{G}\}>0$ then the interaction between the system of $(u_1, u_2)$ and the single equation of $u_3$ (or the pair of $(u_1, u_3)$ and $u_2$) is ``attractive'' and consequently the ground states exist; while if $\min\{\mathfrak{H}, \mathfrak{G}\}<0$ then the interaction between the system of $(u_1, u_2)$ and the single equation of $u_3$ (or the pair of $(u_1, u_3)$ and $u_2$) is ``repulsive'' and consequently the ground states do not exist.  Based on this observation, if we further assume that $0<-\beta_{2,3}<<\min\{\beta_{1,2}, \beta_{1,3}\}$ in the case $\lambda_1=\min\{\lambda_2,\lambda_3\}$, then the ground states of \eqref{eqnew0001} still exists.  Thus, Theorem \ref{coro0001} gives an almost complete result about the existence and nonexistence of ground states of \eqref{eqnew0001}.
\item[$(c)$] The existence of the ground states of \eqref{eqnew0001} with the Morse index $3$ for the purely attractive case and the nonexistence of the ground states of \eqref{eqnew0001} for the repulsive-mixed case is actually proved in
    \cite[Corollary~1.3 and Theorem~1.6]{ST16}, respectively.  We list them in Theorem~\ref{coro0001} for the sake of completeness.  The existence of ground states of \eqref{eqnew0001} with the Morse index $1$ for the purely attractive case is proved in \cite[Theorem~2.1]{LW10}.  Here, our provide a different proof of this result.
\end{enumerate}
\end{remark}

\medskip

As we stated above, in proving Theorem~\ref{coro0001}, our major idea is to regard the three-coupled system~\eqref{eqnew0001} as an attractively two-coupled system  coupled with a single equation, and to precisely measure the interaction between them.  To extend the  above idea to the general $k$-coupled system~\eqref{eqn0001} for $k\geq 4$, we need to further decompose the $k$-coupled system~\eqref{eqn0001}, which is based on the following concepts of {\bf optimal block decomposition} and {\bf  eventual block decomposition}.  These definitions for the general $k$-component cases are tedious and lengthy, which we would like to state at the next section and only introduce the key steps here:  first we group all attractive components $\mu_j$ together into blocks of sub-matrices so that inside each block the interactions between components are all attractive.  The decomposition is called {\bf optimal} if the number of blocks needed is the least, and the number of the blocks is called the {\bf degree} of this {\bf optimal block decomposition} and is denoted by $d$.  In the second step we need to group different "attractive" blocks together to form larger blocks. To see if two blocks are attractive or repulsive, we need to define quantities, named {\bf interaction forces}, which measure the interaction between different blocks in an optimal block decomposition.  Roughly speaking, if the quantity is positive then the interaction between of corresponding blocks is ``attractive'', while if this quantity is negative then the interaction between of these two blocks is ``repulsive''.  We now  group all possible ``attractive'' blocks together into bigger blocks of sub-matrices so that inside each bigger block the forces between blocks are all ``attractive''.  We repeat these steps  until we can not group them in this way anymore.  Then the remaining  matrix, consisting of ``largest'' attractive blocks, is called an {\bf eventual block decomposition}, and the number of the ``largest'' blocks is called the {\bf  degree} of an eventual block  decomposition and is denoted by $m$. More precise definitions can be found at the next section. Let us test these ideas with the first nontrivial case $k=4$:

\begin{equation}\label{eqnewnew0001}
\left\{\aligned&-\Delta u_1+\lambda_1u_1=\mu_1u_1^3+\beta_{1,2} u_2^2u_1+\beta_{1,3}u_3^2u_1+\beta_{1,4}u_4^2u_1\quad\text{in }\bbr^N,\\
&-\Delta u_2+\lambda_2u_2=\mu_2u_2^3+\beta_{1,2} u_1^2u_2+\beta_{2,3}u_3^2u_2+\beta_{2,4}u_4^2u_2\quad\text{in }\bbr^N,\\
&-\Delta u_3+\lambda_3u_3=\mu_3u_3^3+\beta_{1,3} u_1^2u_3+\beta_{2,3}u_2^2u_3+\beta_{3,4}u_4^2u_3\quad\text{in }\bbr^N,\\
&-\Delta u_4+\lambda_4u_4=\mu_4u_4^3+\beta_{1,4} u_1^2u_4+\beta_{2,4}u_2^2u_4+\beta_{3,4}u_3^2u_4\quad\text{in }\bbr^N,\\
&u_i>0\quad\text{in }\bbr^N,\quad u_i(x)\to0\quad\text{as }|x|\to+\infty,\quad i=1,2,3,4.\endaligned\right.
\end{equation}
We assume that the coefficients satisfy
\begin{equation}
\label{Hdef}
(H)\ \ \ \ \ \ \ \beta_{1,2}>0, \beta_{1,3}>0, \beta_{1,4}<0, \beta_{2,3}<0, \beta_{2,4}>0, \beta_{3,4}<0.
\end{equation}
  Clearly, an optimal block decomposition in this case can be given by
\begin{eqnarray}\label{eqnewnew0007}
\label{A1}
\mathbf{A}_1=\left(\aligned \left(\aligned &\mu_1\quad \beta_{1,2}\\
&\beta_{1,2}\quad \mu_2 \endaligned\right)
\ \ \left(\aligned&\beta_{1,3}\\
&\beta_{2,3}\endaligned\right)\ \ \left(\aligned&\beta_{1,4}\\
&\beta_{2,4}\endaligned\right)\\
\left(\beta_{1,3}\quad \beta_{2,3}\right)\quad\quad\mu_3 \quad\quad\quad\beta_{3,4}\\
\left(\beta_{1,4}\quad \beta_{2,4}\right)
\quad\quad\beta_{3,4} \quad \quad\quad\mu_4\endaligned\right)
\end{eqnarray}
with  degree $d=3$.  To obtain eventual block  decomposition, we need to first define the interaction forces.  To do this, we rewrite $\mathbf{A}_1$ as follows:
\begin{eqnarray}
\mathbf{A}_1=\left(\aligned B_{1,1}\quad B_{1,2}\quad B_{1,3}\\
B_{1,2}\quad B_{2,2}\quad B_{2,3}\\
B_{1,3}\quad B_{2,3}\quad B_{3,3}\endaligned\right).
\end{eqnarray}
Here $B_{i,j}$ are given in (\ref{A1}). For example $B_{2,2}=\mu_3, B_{3,3}= \mu_4$.
Since the ground states in $B_{1,1}$ exist for some ranges of $\beta_{1,2}$ and the ground states in $B_{2,2}$ and $B_{3,3}$ also exist, and they  all have  exponentially decaying at infinity, we may define quantities
\begin{eqnarray}\label{eqnewnew0002}
\mathfrak{F}_{1,2}^0=\sup_{R>>1}&(&C_{1,3}^{1,2}\beta_{1,3}R^{1-N+\gamma_{1,3}}e^{-2\min\{\sqrt{\lambda_1}, \sqrt{\lambda_3}\}R}\notag\\
&&+C_{2,3}^{1,2}\beta_{2,3}R^{1-N+\gamma_{2,3}}e^{-2\min\{\sqrt{\lambda_2}, \sqrt{\lambda_3}\}R}),
\end{eqnarray}
\begin{eqnarray*}
\mathfrak{F}_{1,3}^0=\sup_{R>>1}&(&C_{1,4}^{1,3}\beta_{1,4}R^{1-N+\gamma_{1,4}}e^{-2\min\{\sqrt{\lambda_1}, \sqrt{\lambda_4}\}R}\notag\\
&&+C_{2,4}^{1,3}\beta_{2,4}R^{1-N+\gamma_{2,4}}e^{-2\min\{\sqrt{\lambda_2}, \sqrt{\lambda_4}\}R})
\end{eqnarray*}
and
\begin{eqnarray*}
\mathfrak{F}_{2,3}^0=\sup_{R>>1}(C_{3,4}^{2,3}\beta_{3,4}R^{1-N+\gamma_{3,4}}e^{-2\min\{\sqrt{\lambda_3}, \sqrt{\lambda_4}\}R}),
\end{eqnarray*}
where $\gamma_{i,j}$ are positive constants depending only on $N$ and the relation of $\lambda_j$, and $C_{i,j}^{s,t}$ are positive constants depending only on the ground states in the corresponding blocks.  Since the ground states in blocks with the same least critical value is compact, $C_{i,j}^{s,t}$ is uniformly bounded from below and above.  These quantities $\mathfrak{F}_{i,j}^0$, as $\mathfrak{H}$ and $\mathfrak{G}$, are used to measure the interaction between the blocks $B_{i,i}$ and $B_{j,j}$ from the viewpoint of the concentration-compactness principle. Roughly speaking, the sign of $\mathfrak{F}_{i,j}^0$ determines whether the blocks $B_{i,i}$ and $B_{j,j}$ are ``attractive'' $(\mathfrak{F}_{i,j}^0>0)$ or ``repulsive'' $(\mathfrak{F}_{i,j}^0<0)$.  Note that  $\mathfrak{F}^0_{2,3}<0$. If both $\mathfrak{F}^0_{1,2}<0$ and $\mathfrak{F}^0_{1,3}<0$, then the blocks in $\mathbf{A}_1$ can not be further grouped into ``bigger'' blocks so that inside each bigger block the interaction forces between blocks are all ``attractive''.  Thus, $\mathbf{A}_1$ is also an eventual block decomposition with degree $m=3$.  If either $\mathfrak{F}_{1,2}^0>0$ or $\mathfrak{F}_{1,3}^0>0$, then roughly speaking, by Theorem~\ref{coro0001} there exists a ground state in the ``bigger'' block:
\begin{eqnarray*}
C_{1,1}=\left(\aligned B_{1,1}\quad B_{1,2}\\
B_{1,2}\quad B_{2,2}\endaligned\right).
\end{eqnarray*}
Here, without loss of generality, we assume $\mathfrak{F}_{1,2}^0>0$ (the other case $\mathfrak{F}_{1,3}^0>0$ is similar).
Thus, we may further group $\mathbf{A}_1$ as follows:
\begin{eqnarray*}
\mathbf{A}_2=\left(\aligned &\left(\aligned B_{1,1}\quad B_{1,2}\\
B_{1,2}\quad B_{2,2}\endaligned\right)\quad\left(\aligned B_{1,3}\\ B_{2,3}\endaligned\right)\\
&\left(\aligned B_{1,3}\quad B_{2,3}\endaligned\right)\quad\quad B_{3,3}\endaligned\right).
\end{eqnarray*}
We rewrite $\mathbf{A}_2$ by
\begin{eqnarray}\label{eqnewnew0008}
\mathbf{A}_2=\left(\aligned C_{1,1}\quad C_{1,2}\\
C_{1,2}\quad C_{2,2}\endaligned\right)
\end{eqnarray}
and define the interaction force between $C_{1,1}$ and $C_{2,2}$ by $\mathfrak{F}^1_{1,2}=\mathfrak{F}^0_{1,3}+\mathfrak{F}^0_{2,3}$, which as $\mathfrak{F}_{i,j}^0$, is used to measure the interaction between the blocks $C_{1,1}$ and $C_{2,2}$, and roughly speaking,
the sign of $\mathfrak{F}_{1,2}^1$ determines whether the blocks $C_{1,1}$ and $C_{2,2}$ are ``attractive'' $(\mathfrak{F}_{1,2}^1>0)$ or ``repulsive'' $(\mathfrak{F}_{1,2}^1<0)$.  If $\mathfrak{F}^1_{1,2}<0$ then the blocks in $\mathbf{A}_2$ can not be further grouped into ``bigger'' blocks so that inside each bigger block the interaction forces between blocks are all ``attractive''.  Thus, $\mathbf{A}_2$ is an eventual block decomposition with degree $m=2$.  If $\mathfrak{F}^1_{1,2}>0$ then we may further group $\mathbf{A}_2$ as a whole element
\begin{eqnarray*}
\mathbf{A}_3=\left(\aligned \left[\aligned C_{1,1}\quad C_{1,2}\\
C_{1,2}\quad C_{2,2}\endaligned\right]\endaligned\right).
\end{eqnarray*}
Since $\mathbf{A}_3$ only has one block, we can not further group it into a ``bigger'' block.  Therefore, $\mathbf{A}_3$ is an eventual block  decomposition with degree $m=1$.  There are another optimal block decomposition with the blocks $(u_1,u_3)$, $u_2$ and $u_4$.  One can use the same method to obtain its eventual block  decompositions and count their degrees.  Since the defined interaction forces almost determine whether the corresponding blocks are ``attractive'' or ``repulsive'', roughly speaking, the degrees of eventual block decompositions determine the number of groups of the components $u_j$ that ``stay together''.  Therefore, the ground states of \eqref{eqnewnew0001} are expected to exist if and only if the degrees of all eventual block  decompositions equal to $1$.  Now, our results for \eqref{eqnewnew0001} in the case $(H)$ can be stated as follows.
\begin{theorem}\label{thm0003}
Let $N=1,2,3$.  Then in the case $(H)$ at (\ref{Hdef}),
\begin{enumerate}
\item[$(1)$]  if $\lambda_1=\lambda_2<\min\{\lambda_3,\lambda_4\}$ and $0<-\beta_{2,3},-\beta_{1,4},-\beta_{3,4}<<\beta_{1,2},\beta_{2,4},\beta_{1,3}$ then there exist $\widehat{\beta}_0>\beta_0>0$ such that
    \begin{enumerate}
    \item[$(i)$]  if $\beta_{1,2},\beta_{1,3}<\beta_0$ then \eqref{eqnewnew0001} has a ground state with the Morse index $4$,
    \item[$(ii)$]  if $\beta_{1,3}<\beta_0$ and $\beta_{1,2}>\widehat{\beta}_0$ then \eqref{eqnewnew0001} has a ground state with the Morse index $3$.
    \end{enumerate}
\item[$(2)$]  Assume $\beta_{1,2}=\delta^{t_{1,2}}\widehat{\beta}_{1,2}$, $\beta_{1,3}=\delta^{t_{1,3}}\widehat{\beta}_{1,3}$, $\beta_{2,3}=-\delta^{t_{2,3}}\widehat{\beta}_{2,3}$, $\beta_{1,4}=-\delta^{t_{1,4}}\widehat{\beta}_{1,4}$, $\beta_{2,4}=\delta^{t_{2,4}}\widehat{\beta}_{2,4}$ and $\beta_{3,4}=-\delta^{t_{3,4}}\widehat{\beta}_{3,4}$, where $t_{i,j}$ and $\widehat{\beta}_{i,j}$ are all positively absolute constants and $\delta>0$ is a small parameter.  If $\min\{\lambda_3,\lambda_4\}<\min\{\lambda_1,\lambda_2\}$ and $\max\{t_{2,3},t_{1,4}, t_{3,4}\}<t_{1,2}<\min\{t_{1,3}, t_{2,4}\}$, then \eqref{eqnewnew0001} has no ground states for $\delta>0$ sufficiently small.
\end{enumerate}
\end{theorem}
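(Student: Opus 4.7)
The plan is to base both parts on a careful bookkeeping of the interaction forces attached to the optimal block decomposition $\mathbf{A}_1$ displayed in \eqref{eqnewnew0007} (and its variant with the attractive pair $(u_1,u_3)$ in place of $(u_1,u_2)$), and then to translate the resulting eventual block decomposition degree into attainment versus non-attainment of $\mathcal{C}_{\mathcal{N}}$.

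For Part~(1) I start from $\mathbf{A}_1$, whose three blocks are $(u_1,u_2)$, $u_3$ and $u_4$. Under the assumption $\lambda_1=\lambda_2<\min\{\lambda_3,\lambda_4\}$, every positive contribution in $\mathfrak{F}^0_{1,2}$, $\mathfrak{F}^0_{1,3}$, $\mathfrak{F}^0_{2,3}$ involves $\sqrt{\lambda_1}$ or $\sqrt{\lambda_2}$ in its exponent, while every negative contribution either shares the same exponent but carries a coefficient of size $O(|\beta_{2,3}|+|\beta_{1,4}|+|\beta_{3,4}|)$ (hence much smaller than the positive ones by hypothesis), or comes with a strictly larger exponent and is exponentially suppressed for $R\gg 1$. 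A direct sup-over-$R$ comparison then yields $\mathfrak{F}^0_{1,2}>0$, so the blocks $(u_1,u_2)$ and $u_3$ can be merged into $C_{1,1}$, and $\mathfrak{F}^1_{1,2}=\mathfrak{F}^0_{1,3}+\mathfrak{F}^0_{2,3}>0$ by the same reasoning (the surviving attractive contribution coming from $\beta_{2,4}>0$ dominates $-\beta_{1,4},-\beta_{3,4}$). Hence the eventual block decomposition collapses to $\mathbf{A}_3$ of degree $m=1$, and the variational construction parallel to the one behind Theorem~\ref{coro0001}(1) and (3) produces a ground state $\overrightarrow{v}$. For the Morse index I would linearize $\mathcal{E}$ at $\overrightarrow{v}$: in regime~(i) the smallness of $\beta_{1,2},\beta_{1,3}$ makes $\overrightarrow{v}$ a small perturbation of the product of the four scalar ground states, so the Hessian is a compact perturbation of a block-diagonal operator whose diagonal blocks each contribute exactly one negative direction, giving Morse index~$4$; in regime~(ii) the strongly attractive sub-block $(u_1,u_2)$ synchronizes to the form $(au,bu)$ with $u>0$ solving a scalar NLS, contributing only one negative direction, while the two singletons contribute one each, giving Morse index~$3$ after a standard spectral perturbation argument that absorbs the cross-couplings.

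For Part~(2) I would establish the two-sided asymptotic
\begin{equation*}
\mathcal{C}_{\mathcal{N}} \;=\; \sum_{j=1}^{4} c_{j}^{0}+o_\delta(1),
\end{equation*}
where $c_j^0$ is the scalar Nehari energy of $-\Delta u+\lambda_j u=\mu_j u^3$. The upper bound uses the four scalar ground states placed along the $x_1$-axis at mutual distances $R\to\infty$ as a test family, with $R=R(\delta)$ optimized. The lower bound rests on a profile-decomposition argument applied to a minimizing sequence on $\mathcal{N}$: each profile corresponds to a subset of indices, and the energy decomposes into the scalar contributions plus cross-terms controlled by the interaction forces. Under the prescribed exponent ordering $\max\{t_{2,3},t_{1,4},t_{3,4}\}<t_{1,2}<\min\{t_{1,3},t_{2,4}\}$ and $\min\{\lambda_3,\lambda_4\}<\min\{\lambda_1,\lambda_2\}$, every interaction force is strictly negative for $\delta$ small (the negative $\beta$'s are $\delta^{t_{1,2}-t_\bullet}$ larger than the positive ones, while the additional exponential suppression of the $\lambda_3,\lambda_4$-components cannot be compensated by the $\lambda_1,\lambda_2$-components), so no partial binding of components is energetically favorable; every eventual block decomposition has degree $m=4$, and $\mathcal{C}_{\mathcal{N}}$ cannot be attained. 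The main technical obstacle is precisely this lower bound: one must rule out \emph{partially bound} minimizers, e.g.\ $(u_1,u_2)$ remaining together while $u_3,u_4$ escape, by a uniform-in-$\delta$ comparison of the corresponding trial energies with $\sum_j c_j^0$, which requires sharpening the asymptotic expansions behind $\mathfrak{F}^0_{i,j}$ and carefully matching the $R^{1-N+\gamma_{i,j}}$ polynomial prefactors against the $\delta^{t_{i,j}}$ scalings of the couplings.
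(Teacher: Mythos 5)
Your Part (1) follows essentially the paper's route for existence (positivity of the interaction forces $\mathfrak{F}^0_{1,2}$, $\mathfrak{F}^0_{1,3}$ and of $\mathfrak{F}^1_{1,2}$ under $\lambda_1=\lambda_2<\min\{\lambda_3,\lambda_4\}$, hence eventual degree $m=1$, then a splitting/compactness argument). For the Morse index the paper does not linearize perturbatively: it reads the index off the codimension of the constraint manifold ($\mathcal{N}_{1,2,3,4}$ with four constraints for index $4$, $\mathcal{M}_{12,3,4}=\{\mathcal{G}_1+\mathcal{G}_2=\mathcal{G}_3=\mathcal{G}_4=0\}$ with three constraints for index $3$), the complementary directions $(v_1,0,0,0),\dots$ resp.\ $(v_1,v_2,0,0),(0,0,v_3,0),(0,0,0,v_4)$ giving strictly negative second derivatives by the Nehari identities. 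Your spectral-perturbation count is plausible in regime (i) but your regime (ii) argument via synchronization of $(u_1,u_2)$ to $(au,bu)$ is only a heuristic and would need the known fact that the two-component ground state for large $\beta_{1,2}$ has Morse index $1$; the constraint-codimension argument avoids this entirely.

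Part (2) contains a genuine gap: your strategy is to show that the leading asymptotic is $\mathcal{C}_{\mathcal{N}}=\sum_j\mathcal{E}_j(w_j)+o_\delta(1)$ and to \emph{rule out} partially bound minimizers such as $(u_1,u_2)$ staying together. This is backwards, and the step would fail. Since $t_{1,2}<\min\{t_{1,3},t_{2,4}\}$ and $\max\{t_{2,3},t_{1,4},t_{3,4}\}<t_{1,2}$, the two-component ground state of the $(1,2)$ sub-system exists and satisfies $\mathcal{C}_{\mathcal{N}_{1,2}}\leq\mathcal{E}_1(w_1)+\mathcal{E}_2(w_2)-C\delta^{t_{1,2}}$, so the partially bound trial configuration has energy strictly below $\sum_j\mathcal{E}_j(w_j)$ and cannot be excluded by any comparison at the $o_\delta(1)$ level; moreover the optimal block decomposition already contains the attractive block $(u_1,u_2)$, so the eventual degree here is $m=3$, not $m=4$, and it is false that ``no partial binding is energetically favorable.'' The paper's argument runs the other way: assuming a ground state $\overrightarrow{u}_\delta$ exists, it combines the upper bound \eqref{eqnewnew0006} with the lower bound \eqref{eqnewnew0005} to conclude first that $(u_1,u_2)$ \emph{must} bind ($|y_{1,\delta}-y_{2,\delta}|\to 0$) while $u_3,u_4$ escape, and that the total inter-block interaction
\begin{equation*}
\beta_{1,3}\|u_1^\delta u_3^\delta\|_2^2+\beta_{2,3}\|u_2^\delta u_3^\delta\|_2^2+\beta_{1,4}\|u_1^\delta u_4^\delta\|_2^2+\beta_{2,4}\|u_2^\delta u_4^\delta\|_2^2+\beta_{3,4}\|u_3^\delta u_4^\delta\|_2^2
\end{equation*}
must be nonnegative; only then do the sharp decay asymptotics of Lemma~\ref{lemn0010}, together with $\min\{\lambda_3,\lambda_4\}<\min\{\lambda_1,\lambda_2\}$ and $\max\{t_{2,3},t_{1,4},t_{3,4}\}<t_{1,2}<\min\{t_{1,3},t_{2,4}\}$, force this quantity to be strictly negative, yielding the contradiction. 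The contradiction lives entirely in the exponentially small residual interactions of the partially bound configuration, which your leading-order energy expansion cannot see.
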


\begin{remark}
As in Theorem~\ref{coro0001}, the assumptions $\lambda_1=\lambda_2<\min\{\lambda_3,\lambda_4\}$ and $0<-\beta_{2,3},-\beta_{1,4},-\beta_{3,4}<<\beta_{1,2},\beta_{2,4},\beta_{1,3}$ are used to grantee all eventual block  decompositions have the degree $m=1$, and it can be slightly generalized as that in $(b)$ of Remark~\ref{rmk0003}.
\end{remark}

\medskip

For other cases of the couplings of the four-coupled system~\eqref{eqnewnew0001} or for the general $k$-coupled system~\eqref{eqn0001}, the strategy is the same.  However, to state our results for the general $k$-coupled system~\eqref{eqn0001}, we need to rigorously define optimal block  decompositions and eventual block   decompositions.

\section{Block Decompositions and Statements of Main Results in the General Case}

Let us first define optimal block decompositions.  Let $d=1,2,\cdots,k$, $0=a_0<a_1<\cdots<a_{d-1}<a_d=k$ and
\begin{eqnarray}\label{eqn0098}
\mathcal{K}_{t,s,\mathbf{a}_d}=(a_{t-1}, a_{t}]_{\bbn}\times(a_{s-1}, a_{s}]_{\bbn},
\end{eqnarray}
where $\mathbf{a}_d=(a_0,a_1,\cdots,a_d)$, $t,s=1,2,\cdots,d$ and $(a_{t-1}, a_{t}]_{\bbn}=(a_{t-1}, a_{t}]\cap\bbn$.  Then,
\begin{eqnarray*}
\mathbf{A}_d=([\beta_{i,j}]_{(i,j)\in\mathcal{K}_{t,s,\mathbf{a}_d}})_{t,s=1,2,\cdots,d}
\end{eqnarray*}
is called a {\bf $d$-decomposition} of the coefficient matrix $\Theta=(\beta_{i,j})$.  Moreover, $\mathbf{A}_d$ is called repulsive if the couplings $\beta_{i,j}$ are all negative, $\mathbf{A}_d$ is called attractive if the couplings $\beta_{i,j}$ are all positive and $\mathbf{A}_d$ is called mixed if the couplings $\beta_{i,j}$ are mixed.  In $\mathbf{A}_d$, $\Theta_{t,s}=[\beta_{i,j}]_{(i,j)\in\mathcal{K}_{t,s,\mathbf{a}_d}}$ is called the $(t,s)$ block of $\mathbf{A}_d$.  Moreover, if $\{i\not=j,(i,j)\in\mathcal{K}_{s,s,\mathbf{a}_d}\}\not=\emptyset$, then all couplings~$\beta_{i,j}$ with $i\not=j$ in the $(s,s)$ block $\Theta_{s,s}$ are called the $s_{th}$ inner-couplings, while the couplings~$\beta_{i,j}$ in all $(s,t)$ blocks $\Theta_{s,t}$ with $s\not=t$ are called the inter-couplings.

\vskip0.12in

Let $\mathbf{i}=({i_1}, {i_2}, \cdots, {i_k})$ be a permutation of $(1,2,\cdots,k)$.  Then, correspondingly
$$
\Theta_{\mathbf{i}}=[\beta_{i_j,i_l}]_{j,l=1,2,\cdots,k}
$$
is a permutation of $\Theta$.
For the sake of clarity, we denote the corresponding $d$-decomposition of $\Theta_{\mathbf{i}}$ by $\mathbf{A}_{d,\mathbf{i}}$.
For the mixed couplings, there exist $\mathbf{i}=({i_1}, {i_2}, \cdots, {i_k})$, a permutation of $(1,2,\cdots,k)$, and $d=2,3\cdots,k-1$ such that $\Theta_{\mathbf{i}}$ has a mixed $d$-decomposition $\mathbf{A}_{d,\mathbf{i}}$ with all inner-couplings being positive.
Let $\mathbf{A}_{d,\mathbf{i}}$ be a mixed $d$-decomposition of $\Theta_{\mathbf{i}}$ such that all inner-couplings are positive.
$\mathbf{A}_{d,\mathbf{i}}$ is called an {\bf optimally mixed block decomposition of $\Theta$ to the permutation $\mathbf{i}$}, if for any $n<d$ and any $n$-decomposition of $\Theta_{\mathbf{i}}$, there exists at least one negative inner-coupling.
By our definitions, an optimally mixed block decomposition of $\Theta$ to the permutation $\mathbf{i}$, say $\mathbf{A}_{d,\mathbf{i}}$, is the one that, the number of the $(s,s)$ blocks of $\mathbf{A}_{d,\mathbf{i}}$ is the smallest in all decompositions of $\Theta_{\mathbf{i}}$, whose inner-couplings are all positive.  Clearly, for a given permutation $\mathbf{i}$, any optimally mixed block decomposition of $\Theta_{\mathbf{i}}$ to this fixed permutation has the same number of the $(s,s)$ blocks, which is called the degree of optimally mixed block decompositions of $\Theta$ to the permutation $\mathbf{i}$ and is denoted by $d_{\mathbf{i}}$.  Let
\begin{eqnarray*}
\mathfrak{A}_{\mathbf{i}}=\{\mathbf{A}_{d,\mathbf{i}}\mid\text{all inner-couplings of $\mathbf{A}_{d,\mathbf{i}}$ are positive and }d=d_{\mathbf{i}}\}.
\end{eqnarray*}
Then, $\mathbf{A}_{d,\mathbf{i}}$ is an optimally mixed block decomposition of $\Theta$ to the permutation $\mathbf{i}$ if and only if $\mathbf{A}_{d,\mathbf{i}}\in\mathfrak{A}_{\mathbf{i}}$.
Let
\begin{eqnarray*}
\mathfrak{d}=\min\{d_{\mathbf{i}}\mid \mathbf{i}\text{ is a permutation of }(1,2,\cdots,k)\}
\end{eqnarray*}
and
\begin{eqnarray*}
\mathfrak{S}=\{\mathbf{j}\mid \mathbf{j}\text{ is a permutation of }(1,2,\cdots,k)\text{ and }d_{\mathbf{j}}=\mathfrak{d}\}.
\end{eqnarray*}
Then, $\mathfrak{S}\not=\emptyset$.  $\mathbf{A}_{d_{\mathbf{j}},\mathbf{j}}$ is called an {\bf optimally mixed block decomposition of $\Theta$} if $\mathbf{j}\in\mathfrak{S}$.
By our definitions, an optimally mixed block decomposition of $\Theta$, say $\mathbf{A}_{d_{\mathbf{j}},\mathbf{j}}$, is the one that, the number of the $(s,s)$ blocks of $\mathbf{A}_{d_{\mathbf{j}},\mathbf{j}}$ is the smallest in all decompositions of $\Theta_{\mathbf{i}}$ for all permutations $\mathbf{i}$, whose inner-couplings are all positive.  Let
\begin{eqnarray*}
\mathfrak{A}=\{\mathbf{A}_{d_{\mathbf{i}},\mathbf{i}}\mid\mathbf{A}_{d_{\mathbf{i}},\mathbf{i}} \text{ is an optimally mixed block decomposition to $\mathbf{i}$ and }d_{\mathbf{i}}=\mathfrak{d}\}.
\end{eqnarray*}
Then, $\mathbf{A}_{d_{\mathbf{i}},\mathbf{i}}$ is an optimally mixed block decomposition of $\Theta$ if and only if $\mathbf{A}_{d_{\mathbf{i}},\mathbf{i}}\in\mathfrak{A}$.  Clearly, the number of $(s,s)$ blocks in every optimally mixed block decomposition is the same, and this number is called the degree of optimally mixed block decompositions of $\Theta$ and is denoted by $d$.  Without loss of generality, in what follows, we always assume that $\mathbf{A}_{d_{\mathbf{o}},\mathbf{o}}\in\mathfrak{A}$, where $\mathbf{o}=(1,2,\cdots,k)$.  For the sake of simplicity, we re-denote $\mathbf{A}_{d_{\mathbf{o}},\mathbf{o}}$ and $d_{\mathbf{o}}$ by $\mathbf{A}_d$ and $d$, respectively.

\medskip

Since all inner-couplings of an optimally mixed block decomposition, say $\mathbf{A}_d$, are {\bf positive}, for the inter-couplings $\{\beta_{i,j}\}$, either
\begin{enumerate}
\item[$(1)$]  there exists an $(s,s)$ block $\Theta_{s,s}$ such that $\beta_{i,j}$ are negative for all $i\in(a_{s-1}, a_{s}]_{\bbn}$ and $j\not\in(a_{s-1}, a_{s}]_{\bbn}$ or
\item[$(2)$]  $\beta_{i,j}$ are still mixed for all $(i,j)\in\mathcal{K}_{s,t,\mathbf{a}_d}$ and all $1\leq s<t\leq d$.
\end{enumerate}
In the case~$(1)$, $\mathbf{A}_d$ is called repulsive-mixed while in the case~$(2)$, $\mathbf{A}_d$ is called total-mixed.  If there exists an optimally mixed block decomposition that is repulsive-mixed then the mixed couplings $\{\beta_{i,j}\}$ are called {\bf repulsive-mixed} while if all optimally mixed block decompositions are total-mixed then the mixed couplings $\{\beta_{i,j}\}$ are called {\bf total-mixed}.

\medskip

From the definitions above,  for  purely attractive couplings,  its optimal  block decomposition  has the degree $d=1$, while for the purely repulsive couplings the degree of its optimal block decomposition is $k$.  Clearly, the optimal block decompositions of the coefficient matrix $\Theta$ for the purely attractive couplings and the purely repulsive couplings, respectively, are unique up to all permutations of $(1,2,\cdots,k)$.  In what follows, for the sake of simplicity, the optimally mixed block decompositions of mixed couplings are also called their optimal block decompositions.   Thus by the definition of optimal block decompositions, the couplings $\{\beta_{i,j}\}$ can be {\bf classified into four classes}: the purely attractive case, the purely repulsive case, the repulsive-mixed case and the total-mixed case.

\medskip

Let us next define eventual block decompositions.  We rewrite $\mathbf{A}_d$ as
\begin{eqnarray*}
\mathbf{A}_d=[\Theta_{t,s}]_{t,s=1,2,\cdots,d}
\end{eqnarray*}
and define the {\bf interaction forces} between $\Theta_{s,s}$ and $\Theta_{t,t}$ as
\begin{eqnarray*}
\mathfrak{F}_{s,t}^0=\sup_{R_{s,t}>>1}\sum_{(i,j)\in\mathcal{K}_{s,t,\mathbf{a}_d};s\not=t}
&\bigg(&\sum_{\lambda_i=\lambda_j}
C_{i,j}^{s,t}\beta_{i,j}(\frac{1}{R_{s,t}})^{N-1-\alpha}e^{-2\sqrt{\lambda_i}R_{s,t}}\notag\\
&&+\sum_{\lambda_i\not=\lambda_j}C_{i,j}^{s,t}\beta_{i,j}(\frac{1}{R_{s,t}})^{N-1}
e^{-2\min\{\sqrt{\lambda_i},\sqrt{\lambda_j}\}R_{s,t}}\bigg),
\end{eqnarray*}
where $\alpha=1$ for $N=1$ and $\alpha=\frac{1}{2}$ for $N=2,3$.  Let
\begin{eqnarray*}
\mathbf{A}_{d^1}^1=[\Theta_{t,s}^1]_{t,s=1,2,\cdots,d^1}
\end{eqnarray*}
be such a decomposition:  $\Theta_{t,s}^1$ are consisted by $\Theta_{i,j}$ such that all interaction forces $\mathfrak{F}_{i,j}^0$ between $\Theta_{i,i}$ and $\Theta_{j,j}$ in $\Theta_{t,s}^1$ are positive.  Without loss of generality, we denote $\Theta_{t,s}^1$ by
\begin{eqnarray*}
\Theta_{t,s}^1=[\Theta_{i,j}]_{(i,j)\in\mathcal{K}_{t,s,\mathbf{a}^1_{d^1}}},
\end{eqnarray*}
Where
\begin{eqnarray*}
\mathcal{K}_{t,s,\mathbf{a}^1_{d^1}}=(a^1_{t-1}, a^1_{t}]_{\bbn}\times(a^1_{s-1}, a^1_{s}]_{\bbn}
\end{eqnarray*}
with $\mathbf{a}^1_{d^1}=(a^1_0,a^1_1,\cdots,a^1_{d^1})$, $(a^1_{t-1}, a^1_{t}]_{\bbn}=(a^1_{t-1}, a^1_{t}]\cap\bbn$ and $0=a^1_0<a^1_1<\cdots<a^1_{d^1-1}<a^1_{d^1}=d$.  We then define the interaction forces between $\Theta_{s,s}^1$ and $\Theta_{t,t}^1$ as
\begin{eqnarray*}
\mathfrak{F}_{s,t}^1=\sum_{(i,j)\in\mathcal{K}_{t,s,\mathbf{a}^1_{d^1}}}\mathfrak{F}_{i,j}^0.
\end{eqnarray*}
We repeat these two steps over and over again until we can not further group in this way any more.  Without loss of generality, we assume that these two steps can be repeated $\tau$ times.  Moreover, for the sake of simplicity, we re-denote the optimal block  decomposition by $\mathbf{A}_{d^0}^0$.  Then we will obtain a sequence of decompositions
\begin{eqnarray*}
\mathbf{A}_{d^\varsigma}^\varsigma=[\Theta_{t,s}^\varsigma]_{t,s=1,2,\cdots,d^\varsigma}
\end{eqnarray*}
with
\begin{eqnarray*}
\Theta_{t,s}^\varsigma=[\Theta_{i,j}^{\varsigma-1}]_{(i,j)\in\mathcal{K}_{t,s,\mathbf{a}^\varsigma_{d^\varsigma}}}
\end{eqnarray*}
and $1\leq\varsigma\leq\tau$,
\begin{eqnarray*}
\mathcal{K}_{t,s,\mathbf{a}^\varsigma_{d^\varsigma}}=(a^\varsigma_{t-1}, a^\varsigma_{t}]_{\bbn}\times(a^\varsigma_{s-1}, a^\varsigma_{s}]_{\bbn}
\end{eqnarray*}
with $\mathbf{a}^\varsigma_{d^\varsigma}=(a^\varsigma_0,a^\varsigma_1,\cdots,a^\varsigma_{d^\varsigma})$, $(a^\varsigma_{t-1}, a^\varsigma_{t}]_{\bbn}=(a^\varsigma_{t-1}, a^\varsigma_{t}]\cap\bbn$ and $0=a^\varsigma_0<a^\varsigma_1<\cdots<a^\varsigma_{d^\varsigma-1}<a^\varsigma_{d^\varsigma}=d^{\varsigma-1}$, and a sequence $1\leq d^{\tau}< d^{\tau-1}< \cdots< d^1< d^0=d$.  $\mathbf{A}_{d^\tau}^\tau$ is called an {\bf eventual block  decomposition} of $\mathbf{A}_{d^0}^0$, and the number of $(s,s)$ blocks $\Theta_{s,s}^\tau$ is called the degree of $\mathbf{A}_{d^\tau}^\tau$ and is denoted by $m$.  To obtain all eventual block  decompositions of $\mathbf{A}_{d^0}^0$, for the $\varsigma_{th}$ decomposition $\mathbf{A}_{d^\varsigma}^\varsigma$, $0\leq\varsigma\leq\tau-1$, we should write down all next decompositions $\mathbf{A}_{d^{\varsigma+1}}^{\varsigma+1}$ in the above way under the action of permutations.  Clearly, for other optimal block decompositions, we can obtain their eventual block decompositions in the same way.  By our definitions, the degrees of eventual block decompositions of the purely repulsive case and the repulsive-mixed cases are always strictly large than $1$, while the degrees of eventual block decompositions of the purely attractive case always equal to $1$.

\medskip

In the $(s,s)$ block $\Theta_{s,s}=[\beta_{i,j}]_{(i,j)\in\mathcal{K}_{s,s,\mathbf{a}_d}}$ of $\mathbf{A}_d$, either $\{i\not=j,(i,j)\in\mathcal{K}_{s,s,\mathbf{a}_d}\}\not=\emptyset$ or $\{i\not=j,(i,j)\in\mathcal{K}_{s,s,\mathbf{a}_d}\}=\emptyset$.  Without loss of generality, we assume that $\{i\not=j,(i,j)\in\mathcal{K}_{s,s,\mathbf{a}_d}\}\not=\emptyset$ for $s=1,2,\cdots,s_0$ and $\{i\not=j,(i,j)\in\mathcal{K}_{s,s,\mathbf{a}_d}\}=\emptyset$ for $s=s_0+1,\cdots,d$ with an $s_0\in\{0,1,2,\cdots,d\}$.  For every $d\leq\gamma\leq k$, there exists a unique $0\leq s^*\leq s_0$ such that $a_{s^*}\leq k-\gamma<a_{s^*+1}$.  Now, our results for the general $k$-coupled system~\eqref{eqn0001} can be stated as follows.
\begin{theorem}\label{thm0002}
Let $N=1,2,3$ and $k\geq 3$.  Suppose that the degree of optimal block decompositions of the coefficient matrix $\Theta$ is $d$.  Then,
\begin{enumerate}
\item[$(1)$]  if all eventual block decompositions satisfy $m=1$ then for every $d\leq\gamma\leq k$, there exist $\widehat{\beta}_0>\beta_0>0$ such that if
    \begin{enumerate}
    \item[$(i)$]  $\beta_{i,j}>\widehat{\beta}_0$ and $|\beta_{i,j}-\beta_{i,l}|<<1$ for all $(i,j), (i,l)\in\mathcal{K}_{s,s,\mathbf{a}_d}$ with $i\not=j$, $i\not=l$ and $j\not=l$, and $i,j,l\leq k-\gamma+1$,
    \item[$(ii)$]  $\beta_{i,j}<\beta_0$ for all other $(i,j)$ with $i\not=j$ that are not contained in $(i)$,
    \end{enumerate}
    then \eqref{eqn0001} has a ground state with the Morse index $\gamma$, provided that $|\lambda_i-\lambda_j|<<1$ for all $i,j\in\mathcal{K}_{s,s,\mathbf{a}_d}$ and $i\not=j$ with $0\leq s\leq s^*$ satisfying $a_s-a_{s-1}\geq3$ and for all $i,j\in\mathcal{K}_{s^*+1,s^*+1,\mathbf{a}_d}$,$i\not=j$ and $i,j\leq k-\gamma+1$ satisfying $k-\gamma-a_{s^*}\geq3$.  In particular, in the purely attractive case, for every $1\leq\gamma\leq k$, \eqref{eqn0001} has a ground state with the Morse index $\gamma$.
\item[$(2)$]  Suppose $\beta_{i,j}=\delta^{t_{i,j}}\widehat{\beta}_{i,j}$, where $\delta>0$ is a parameter and $t_{i,j}$, $\widehat{\beta}_{i,j}$ are absolute constants.  If the couplings $\beta_{i,j}$ are total-mixed, $t_{i,j}=t_0$ for all $(i,j)\in\mathcal{K}_{s,s,\mathbf{a}_d}$ and all $0\leq s\leq s_0$, $t_{0}< t_{min,int,+}$, $t_{max,-}<t_{min,+}$ and
    \begin{eqnarray*}
\min\{\sqrt{\lambda_{i_0}}, \sqrt{\lambda_{j_0}}\}\geq\min\{\sqrt{\lambda_{i_0'}}, \sqrt{\lambda_{j_0'}}\}
\end{eqnarray*}
for all $(i_0,j_0)$ and $(i_0',j_0')$ with $\beta_{i_0,j_0}>0>\beta_{i_0',j_0'}$,
then $\mathcal{C}_{\mathcal{N}}$ can not be attained for $\delta>0$ sufficiently small.  That is, \eqref{eqn0001} has no ground states.  Here, $t_{max,-}=\max\{t_{i,j}\mid\widehat{\beta}_{i,j}<0\}$, $t_{min,+}=\min\{t_{i,j}\mid\widehat{\beta}_{i,j}>0\}$,
and
$$
t_{min,int,+}=\min\{t_{i,j}\mid\widehat{\beta}_{i,j}>0\text{ and }\beta_{i,j}\text{ is a inter-coupling}\}.
$$
\item[$(3)$]  If the couplings $\beta_{i,j}$ are repulsive-mixed or purely repulsive, then $\mathcal{C}_{\mathcal{N}}$ can not be attained, provided that the coefficient matrix $\Theta= (\beta_{i,j})$ is positively definite.  That is, \eqref{eqn0001} has no ground states.
\end{enumerate}
\end{theorem}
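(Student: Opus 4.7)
The plan is to induct on the depth $\tau$ of the eventual block decomposition and, within each level, on the component count $k$, reducing Theorem~\ref{thm0002} to the base cases $k=3$ and $k=4$ already established in Theorems~\ref{coro0001} and~\ref{thm0003}. The analysis revolves around two quantitative tools: concentration-compactness on the Nehari manifold $\mathcal{N}$, which identifies the noncompactness thresholds with sums of least energies of sub-systems determined by the block structure, and the interaction force asymptotics $\mathfrak{F}^{\varsigma}_{s,t}$, which measure the sign and rate of the leading energy correction when two blocks are translated apart by a large scale $R$. Since $\lambda_j>0$ forces every block ground state to decay like $e^{-\sqrt{\lambda_j}|x|}$, these interactions are exponentially small and carry the sign of the corresponding $\mathfrak{F}^{\varsigma}_{s,t}$.

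For part~(1), the hypothesis $m=1$ guarantees that at every level $\varsigma$ of the iterative grouping process at least one pair of blocks has $\mathfrak{F}^{\varsigma}_{s,t}>0$. Using inductively available ground states $\overrightarrow{v}^{(s)}$ of the sub-blocks, I construct a test function $\overrightarrow{u}_R$ by translating the blocks apart by distance $R$ along well-chosen unit vectors and projecting onto $\mathcal{N}$ by the standard Nehari scaling. Optimizing over $R$ drives the energy strictly below the splitting threshold, so by concentration-compactness any minimizing sequence is compact modulo translation and a ground state is obtained. For the Morse index count, the smallness assumptions $|\beta_{i,j}-\beta_{i,l}|\ll 1$ and $|\lambda_i-\lambda_j|\ll 1$ place the linearization near a symmetric reference problem whose constrained Hessian on $T\mathcal{N}$ has an explicitly computable spectrum; a perturbation argument then isolates exactly $\gamma$ negative directions.

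For part~(2), the scaling $\beta_{i,j}=\delta^{t_{i,j}}\widehat{\beta}_{i,j}$ with $t_{\max,-}<t_{\min,+}$ and $t_{0}<t_{\min,\mathrm{int},+}$ makes the negative inter-couplings asymptotically dominant. Combined with $\min\{\sqrt{\lambda_{i_0}},\sqrt{\lambda_{j_0}}\}\ge \min\{\sqrt{\lambda_{i_0'}},\sqrt{\lambda_{j_0'}}\}$ whenever $\beta_{i_0,j_0}>0>\beta_{i_0',j_0'}$, the positive contributions to each $\mathfrak{F}^{0}_{s,t}$ decay strictly faster than the negative ones, so $\mathfrak{F}^{0}_{s,t}<0$ for all $s\ne t$ and every eventual decomposition has degree $m\ge 2$. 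Following the template of Theorem~\ref{coro0001}(4), translated independent block ground states form a minimizing sequence of energy $\sum_s \mathcal{C}_{\mathcal{N}_s}$, and a Brezis-Lieb type decomposition rules out attainment: any hypothetical minimizer would have to split into disjoint blocks at infinity, contradicting the positivity of its components. Part~(3) is handled by the same positive-definiteness argument as in the $k=2$ purely repulsive case: summing the Nehari identities yields a lower bound that matches the uncoupled splitting energy, while any joint minimizer would have strictly larger energy than the splitting value.

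The main obstacle is the bookkeeping in part~(1): at each of the $\tau$ inductive levels of the eventual decomposition one must verify that the test-function construction preserves the uniform decay and regularity bounds demanded by the $\mathfrak{F}^{\varsigma}_{s,t}$ asymptotics, and that the Palais-Smale splitting at every inductive step matches the block-structural splitting predicted by the decomposition. The Morse index count is a secondary delicate point: since $\mathcal{N}$ has codimension $k$, one must carefully track which negative directions of the free Hessian are killed by the constraint versus which survive as negative directions on $T\mathcal{N}$, and the conditions $a_s-a_{s-1}\ge 3$ and $k-\gamma-a_{s^*}\ge 3$ appear to be precisely what is needed to guarantee that enough negative directions remain after projection.
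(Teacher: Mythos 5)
Your overall architecture (induction over the block decomposition, interaction-force asymptotics to build test functions below the splitting thresholds, concentration-compactness for existence, sign analysis of inter-block interactions for nonexistence) matches the paper's strategy, but there are two concrete gaps.

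First, in part (1) your mechanism for prescribing the Morse index does not work as described. You propose to project test functions onto the full Nehari manifold $\mathcal{N}$ and then extract the index $\gamma$ from a spectral perturbation of the constrained Hessian on $T\mathcal{N}$. But $\mathcal{N}$ has exactly $k$ scalar constraints, and (as in Lemma~\ref{lemn0001}) a positive minimizer on $\mathcal{N}$ automatically has Morse index exactly $k$: it is $\leq k$ because the constraint has codimension $k$, and $\geq k$ because $\mathcal{E}''(\overrightarrow{v})(\overrightarrow{v}_i,\overrightarrow{v}_i)=-2\mu_i\|v_i\|_4^4<0$ for each coordinate direction. No perturbation of $\beta_{i,j}$ or $\lambda_j$ changes this count, so minimization on $\mathcal{N}$ can never produce index $\gamma<k$. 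The paper's device, which your proposal is missing, is a \emph{family} of grouped Nehari-type manifolds $\mathcal{N}_\gamma$ (generalizing $\mathcal{M}_{12,3}$ and $\mathcal{M}$ of Section~4) in which components inside a strongly coupled group share a single constraint $\sum_j\mathcal{G}_j(\overrightarrow{u})=0$; the number of constraints is then exactly $\gamma$, and the same two-sided argument gives index exactly $\gamma$. The hypotheses $\beta_{i,j}>\widehat{\beta}_0$ on the grouped pairs are what make these coarser constraints natural (diagonal dominance of the matrix $\Upsilon$), and the conditions $a_s-a_{s-1}\geq3$, $k-\gamma-a_{s^*}\geq3$ merely flag the groups with at least three members, for which the $\widehat{\rho}_{ij,l}$ comparison of Proposition~\ref{prop0005} forces the hypothesis $|\lambda_i-\lambda_j|<<1$; they are not about "negative directions surviving projection."

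Second, in part (2) your stated contradiction ("any hypothetical minimizer would have to split into disjoint blocks at infinity, contradicting the positivity of its components") is not a valid argument: a genuine ground state in the total-mixed case typically \emph{does} have components concentrating at mutually diverging points while remaining everywhere positive — that is exactly the structure in Lemma~\ref{lem0002} and in the existence results. The actual contradiction in the paper is quantitative: assuming a ground state $\overrightarrow{u}_\delta$ exists, one combines the test-function upper bound $\mathcal{C}_{\mathcal{N}}\leq\sum_l\mathcal{C}_{\mathcal{N}_l}+\sum_s\mathcal{C}_{\mathcal{M}_{a_s}}$ with a lower bound obtained from the sharp embedding constant to force $\sum_{s<t}\sum_{(i,j)\in\mathcal{K}_{s,t,\mathbf{a}_d}}\beta_{i,j}\|u_{i,\delta}u_{j,\delta}\|_2^2\geq0$, and then uses the two-sided asymptotics of Lemma~\ref{lemn0010} together with the location estimates ($y_{i,\delta}-y_{j,\delta}\to0$ inside blocks, $|y_{l,\delta}-y_{l',\delta}|\to+\infty$ between blocks, proved via the monotonicity of $F$) and the hypotheses $t_{max,-}<t_{min,+}$ and the $\lambda$-ordering to show this same sum is strictly negative. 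You cite Theorem~\ref{coro0001}(4) as a template, which contains the right argument, but the mechanism you actually wrote down would not close the proof. Part (3) as you describe it is consistent with the paper, which reduces the repulsive-mixed case to the purely repulsive argument by treating each block of an optimal decomposition as a single unit.
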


\begin{remark}
\begin{enumerate}
\item[$(a)$]  The existence result yields a very interesting consequence:  The degree of optimal block decompositions determines the lower bound of the Morse index of the ground states of \eqref{eqn0001}.  According to our definitions, the degree of optimal block decompositions is the smallest number of the groups, which are made up by the components $\{u_j\}$ such that they are all attractive to each others in these groups.  This implies that, in Bose-Einstein condensates for multi-species condensates, the components $\{u_j\}$ will huddle as much as possible.  On the other hand, as one can see by comparing Theorems~\ref{coro0001} and \ref{thm0003}, the existence conditions of the four-coupled system~\eqref{eqnewnew0001} in the total-mixed case $(H)$ at (\ref{Hdef}) are much stronger than that of the three-coupled system~\eqref{eqnew0001}.  This is caused by the fact that the four-coupled system~\eqref{eqnewnew0001} has more $(s,s)$ blocks in its optimal block decompositions in the total-mixed case $(H)$ at (\ref{Hdef}), which needs more interaction forces to be positive to grantee the existence of ground states.  Thus, it seems that the ground states are harder to exist if its optimal block decompositions has more $(s,s)$ blocks.  In the extremal case in this direction, i.e., the purely repulsive case or the repulsive-mixed cases, there are no ground states.
\item[$(b)$]  As we pointed out in $(c)$ of Remark~\ref{rmk0003}, some existence and nonexistence results for \eqref{eqn0001} in some very special cases have been obtained in the literature, see, for example, \cite{LW05,LW10,ST16}.
\item[$(c)$]  Another interesting fact is that the Morse index of ground states is related to the number of eigenvalues of the coefficient matrix.  To understand this relation, we use the four-coupled system~\eqref{eqnewnew0001} in the total-mixed case $(H)$ at (\ref{Hdef}) as an example.  Indeed, under the conditions of $(1)$ of Theorem~\ref{thm0003}, the coefficient matrix is nonsingular.  Moreover, in $(i)$ of $(1)$ of Theorem~\ref{thm0003} the coefficient matrix has four positive eigenvalues, while in $(ii)$ of $(1)$ of Theorem~\ref{thm0003} the coefficient matrix has three positive eigenvalues and one negative eigenvalue.  Since roughly speaking, the superlinear nonlinearities are determined by the coefficient matrix and they ``generate'' the negative part in the second derivative of the functional, $\gamma$ positive eigenvalues of the coefficient matrix will ``generate'' $\gamma$ Morse index of the ground states.
\end{enumerate}
\end{remark}

Since the main ideas in proving these three Theorems are similar, to make our proof easier to follow and to avoid unnecessary complicated calculations, we only give a complete proof of Theorem~\ref{coro0001} in section~4.  We will also sketch the proof of Theorems~\ref{thm0003} and \ref{thm0002} by pointing out necessary changes in section~5.
\medskip

\noindent{\bf\large Notations.} Throughout this paper, $C$ and $C'$ are indiscriminately used to denote various absolutely positive constants.  $a\sim b$ means that $C'b\leq a\leq Cb$ and $a\lesssim b$ means that $a\leq Cb$.

\section{Three-coupled system~\eqref{eqnew0001}}

\subsection{Some preliminaries}
In this section, we state some well-known results which will be frequently used in proving Theorem~\ref{coro0001}.
Let $w_j$ be the unique solution of the following scalar field equation
\begin{eqnarray}\label{eqnew0013}
\left\{\aligned&-\Delta u +\lambda_j u =\mu_ju^3\quad\text{in }\bbr^N,\\
&u>0\quad\text{in }\bbr^N,\quad u(0)=\max_{x\in\bbr^N}u(x),\\
&u(x)\to0\quad\text{as }|x|\to+\infty.
\endaligned\right.
\end{eqnarray}
Then, $w_j$, satisfying
\begin{eqnarray}\label{eqnew9998}
w_j(|x|)\sim |x|^{-\frac{N-1}{2}}e^{-\sqrt{\lambda_j}|x|}\quad\text{as }|x|\to+\infty,
\end{eqnarray}
is radially symmetric and strictly decreasing in $|x|$.
The energy functional of \eqref{eqnew0013} in $\mathcal{H}_j$ is given by
\begin{eqnarray}\label{eqnew0099}
\mathcal{E}_j(u)=\frac{1}{2}\|u\|_{\lambda_j}^2-\frac{\mu_j}{4}\|u\|_4^4
\end{eqnarray}
and the corresponding  Nehari manifold is
\begin{eqnarray*}
\mathcal{N}_j=\{u\in\mathcal{H}_j\backslash\{0\}\mid\mathcal{E}_j'(u)u=0\}.
\end{eqnarray*}
We need the following estimate which will be used frequently in this paper. The proof is technical and thus delayed to appendix.
\begin{lemma}\label{lemn0010}
Let $N=1,2,3$ and $w_j$ be the unique solution of \eqref{eqnew0013}.  Suppose $e_1\in\bbr^N$ such that $|e_1|=1$.  Then as $R\to+\infty$,
\begin{eqnarray*}
\int_{\bbr^N}w_i^2(x)w_j^2(x-Re_1)dx\sim \left\{\aligned R^{1-N}e^{-2\min\{\sqrt{\lambda_i}, \sqrt{\lambda_j}\}R},\quad \lambda_i\not=\lambda_j;\\
R^{1+\alpha-N}e^{-2\sqrt{\lambda}R},\quad \lambda_i=\lambda_j=\lambda,\endaligned
\right.
\end{eqnarray*}
where $\alpha=1$ for $N=1$ and $\alpha=\frac{1}{2}$ for $N=2,3$.
\end{lemma}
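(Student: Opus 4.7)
The plan is to prove Lemma~\ref{lemn0010} by combining the sharp decay rate \eqref{eqnew9998} with a Laplace-type analysis of the integral $I(R):=\int_{\bbr^N} w_i^2(x) w_j^2(x-Re_1)\,dx$. We split $\bbr^N$ into three pieces: $A=\{|x|\le M\}$, $B=\{|x-Re_1|\le M\}$ and $C=\bbr^N\setminus(A\cup B)$, with $M>0$ large but fixed, and estimate each piece separately.

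On region $A$, we write $|x-Re_1|=R-x\cdot e_1+O(|x|^2/R)$ and apply \eqref{eqnew9998} to the $(x-Re_1)$-factor to obtain
\[
w_j^2(x-Re_1)=C_j R^{-(N-1)}e^{-2\sqrt{\lambda_j}R}e^{2\sqrt{\lambda_j}\,x\cdot e_1}(1+o(1))
\]
uniformly in $|x|\le M$. Integrating against $w_i^2(x)$ yields a contribution of order $R^{-(N-1)}e^{-2\sqrt{\lambda_j}R}$ times a positive constant, and region $B$ by symmetry contributes $R^{-(N-1)}e^{-2\sqrt{\lambda_i}R}$. In the non-degenerate case $\lambda_i\ne\lambda_j$ (WLOG $\lambda_i<\lambda_j$), region $B$ dominates: region $A$ is exponentially smaller by the gap $\sqrt{\lambda_j}-\sqrt{\lambda_i}$, and on region $C$ the triangle inequality $|x|+|x-Re_1|\ge R$ together with the same gap gives an exponentially smaller bound. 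The constant $\int_{\bbr^N}w_j^2(x)e^{2\sqrt{\lambda_i}x\cdot e_1}\,dx$ is finite because the decay $e^{-2\sqrt{\lambda_j}|x|}$ of $w_j^2$ beats the growth $e^{2\sqrt{\lambda_i}x_1}$ along every ray as soon as $\lambda_j>\lambda_i$.

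The degenerate case $\lambda_i=\lambda_j=\lambda$ is the main challenge, since the phase $|x|+|x-Re_1|$ attains its minimum value $R$ on the entire segment $\{se_1:0\le s\le R\}$ rather than at an isolated point. Parametrize region $C$ by $x=se_1+z$ with $s\in(M,R-M)$ and $z\perp e_1$, $z\in\bbr^{N-1}$; the Taylor expansion
\[
|x|+|x-Re_1|=R+\frac{R|z|^2}{2s(R-s)}+O\!\Big(\tfrac{|z|^4}{s^3}+\tfrac{|z|^4}{(R-s)^3}\Big)
\]
together with $|x|^{-(N-1)}|x-Re_1|^{-(N-1)}\approx(s(R-s))^{-(N-1)}$ reduces the $z$-integration to a Gaussian of width $\sqrt{s(R-s)/R}$, contributing a factor $(s(R-s)/R)^{(N-1)/2}$. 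The resulting $s$-integral $R^{-(N-1)/2}\int_M^{R-M}(s(R-s))^{-(N-1)/2}\,ds$ then produces the announced prefactor $R^{1+\alpha-N}$, the parameter $\alpha$ reflecting the balance between the $(N-1)$-dimensional transverse Gaussian and the one-dimensional degenerate minimum set of length $R$. An alternative bookkeeping uses prolate spheroidal coordinates with foci at $0$ and $Re_1$ together with Watson's lemma, which has the virtue of giving exact constants.

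The main obstacle is this degenerate case: the transverse Gaussian must be tracked uniformly against the varying prefactor $(s(R-s))^{-(N-1)}$, and the matching of region $C$ with regions $A$, $B$ near the endpoints $s\sim M$ and $s\sim R-M$ must be controlled carefully so that the leading power of $R$ is not corrupted. Upper bounds on the error regions, where the Taylor expansion of the phase is invalid or $|z|$ becomes comparable to $s$ or $R-s$, follow from elementary exponential estimates using the triangle inequality and are easily shown to be of strictly lower order than the main contribution.
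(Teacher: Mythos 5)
Your decomposition and the Laplace-type analysis along the segment joining $0$ and $Re_1$ are essentially the same strategy as the paper's proof (which disposes of $\lambda_i\not=\lambda_j$ by citing \cite[Lemma~6]{LW05} and handles $\lambda_i=\lambda_j$ by splitting off neighbourhoods of the two centres and estimating the middle region in polar coordinates). Your bookkeeping is sound for $\lambda_i\not=\lambda_j$ and for the degenerate case in dimensions $N=1,2$: the transverse Gaussian of width $\sqrt{s(R-s)/R}$ against the prefactor $(s(R-s))^{-(N-1)}$ gives $e^{-2\sqrt{\lambda}R}R^{-(N-1)/2}\int_M^{R-M}(s(R-s))^{-(N-1)/2}ds$, which is $\sim Re^{-2\sqrt{\lambda}R}$ for $N=1$ and $\sim \pi R^{-1/2}e^{-2\sqrt{\lambda}R}$ for $N=2$, as announced. (A minor imprecision: in the nondegenerate case the part of region $C$ adjacent to the dominant centre is not exponentially small in $R$; it contributes the same power $R^{1-N}e^{-2\min\{\sqrt{\lambda_i},\sqrt{\lambda_j}\}R}$ with a constant that is only small in $M$. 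This does not affect the two-sided bound.)

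The genuine problem is the last step for $N=3$: your own formula does not produce the announced prefactor there. Indeed $R^{-1}\int_M^{R-M}(s(R-s))^{-1}ds=R^{-2}\cdot 2\log\frac{R-M}{M}\sim 2R^{-2}\log R$, while the endpoint regions $A$ and $B$ contribute only $O_M(1)\,R^{-2}e^{-2\sqrt{\lambda}R}$, so your method yields $\int_{\bbr^3}w_i^2(x)w_j^2(x-Re_1)\,dx\sim R^{-2}\log R\, e^{-2\sqrt{\lambda}R}$, of strictly smaller order than the claimed $R^{1+\alpha-N}=R^{-3/2}$. This is not a slip in your Gaussian width: the exact prolate spheroidal computation you mention gives $\frac{4\pi}{R}\int_1^{\infty}e^{-2\sqrt{\lambda}R\xi}\,\xi^{-1}\log\frac{\xi+1}{\xi-1}\,d\xi\sim CR^{-2}\log R\,e^{-2\sqrt{\lambda}R}$, confirming the logarithm. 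So the sentence asserting that the $s$-integral ``produces the announced prefactor $R^{1+\alpha-N}$'' is false for $N=3$; as written your argument proves the lemma only for $N=1,2$, and for $N=3$ it actually contradicts the stated lower bound. (For comparison, the paper's proof reaches $R^{3/2-N}$ in its upper bound by discarding the angular Jacobian $(\sin\rho)^{N-2}$ --- harmless for $N=2$ but exactly the factor that generates the logarithm when $N=3$ --- and its lower-bound computation has the same defect; your more careful transverse bookkeeping exposes this.) You must either restrict the degenerate claim to $N=1,2$ or replace the $N=3$ rate by $R^{-2}\log R\,e^{-2\sqrt{\lambda}R}$ and track how this change propagates into the places where the lemma is invoked.
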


We also define energy functionals, which are of class $C^2$ in $\mathcal{H}_{i,j}=\mathcal{H}_i\times\mathcal{H}_j$, as follows:
\begin{eqnarray}\label{eqn0104}
\mathcal{E}_{i,j}(\overrightarrow{\phi})=\frac{1}{2}(\|\phi_i\|_{\lambda_i}^2+\|\phi_j\|_{\lambda_j}^2)
-\frac{1}{4}(\mu_i\|\phi_i\|_{4}^4+\mu_j\|\phi_j\|_{4}^4)-\frac{\beta_{i,j}}{2}\|\phi_i\phi_j\|_2^2,
\end{eqnarray}
where $\overrightarrow{\phi}=(\phi_i, \phi_j)$ and $(i,j)$ equals to $(1,2)$, $(1,3)$ or $(2,3)$.  Positive critical points of $\mathcal{E}_{i,j}(\overrightarrow{\phi})$ are equivalent to the solutions of the following system
\begin{equation}\label{eqn0099}
\left\{\aligned&-\Delta u_i+\lambda_iu_i=\mu_iu_i^3+\beta_{i,j} u_j^2u_i\quad\text{in }\bbr^N,\\
&-\Delta u_j+\lambda_ju_j=\mu_ju_j^3+\beta_{i,j} u_i^2u_j\quad\text{in }\bbr^N,\\
&u_i,u_j>0\quad\text{in }\bbr^N,\quad u_i(x),u_j(x)\to0\quad\text{as }|x|\to+\infty.\endaligned\right.
\end{equation}
We define the Nehari manifold of $\mathcal{E}_{i,j}(\overrightarrow{\phi})$ as follows:
\begin{eqnarray*}
\mathcal{N}_{i,j}=\{\overrightarrow{\phi}\in\widetilde{\mathcal{H}}_{i,j}\mid \overrightarrow{\widehat{\mathcal{G}}}_{i,j}(\overrightarrow{\phi})=(\widehat{\mathcal{G}}_i(\overrightarrow{\phi}), \widehat{\mathcal{G}}_j(\overrightarrow{\phi}))=\overrightarrow{0}\},
\end{eqnarray*}
where $\widetilde{\mathcal{H}}_{i,j}=(\mathcal{H}_i\backslash\{0\})\times(\mathcal{H}_j\backslash\{0\})$, $\widehat{\mathcal{G}}_j(\overrightarrow{\phi})=\|\phi_j\|_{\lambda_j}^2
-\mu_j\|\phi_j\|_{4}^4-\beta_{i,j}\|\phi_i\phi_j\|_2^2$ and $\widehat{\mathcal{G}}_i(\overrightarrow{\phi})=\|\phi_i\|_{\lambda_i}^2
-\mu_i\|\phi_i\|_{4}^4-\beta_{i,j}\|\phi_i\phi_j\|_2^2$.  Let
\begin{eqnarray}\label{eqn0110}
\mathcal{C}_{\mathcal{N}_{i,j}}=\inf_{\mathcal{N}_{i,j}}\mathcal{E}_{i,j}(\overrightarrow{\phi}).
\end{eqnarray}
Then, $\mathcal{C}_{\mathcal{N}_{i,j}}$ is well defined and nonnegative for all $i\not = j$.  Moreover, there exists $0<\beta_*<\sqrt{\mu_i \mu_j}$ such that if $0<\beta_{i,j}<\beta_*<\sqrt{\mu_i \mu_j}$ then $\mathcal{C}_{\mathcal{N}_{i,j}}$ is attained by $\overrightarrow{\varphi}^{i,j}$ which is positive and radially symmetric (cf. \cite[Theorem~1.2]{CZ13}).  Clearly, $\overrightarrow{\varphi}^{i,j}$ is also a solution of \eqref{eqn0099}.  Applying the comparison principle as for \cite[(4.6) and (4.7)]{LW05} yields that
\begin{eqnarray}\label{eqnew9999}
\varphi^{i,j}_{i}(|x|)\sim |x|^{-\frac{N-1}{2}}e^{-\sqrt{\lambda_i}|x|}\quad\text{as }|x|\to+\infty.
\end{eqnarray}

\subsection{Ground states with the Morse index 3}
In this section, we will study the existence of the ground states of \eqref{eqnew0001} with the Morse index 3, in the total-mixed case~$(d)$: $\beta_{1,2}>0$, $\beta_{1,3}>0$ and $\beta_{2,3}<0$.

Recall the definition of the Nehari manifold $\mathcal{N}$ at (\ref{Neh1}) and the least energy value
 $\mathcal{C}_{\mathcal{N}}=\inf_{\mathcal{N}}\mathcal{E}(\overrightarrow{u})$ at (\ref{CN}).
 Using $(w_{1,-R},w_2,w_{3,R})$ as a test function and calculating similarly in the proof of \cite[Theorem~1]{LW05} yields
\begin{eqnarray}\label{eqnew0098}
\mathcal{C}_{\mathcal{N}}\leq\sum_{j=1}^3\mathcal{E}_j(w_j),
\end{eqnarray}
where $w_j$ and $\mathcal{E}_j(u)$ are given by \eqref{eqnew9998} and \eqref{eqnew0099}, respectively, and $w_{j,z}=w_j(x+z)$.
\begin{lemma}\label{lemn0001}
There exists $\beta_0>0$ such that $\mathcal{N}$ contains a $(PS)$ sequence at the least energy value $\mathcal{C}_{\mathcal{N}}$ for $0<\beta_{1,2},\beta_{1,3}<\beta_0$ and $\beta_{2,3}<0$.
Moreover, any positive minimizer of $\mathcal{E}(\overrightarrow{u})$ on $\mathcal{N}$ is a ground state of \eqref{eqnew0001} with the Morse index 3.
\end{lemma}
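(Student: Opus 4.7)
The plan is to produce the $(PS)$ sequence by applying Ekeland's variational principle on the Nehari manifold $\mathcal{N}$, confirm that the constraint is natural so that Lagrange multipliers vanish in the limit, and then read off the Morse index at any positive minimiser directly from the second variation.

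First, I would verify that $\mathcal{N}$ is a $C^1$ submanifold of $\widetilde{\mathcal{H}}$ of codimension three. Differentiating $\overrightarrow{t}\mapsto\mathcal{G}_j(t_1u_1,t_2u_2,t_3u_3)$ at $\overrightarrow{t}=\overrightarrow{1}$ and using $\mathcal{G}_j(\overrightarrow{u})=0$ reduces the relevant Jacobian to the symmetric matrix
\begin{eqnarray*}
M_{jj}=-2\mu_j\|u_j\|_4^4,\qquad M_{jl}=-2\beta_{j,l}\|u_ju_l\|_2^2\quad(j\neq l).
\end{eqnarray*}
The off-diagonal entries in row $1$ are $O(\beta_0)$ by H\"older, so for $\beta_0$ small they are dominated by $|M_{11}|$; for row $2$ the Nehari identity yields $\mu_2\|u_2\|_4^4=\|u_2\|_{\lambda_2}^2-\beta_{1,2}\|u_1u_2\|_2^2+|\beta_{2,3}|\|u_2u_3\|_2^2$, so the dangerous off-diagonal $|M_{23}|=2|\beta_{2,3}|\|u_2u_3\|_2^2$ is strictly dominated by $|M_{22}|$ as soon as $\|u_2\|_{\lambda_2}^2>2\beta_{1,2}\|u_1u_2\|_2^2$, which is automatic for $\beta_{1,2}$ small because a Sobolev argument gives $\|u_j\|_{\lambda_j}$ a positive lower bound on $\mathcal{N}$. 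The analogous estimate holds for row $3$, so $M$ is strictly diagonally dominant, and hence invertible, uniformly along bounded sequences in $\mathcal{N}$.

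Summing the three Nehari identities collapses the constrained functional to
\[
\mathcal{E}(\overrightarrow{u})\big|_{\mathcal{N}}=\tfrac{1}{4}\sum_{j=1}^3\|u_j\|_{\lambda_j}^2\geq 0,
\]
so $\mathcal{C}_{\mathcal{N}}$ is finite (by \eqref{eqnew0098}), nonnegative, and minimising sequences are bounded in $\mathcal{H}$. Ekeland's principle on the $C^1$ manifold $\mathcal{N}$ then furnishes $\{\overrightarrow{u}_n\}\subset\mathcal{N}$ with $\mathcal{E}(\overrightarrow{u}_n)\to\mathcal{C}_{\mathcal{N}}$ and a constrained derivative tending to zero. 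The Lagrange relation $\mathcal{E}'(\overrightarrow{u}_n)=\sum_{l=1}^{3}\tau_{l,n}\mathcal{G}_l'(\overrightarrow{u}_n)+o(1)$ in $\mathcal{H}^{-1}$ tested against the special directions $(0,\dots,(u_n)_j,\dots,0)$ gives a linear system $M(\overrightarrow{u}_n)\overrightarrow{\tau}_n=o(1)$; by the uniform invertibility of $M$, $\overrightarrow{\tau}_n\to\overrightarrow{0}$, so $\{\overrightarrow{u}_n\}$ is a free $(PS)$ sequence for $\mathcal{E}$ at level $\mathcal{C}_{\mathcal{N}}$.

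For the Morse index at any positive minimiser $\overrightarrow{v}$, I would test the second variation on the three linearly independent vectors $\overrightarrow{e}_l:=(0,\dots,v_l,\dots,0)$, $l=1,2,3$. A direct computation using Nehari gives
\[
\mathcal{E}''(\overrightarrow{v})[\overrightarrow{e}_l,\overrightarrow{e}_l]=\|v_l\|_{\lambda_l}^2-3\mu_l\|v_l\|_4^4-\sum_{i\neq l}\beta_{i,l}\|v_iv_l\|_2^2=-2\mu_l\|v_l\|_4^4<0,
\]
so the Morse index is at least three. Conversely, because $\overrightarrow{v}$ minimises $\mathcal{E}$ on the codimension-three manifold $\mathcal{N}$, $\mathcal{E}''(\overrightarrow{v})\geq 0$ on $T_{\overrightarrow{v}}\mathcal{N}=\ker d\overrightarrow{\mathcal{G}}(\overrightarrow{v})$; any subspace on which $\mathcal{E}''(\overrightarrow{v})$ is negative definite must intersect $T_{\overrightarrow{v}}\mathcal{N}$ only at the origin, so its dimension is at most three. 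Combining the two bounds forces the Morse index to equal three, and the natural-constraint argument above (applied with $o(1)$ replaced by $0$) shows $\overrightarrow{v}$ is a critical point of $\mathcal{E}$, hence a classical ground state by elliptic regularity. The main technical obstacle is the \emph{uniform} diagonal dominance of $M$ along the $(PS)$ sequence; smallness of $\beta_{1,2},\beta_{1,3}$ controls the first row and column, while the sign condition $\beta_{2,3}<0$ is precisely what allows the Nehari identity to boost the diagonal in the $(2,3)$ block enough to absorb the potentially large term $|\beta_{2,3}|\|u_2u_3\|_2^2$ --- a cancellation unavailable in the repulsive-mixed setting of Theorem~\ref{coro0001}(2).
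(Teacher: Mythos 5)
Your proposal is correct and follows essentially the same route as the paper: establish uniform invertibility of the constraint matrix $\Xi=[\beta_{i,j}\|u_iu_j\|_2^2]$ (equivalently your $M=-2\Xi$) via strict diagonal dominance, run Ekeland on $\mathcal{N}$ and kill the Lagrange multipliers through that same matrix, and read off the Morse index from the splitting $\mathcal{H}=\mathcal{T}_{\overrightarrow{v}}\mathcal{N}\oplus(\bbr\overrightarrow{v}_1\times\bbr\overrightarrow{v}_2\times\bbr\overrightarrow{v}_3)$ together with $\mathcal{E}''(\overrightarrow{v})(\overrightarrow{v}_l,\overrightarrow{v}_l)=-2\mu_l\|v_l\|_4^4$. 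Your use of the Nehari identity to absorb the possibly large term $|\beta_{2,3}|\|u_2u_3\|_2^2$ into the diagonal makes explicit a step the paper only asserts, and is exactly the right justification.
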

\begin{proof}
The proof is standard, so we only sketch it.  By a standard argument, there exists $\beta_0>0$ such that $1\lesssim\|u_j\|_4^4$ for all $\overrightarrow{u}\in\mathcal{N}$ with $\sum_{j=1}^3\|u_j\|_{\lambda_j}^2\leq8\sum_{j=1}^3\mathcal{E}_j(w_j)$ and $j=1,2,3$ for $0<\beta_{1,2},\beta_{1,3}<\beta_0$ and $\beta_{2,3}<0$.  Thus, the matrix $\Xi=[\beta_{i,j}\|u_iu_j\|_2^2]_{i,j=1,2,3}$ is strictly diagonally dominant for $\overrightarrow{u}\in\mathcal{N}$, with $\sum_{j=1}^3\|u_j\|_{\lambda_j}^2\leq8\sum_{j=1}^3\mathcal{E}_j(w_j)$, where $\beta_{j,j}=\mu_j$.  It follows that $\Xi$ is positively definite, with $1\lesssim|\text{det}(\Xi)|$.  Thus, applying the implicit function theorem, the Ekeland variational principle and the Taylor expansion in a standard way yields that, $\mathcal{N}$ contains a $(PS)$ sequence at the least energy value $\mathcal{C}_{\mathcal{N}}$.  Since $1\lesssim|\text{det}(\Xi)|$ for $\overrightarrow{u}\in\mathcal{N}$ with $\sum_{j=1}^3\|u_j\|_{\lambda_j}^2\leq8\sum_{j=1}^3\mathcal{E}_j(w_j)$, for any positive minimizer of $\mathcal{E}(\overrightarrow{u})$ on $\mathcal{N}$, say $\overrightarrow{v}$, $\mathcal{H}=\mathcal{T}_{\overrightarrow{v}}\mathcal{N}
\bigoplus(\bbr\overrightarrow{v}_1\times\bbr\overrightarrow{v}_2\times\bbr\overrightarrow{v}_3)$, where $\mathcal{T}_{\overrightarrow{v}}\mathcal{N}$ is the tangent space of $\mathcal{N}$ as $\overrightarrow{v}$, $\overrightarrow{v}_1=(v_1,0,0)$, $\overrightarrow{v}_2=(0,v_2,0)$ and $\overrightarrow{v}_3=(0,0,v_3)$.  Since $\overrightarrow{v}$ is a positive minimizer of $\mathcal{E}(\overrightarrow{u})$ on $\mathcal{N}$, $\mathcal{E}''(\overrightarrow{v})(\overrightarrow{h},\overrightarrow{h})\geq0$ for all $\overrightarrow{h}\in\mathcal{T}_{\overrightarrow{v}}\mathcal{N}$.  It follows that the Morse index of $\overrightarrow{v}$ is less than or equal to $3$.  On the other hand, since
\begin{eqnarray*}
\mathcal{E}''(\overrightarrow{v})(\overrightarrow{v}_i,\overrightarrow{v}_i)=\|v_i\|_{\lambda_i}^2-3\mu_i\|v_i\|_4^4
-\sum_{j=1,j\not=i}^3\beta_{i,j}\|v_iv_j\|_2^2
=-2\mu_i\|v_i\|_4^4<0
\end{eqnarray*}
for all $i=1,2,3$, the Morse index of $\overrightarrow{v}$ is greater than or equal to $3$.  Thus, $\overrightarrow{v}$ is a ground state of \eqref{eqnew0001} with the Morse index 3.
\end{proof}

By Lemma~\ref{lemn0001}, to prove the existence of the ground states of \eqref{eqnew0001} with the Morse index 3 in the total-mixed case, it is sufficient to prove the existence of a positive minimizer of $\mathcal{E}(\overrightarrow{u})$ on the Nehari manifold $\mathcal{N}$.  We start by the following energy estimate.
\begin{lemma}\label{lem0001}
Let $\beta_{1,2}>0$, $\beta_{1,3}>0$ and $\beta_{2,3}<0$.  If  $\lambda_1<\min\{\lambda_2,\lambda_3\}$ then
\begin{eqnarray*}
\mathcal{C}_{\mathcal{N}}<\min\{\mathcal{C}_{\mathcal{N}_{1,2}}+\mathcal{E}_3(w_3), \mathcal{C}_{\mathcal{N}_{1,3}}+\mathcal{E}_2(w_2)\}
\end{eqnarray*}
for $\beta_{1,2},\beta_{1,3}<\beta_0$, where $\beta_0$ is given by Lemma~\ref{lemn0001}, $\mathcal{C}_{\mathcal{N}_{i,j}}$ are given by \eqref{eqn0110} and $\mathcal{C}_{\mathcal{N}}=\inf_{\mathcal{N}}\mathcal{E}(\overrightarrow{u})$.
\end{lemma}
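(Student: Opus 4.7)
The plan is to prove both halves of the inequality by carefully chosen test-function constructions. For the first inequality $\mathcal{C}_\mathcal{N} < \mathcal{C}_{\mathcal{N}_{1,2}} + \mathcal{E}_3(w_3)$, I would use the trial triple $(\varphi_1^{1,2}, \varphi_2^{1,2}, w_3(\cdot - Re_1))$, where $\overrightarrow{\varphi}^{1,2}$ is the radial minimizer on $\mathcal{N}_{1,2}$ (which exists thanks to $\beta_{1,2}<\beta_0$) and $e_1\in\bbr^N$ is a fixed unit vector; the second inequality is obtained symmetrically from $(\varphi_1^{1,3}, w_2(\cdot - Re_1), \varphi_3^{1,3})$, so I focus on the first. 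For $R$ sufficiently large, solving the Nehari constraint $\overrightarrow{\mathcal{G}}(t_1\varphi_1^{1,2}, t_2\varphi_2^{1,2}, t_3 w_3(\cdot - Re_1)) = \overrightarrow{0}$ by the implicit-function theorem around the limit $R=+\infty$, whose Jacobian is nondegenerate by the diagonal-dominance estimate already used in Lemma~\ref{lemn0001}, produces a unique triple $\overrightarrow{t}(R)$ of scalings with
\begin{align*}
t_j(R) = 1 + O(I_{1,3} + I_{2,3}), \qquad I_{i,3} := \|\varphi_i^{1,2}\,w_3(\cdot - Re_1)\|_2^2,\quad i=1,2.
\end{align*}

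The key energy identity, obtained by direct expansion and the translation invariance of $\mathcal{E}_3$, is
\begin{align*}
\mathcal{E}(\varphi_1^{1,2}, \varphi_2^{1,2}, w_3(\cdot - Re_1)) = \mathcal{C}_{\mathcal{N}_{1,2}} + \mathcal{E}_3(w_3) - \frac{\beta_{1,3}}{2}I_{1,3} - \frac{\beta_{2,3}}{2}I_{2,3}.
\end{align*}
Passing from this unscaled trial to its Nehari-projected version $(t_1(R)\varphi_1^{1,2}, t_2(R)\varphi_2^{1,2}, t_3(R) w_3(\cdot - Re_1))$ only adds a further $O(I_{1,3}^2 + I_{2,3}^2)$ correction: at $\overrightarrow{t}=(1,1,1)$ the decoupled functional $\mathcal{E}_{1,2}(t_1\varphi_1^{1,2}, t_2\varphi_2^{1,2})+\mathcal{E}_3(t_3 w_3(\cdot - Re_1))$ has vanishing gradient in all three scaling directions (these are exactly the Nehari identities for $\overrightarrow{\varphi}^{1,2}$ and $w_3$), so the $O(I_{i,3})$ deviation of $\overrightarrow{t}(R)$ from $(1,1,1)$ produces only a quadratic change in energy. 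Thus
\begin{align*}
\mathcal{C}_\mathcal{N} \leq \mathcal{C}_{\mathcal{N}_{1,2}} + \mathcal{E}_3(w_3) - \frac{\beta_{1,3}}{2}I_{1,3} - \frac{\beta_{2,3}}{2}I_{2,3} + O\bigl(I_{1,3}^2+I_{2,3}^2\bigr).
\end{align*}

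The sign of the leading correction is decided by Lemma~\ref{lemn0010} combined with the exponential decay \eqref{eqnew9999} of $\varphi_j^{1,2}$ (which matches that of $w_j$). Since $\lambda_1<\min\{\lambda_2,\lambda_3\}$, one obtains $I_{1,3} \sim R^{1-N}e^{-2\sqrt{\lambda_1}R}$, whereas $I_{2,3}$ decays at the strictly faster rate $e^{-2\min\{\sqrt{\lambda_2},\sqrt{\lambda_3}\}R}$. Therefore $I_{2,3}=o(I_{1,3})$ and $I_{1,3}^2+I_{2,3}^2=o(I_{1,3})$, so for $R$ large enough the strictly negative term $-\frac{\beta_{1,3}}{2}I_{1,3}$ (recall $\beta_{1,3}>0$) dominates every error term and yields $\mathcal{C}_\mathcal{N} < \mathcal{C}_{\mathcal{N}_{1,2}} + \mathcal{E}_3(w_3)$. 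The entirely analogous construction, with $w_2$ translated in place of $w_3$ and with $\overrightarrow{\varphi}^{1,3}$ in place of $\overrightarrow{\varphi}^{1,2}$, produces the second inequality $\mathcal{C}_\mathcal{N} < \mathcal{C}_{\mathcal{N}_{1,3}} + \mathcal{E}_2(w_2)$; in that case the dominant interaction is $\beta_{1,2}\|\varphi_1^{1,3}\,w_2(\cdot-Re_1)\|_2^2$, again driven by $\varphi_1^{1,3}$ because $\lambda_1$ is smallest.

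The main technical hurdle, beyond the clean asymptotics, is the quadratic-error claim for the Nehari projection: it requires uniform invertibility (with a quantitative bound) of the scaling Jacobian of $\overrightarrow{\mathcal{G}}$ at $\overrightarrow{t}=(1,1,1)$ for all $R$ large. This is exactly what the smallness $\beta_{1,2},\beta_{1,3}<\beta_0$ delivers via the diagonal-dominance argument of Lemma~\ref{lemn0001}, which is therefore the single ingredient used from the hypothesis other than the choice of test functions themselves.
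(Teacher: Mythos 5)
Your proposal is correct and follows essentially the same route as the paper: the same test triple $(\varphi_1^{1,2},\varphi_2^{1,2},w_3(\cdot-Re_1))$ projected onto $\mathcal{N}$, the same use of Lemma~\ref{lemn0010} together with the decay \eqref{eqnew9999} to show that the attractive interaction $\beta_{1,3}\|\varphi_1^{1,2}w_{3,R}\|_2^2\sim R^{1-N}e^{-2\sqrt{\lambda_1}R}$ dominates the repulsive one when $\lambda_1<\min\{\lambda_2,\lambda_3\}$. The only cosmetic difference is that the paper solves the Nehari constraint exactly as a linear system in $t_j^2(R)$ rather than invoking the implicit function theorem with a quadratic-error Taylor expansion, but both yield the same leading-order energy estimate.
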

\begin{proof}
We only give the proof of $\mathcal{C}_{\mathcal{N}}<\mathcal{C}_{\mathcal{N}_{1,2}}+\mathcal{E}_3(w_3)$ since the proof of the other inequality is similar.  For the sake of simplicity, we denote $\varphi^{1,2}_j$ by $\varphi_j$, where $\overrightarrow{\varphi}^{1,2}=(\varphi^{1,2}_1,\varphi^{1,2}_2)$ is a ground state of \eqref{eqn0099} for $(i,j)=(1,2)$.  Let $w_{3,R}=w_3(x-Re_1)$ where $e_1\in\bbr^N$ satisfying $|e_1|=1$.  We consider the following system
\begin{eqnarray}\label{eqnew0011}
\left\{\aligned&\|\varphi_1\|_{\lambda_1}^2=\mu_1\|\varphi_1\|_4^4t_1^2(R)+\beta_{1,2}\|\varphi_1\varphi_2\|_2^2t_2^2(R)
+\beta_{1,3}\|\varphi_1w_{3,R}\|_2^2t_3^2(R),\\
&\|\varphi_2\|_{\lambda_2}^2=\mu_2\|\varphi_2\|_4^4t_2^2(R)+\beta_{1,2}\|\varphi_1\varphi_2\|_2^2t_1^2(R)
+\beta_{2,3}\|\varphi_2w_{3,R}\|_2^2t_3^2(R),\\
&\|w_3\|_{\lambda_3}^2=\mu_3\|w_3\|_4^4t_3^2(R)+\beta_{1,3}\|\varphi_1w_{3,R}\|_2^2t_1^2(R)
+\beta_{2,3}\|\varphi_2w_{3,R}\|_2^2t_2^2(R).
\endaligned\right.
\end{eqnarray}
Clearly, $\{t_j(R)\}$, $j=1,2,3$, are bounded for sufficiently large $R>0$ and $t_j(R)\to1$ as $R\to+\infty$.  Moreover, since $\|\varphi_jw_{3,R}\|_2^2\to0$ as $R\to+\infty$ for $j=1,2$, by taking $\beta_0$ in Lemma~\ref{lemn0001} sufficiently small if necessary, the above linear system is uniquely solvable for $\beta_{1,2}<\beta_0$.  Its unique solution $(t_1^2(R),t_2^2(R),t_3^2(R))$ is given by
\begin{eqnarray*}
t_j^2(R)=1-\frac{(1+o_R(1))(\beta_{j,3}\|\varphi_jw_{3,R}\|_2^2\mu_i\|\varphi_i\|_4^4
-\beta_{i,3}\|\varphi_iw_{3,R}\|_2^2\beta_{1,2}\|\varphi_1\varphi_2\|_2^2)}
{\prod_{l=1}^2\mu_l\|\varphi_l\|_4^4-\beta_{1,2}^2\|\varphi_1\varphi_2\|_2^4}
\end{eqnarray*}
for $(i,j)$ equals to $(1,2)$ or $(2,1)$ and
\begin{eqnarray*}
t_3^2(R)=1-\frac{1+o_R(1)}{\mu_3\|w_{3}\|_4^4}(\beta_{1,3}\|\varphi_1w_{3,R}\|_2^2+\beta_{2,3}\|\varphi_2w_{3,R}\|_2^2).
\end{eqnarray*}
Here, $o_R(1)\to0$ as $R\to+\infty$.  Since $\beta_{1,2}>0$, \eqref{eqnew9999} holds for $\varphi_j$, $j=1,2$.  Thus, by Lemma~\ref{lemn0010} and $\lambda_1<\min\{\lambda_2,\lambda_3\}$,
\begin{eqnarray}\label{eqnew8989}
\|\varphi_1w_{3,R}\|_2^2\sim R^{1-N}e^{-2\sqrt{\lambda_1}R}\quad\text{as }R\to+\infty.
\end{eqnarray}
By Lemma~\ref{lemn0010} once more, as $R\to+\infty$,
\begin{eqnarray}\label{eqnew8832}
\|\varphi_2w_{3,R}\|_2^2\sim \left\{\aligned R^{1-N}e^{-2\min\{\sqrt{\lambda_2}, \sqrt{\lambda_3}\}R},\quad \lambda_2\not=\lambda_3;\\
R^{1+\alpha-N}e^{-2\sqrt{\lambda}R},\quad \lambda_2=\lambda_3=\lambda,\endaligned
\right.
\end{eqnarray}
where $\alpha=1$ for $N=1$ and $\alpha=\frac{1}{2}$ for $N=2,3$.
Since $(t_1(R),t_2(R),t_3(R))$ satisfies \eqref{eqnew0011}, we can test $\mathcal{C}_{\mathcal{N}}$ by
$$
(t_1(R)\varphi_1,t_2(R)\varphi_2,t_3(R)w_{3,R})
$$
and estimate it by \eqref{eqnew8989} as follows:
\begin{eqnarray}
\mathcal{C}_{\mathcal{N}}&\leq&\frac{1}{4}(\sum_{j=1}^2t_j^2(R)\|\varphi_j\|_{\lambda_j}^2+t_3^2(R)\|w_{3,R}\|_{\lambda_3}^2)\notag\\
&\leq&\mathcal{C}_{\mathcal{N}_{1,2}}+\mathcal{E}_3(w_3)-C\beta_{1,3}R^{1-N}e^{-2\sqrt{\lambda_1}R}
-C'\beta_{2,3}\|\varphi_2w_{3,R}\|_2^2\label{eqnew0012}
\end{eqnarray}
By \eqref{eqnew8832} and taking $R>0$ sufficiently large in \eqref{eqnew0012}, it follows from $\lambda_1<\min\{\lambda_2,\lambda_3\}$ that
\begin{eqnarray*}
\mathcal{C}_{\mathcal{N}}<\mathcal{C}_{\mathcal{N}_{1,2}}+\mathcal{E}_3(w_3),\label{eq0003}
\end{eqnarray*}
which completes the proof.
\end{proof}

\begin{remark}\label{rmk0001}
As that in the proof of Lemma~\ref{lem0001}, if we use $(w_i,w_j)$ as a test function of $\mathcal{C}_{\mathcal{N}_{i,j}}$ where $(i,j)$ equals to $(1,2)$ or $(1,3)$, then by taking $\beta_0>0$ sufficiently small if necessary,
\begin{eqnarray*}
\mathcal{C}_{\mathcal{N}_{i,j}}\leq\mathcal{E}_i(w_i)+\mathcal{E}_j(w_j)-\frac{\beta_{i,j}}{2}\|w_iw_j\|_2^2+O(\beta_{i,j}^2)
\end{eqnarray*}
for $0<\beta_{i,j}<\beta_0$.
\end{remark}

Now, we are prepared to prove the following existence result.
\begin{proposition}\label{prop0001}
Let $\beta_{1,2}>0$, $\beta_{1,3}>0$ and $\beta_{2,3}<0$.  If  $\lambda_1<\min\{\lambda_2,\lambda_3\}$ then there exists a positive minimizer of $\mathcal{E}(\overrightarrow{u})$ on $\mathcal{N}$ for $\beta_{1,2},\beta_{1,3}<\beta_0$, where $\beta_0$ is given by Lemma~\ref{lemn0001}.  That is, \eqref{eqnew0001} has a ground state with the Morse index 3.
\end{proposition}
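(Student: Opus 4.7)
The plan is to produce a positive minimizer of $\mathcal{E}$ on $\mathcal{N}$ via concentration--compactness applied to the $(PS)_{\mathcal{C}_{\mathcal{N}}}$ sequence supplied by Lemma~\ref{lemn0001}. Let $\{\overrightarrow{u}_n\}\subset\mathcal{N}$ be such a sequence; replacing $u_{j,n}$ by $|u_{j,n}|$ if necessary, we may assume every component is nonnegative. The Nehari identity on $\mathcal{N}$ gives $\mathcal{E}(\overrightarrow{u}_n)=\frac14\sum_{j=1}^{3}\|u_{j,n}\|_{\lambda_j}^2$, so $\{\overrightarrow{u}_n\}$ is bounded in $\mathcal{H}$. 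Passing to a subsequence, $\overrightarrow{u}_n\rightharpoonup\overrightarrow{u}_0$ weakly in $\mathcal{H}$ and a.e.\ on $\bbr^N$; then $\mathcal{E}'(\overrightarrow{u}_0)=\overrightarrow{0}$ and each component of $\overrightarrow{u}_0$ is nonnegative.

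Next I would apply the vectorial profile decomposition to $\overrightarrow{v}_n=\overrightarrow{u}_n-\overrightarrow{u}_0$: there exist finitely many nontrivial profiles $\overrightarrow{\phi}^{1},\dots,\overrightarrow{\phi}^{L}\in\mathcal{H}$ and sequences $\{y_n^{\ell}\}\subset\bbr^N$ with $|y_n^{\ell}|\to\infty$ and $|y_n^{\ell}-y_n^{\ell'}|\to\infty$ for $\ell\neq\ell'$, such that $\overrightarrow{v}_n(\cdot+y_n^{\ell})\rightharpoonup\overrightarrow{\phi}^{\ell}$, each $\overrightarrow{\phi}^{\ell}$ is a nonnegative critical point of $\mathcal{E}$, and
\begin{eqnarray*}
\mathcal{C}_{\mathcal{N}}=\mathcal{E}(\overrightarrow{u}_0)+\sum_{\ell=1}^{L}\mathcal{E}(\overrightarrow{\phi}^{\ell}).
\end{eqnarray*}
The cross terms $\beta_{i,j}\|u_iu_j\|_2^2$ split under a Brezis--Lieb argument thanks to the mutual separation of centers, so each nonzero piece in the decomposition lies on the Nehari manifold of the full system or of some subsystem and the total Nehari constraint splits accordingly.

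The decisive step is to show $L=0$ and every component of $\overrightarrow{u}_0$ is nontrivial. If some $\overrightarrow{\phi}^{\ell}$ has all three components nonzero then it is itself a positive critical point of \eqref{eqnew0001}, so $\mathcal{E}(\overrightarrow{\phi}^{\ell})\geq\mathcal{C}_{\mathcal{N}}$; the energy identity then forces $\overrightarrow{u}_0\equiv\overrightarrow{0}$, $L=1$, and $\overrightarrow{\phi}^{1}$ itself is a minimizer (after translation), which already gives the conclusion. Otherwise every surviving piece has at least one vanishing component and is either an attractive two-coupled ground state of $(u_1,u_2)$- or $(u_1,u_3)$-type (contributing $\mathcal{C}_{\mathcal{N}_{1,2}}$ or $\mathcal{C}_{\mathcal{N}_{1,3}}$), a single scalar bubble (contributing $\mathcal{E}_j(w_j)$), or a would-be $(u_2,u_3)$-piece whose total contribution is $\geq\mathcal{E}_2(w_2)+\mathcal{E}_3(w_3)$ because $\beta_{2,3}<0$ (the repulsive two-coupled infimum is unattained and dominated by two scalar bubbles). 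Enumerating the finitely many configurations and using Remark~\ref{rmk0001} to bound $\mathcal{C}_{\mathcal{N}_{i,j}}\leq\mathcal{E}_i(w_i)+\mathcal{E}_j(w_j)$ for small $\beta_{i,j}$, every nontrivial splitting has total energy at least
\begin{eqnarray*}
\min\{\mathcal{C}_{\mathcal{N}_{1,2}}+\mathcal{E}_3(w_3),\ \mathcal{C}_{\mathcal{N}_{1,3}}+\mathcal{E}_2(w_2)\},
\end{eqnarray*}
which strictly exceeds $\mathcal{C}_{\mathcal{N}}$ by Lemma~\ref{lem0001}. The contradiction forces $L=0$ and each $u_{0,j}\not\equiv 0$; hence $\overrightarrow{u}_n\to\overrightarrow{u}_0$ strongly in $\mathcal{H}$, $\overrightarrow{u}_0\in\mathcal{N}$, and $\mathcal{E}(\overrightarrow{u}_0)=\mathcal{C}_{\mathcal{N}}$.

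Finally, the strong maximum principle applied to each scalar inequality $-\Delta u_{0,j}+\lambda_j u_{0,j}\geq 0$ upgrades $u_{0,j}\geq 0$, $u_{0,j}\not\equiv 0$ to $u_{0,j}>0$ in $\bbr^N$, so $\overrightarrow{u}_0$ is a positive minimizer of $\mathcal{E}$ on $\mathcal{N}$, and Lemma~\ref{lemn0001} then identifies it as a ground state of \eqref{eqnew0001} with Morse index~$3$. The main obstacle in executing this plan is the bookkeeping in the profile decomposition under mixed couplings: one must verify that every surviving blob is either the full system, an attractive two-coupled sub-ground-state, or a scalar bubble, so that the energy accounting always reduces to configurations strictly controlled by the fine energy inequality of Lemma~\ref{lem0001}; once this is in hand, the remaining steps are standard.
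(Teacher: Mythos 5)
Your proposal is correct and follows essentially the same route as the paper: both hinge on applying concentration--compactness to the $(PS)$ sequence from Lemma~\ref{lemn0001} and excluding every splitting configuration by the strict energy inequality of Lemma~\ref{lem0001} (using that $\mathcal{C}_{\mathcal{N}_{2,3}}=\mathcal{E}_2(w_2)+\mathcal{E}_3(w_3)$ when $\beta_{2,3}<0$ and $\mathcal{C}_{\mathcal{N}_{i,j}}<\mathcal{E}_i(w_i)+\mathcal{E}_j(w_j)$ when $\beta_{i,j}>0$), followed by Harnack/maximum principle for positivity; the paper merely carries out the bookkeeping by hand with component-wise translations $y_{j,n}$ instead of invoking a packaged vectorial profile decomposition. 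The only cosmetic slip is that taking $|u_{j,n}|$ at the outset need not preserve the $(PS)$ property --- take absolute values of the limit instead, as the paper does.
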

\begin{proof}
By Lemma~\ref{lemn0001}, there exists a $(PS)$ sequence $\{\overrightarrow{u}_n\}$ at the least energy value $\mathcal{C}_{\mathcal{N}}$.  Clearly, $\{\overrightarrow{u}_n\}$ is bounded in $\mathcal{H}$.  Since $1\lesssim\|u_{j,n}\|_4$ for all $j=1,2,3$, by the Lions lemma and the Sobolev embedding theorem, there exist $\{y_{j,n}\}\subset\bbr^N$ such that $u_{j,n}(x+y_{j,n})\rightharpoonup v_{j,\infty}\not=0$ weakly in $H^1(\bbr^N)$ as $n\to\infty$.  We denote $v_{i,j,n}=u_{i,n}(x+y_{j,n})$.  Then, $v_{i,j,n}\rightharpoonup v_{i,j,\infty}$ weakly in $H^1(\bbr^N)$ as $n\to\infty$.  Moreover, $v_{j,j,\infty}=v_{j,\infty}\not=0$ for all $j=1,2,3$.  Since $\{\overrightarrow{u}_n\}$ is a $(PS)$ sequence, it is standard to show that $\overrightarrow{v}_{j,\infty}=(v_{1,j,\infty},v_{2,j,\infty},v_{3,j,\infty})$ is a critical point of $\mathcal{E}(\overrightarrow{u})$ for all $j=1,2,3$.  If for every $j=1,2,3$, we always have $v_{i,j,\infty}=0$ with $i\not=j$, then,
$$
\mathcal{C}_{\mathcal{N}}=\sum_{j=1}^3\frac14\|u_{j,n}\|_{\lambda_j}^2+o_n(1)=\sum_{j=1}^3\frac14\|v_{j,j,n}\|_{\lambda_j}^2+o_n(1)
\geq\sum_{j=1}^3\mathcal{E}_j(w_j)+o_n(1),
$$
which contradicts Lemma~\ref{lem0001} and Remark~\ref{rmk0001} by taking $\beta_0>0$ sufficiently small if necessary.  Thus, without loss of generality, we assume that for $j=1$, one of the following cases must happen:
\begin{enumerate}
\item[$(1)$]  $v_{1,1,\infty}\not=0$, $v_{2,1,\infty}\not=0$ and $v_{3,1,\infty}=0$.
\item[$(2)$]  $v_{1,1,\infty}\not=0$, $v_{2,1,\infty}=0$ and $v_{3,1,\infty}\not=0$.
\item[$(3)$]  $v_{1,1,\infty}\not=0$, $v_{2,1,\infty}\not=0$ and $v_{3,1,\infty}\not=0$.
\end{enumerate}
We first consider the case~$(1)$.  Clearly, $(v_{1,1,\infty}, v_{2,1,\infty})$ is a nontrivial critical point of $\mathcal{E}_{1,2}(\overrightarrow{\phi})$, where $\mathcal{E}_{1,2}(\overrightarrow{\phi})$ is given by \eqref{eqn0104}.  Note that for $j=3$, one of the following cases must happen:
\begin{enumerate}
\item[$(i)$] $v_{1,3,\infty}\not=0$, $v_{2,3,\infty}=0$ and $v_{3,3,\infty}\not=0$.
\item[$(ii)$]  $v_{1,3,\infty}=0$, $v_{2,3,\infty}\not=0$ and $v_{3,3,\infty}\not=0$.
\item[$(iii)$]  $v_{1,3,\infty}=0$, $v_{2,3,\infty}=0$ and $v_{3,3,\infty}\not=0$.
\item[$(iv)$]  $v_{1,3,\infty}\not=0$, $v_{2,3,\infty}\not=0$ and $v_{3,3,\infty}\not=0$.
\end{enumerate}
If the case~$(iv)$ happens, then by a standard argument, $\mathcal{C}_{\mathcal{N}}$ is attained by $\overrightarrow{\hat{v}}_{3,\infty}=(|v_{1,3,\infty}|,|v_{2,3,\infty}|,|v_{3,3,\infty}|)$, which, together with the Harnack inequality and the fact that $\mathcal{N}$ is a natural constraint, implies that there exists a positive minimizer of $\mathcal{E}(\overrightarrow{u})$ on $\mathcal{N}$.  Thus, by Lemma~\ref{lemn0001}, \eqref{eqnew0001} has a ground state with the Morse index 3.  Therefore, without loss of generality, we assume that one of the cases~$(i)$--$(iii)$ must happen in what follows.
Since $v_{3,3,\infty}\not=0$ and $v_{3,3,n}(x)=v_{3,1,n}(x+y_{3,n}-y_{1,n})$,
by the Sobolev embedding theorem, $|y_{3,n}-y_{1,n}|\to+\infty$ as $n\to\infty$.  It follows that for every $R>0$,
\begin{eqnarray*}
\int_{\bbr^N}|v_{1,1,n}|^4dx&\geq&\int_{B_R(0)}|v_{1,1,n}|^4dx+\int_{B_R(y_{3,n}-y_{1,n})}|v_{1,1,n}|^4dx\\
&=&\int_{B_R(0)}|v_{1,1,n}|^4dx+\int_{B_R(0)}|v_{1,3,n}|^4dx.
\end{eqnarray*}
By letting $n\to\infty$ first and $R\to+\infty$ next,
$$
\|v_{1,1,n}\|_4^4\geq\|v_{1,1,\infty}\|_4^4+\|v_{1,3,\infty}\|_4^4+o_n(1).
$$
If the case~$(i)$ happens, then $(v_{1,3,\infty},v_{3,3,\infty})$ is a nontrivial critical point of $\mathcal{E}_{1,3}(\overrightarrow{\phi})$, where $\mathcal{E}_{1,3}(\overrightarrow{\phi})$ is given by \eqref{eqn0104}.  Since it is standard to show that $\|v_{j,3,\infty}\|_4^4\geq\|w_j\|_4^4+o_{\beta_0}(1)$ for sufficiently small $\beta_0$,
\begin{eqnarray*}
\mathcal{C}_{\mathcal{N}}&=&\frac{1}{4}\sum_{j=1}^3\mu_j\|u_{j,n}\|_4^4+\frac{1}{2}\sum_{i=1,i<j}^3\beta_{i,j}\|u_{i,n}u_{j,n}\|_2^2+o_n(1)\\
&\geq&\frac{1}{4}\sum_{j=1}^2\mu_j\|v_{j,1,n}\|_4^4+\frac{\mu_3}{4}\|v_{3,3,n}\|_4^4+o_{\beta_0}(1)+o_n(1)\\
&\geq&\mathcal{C}_{\mathcal{N}_{1,2}}+\mathcal{E}_3(w_3)+\frac{\mu_1}{4}\|w_1\|_4^4+o_{\beta_0}(1)+o_n(1),
\end{eqnarray*}
which contradicts Lemma~\ref{lem0001} for $\beta_{i,j}<\beta_0$ by taking $\beta_0>0$ sufficiently small if necessary.  Here, $o_{\beta_0}(1)\to0$ as $\beta_0\to0$.  The case~$(iii)$ is also impossible since in this case,
\begin{eqnarray*}
\mathcal{C}_{\mathcal{N}}&=&\frac{1}{4}\sum_{j=1}^3\|u_{j,n}\|_{\lambda_j}^2+o_n(1)\\
&=&\frac{1}{4}\sum_{j=1}^2\|v_{j,1,n}\|_{\lambda_j}^2+\frac{1}{4}\|v_{3,3,n}\|_{\lambda_j}^2+o_n(1)\\
&\geq&\mathcal{C}_{\mathcal{N}_{1,2}}+\mathcal{E}_3(w_3)+o_n(1),
\end{eqnarray*}
which still contradicts Lemma~\ref{lem0001}.  Thus, we must have the case~$(ii)$.  If $|y_{1,n}-y_{2,n}|\lesssim1$, then by $|y_{1,n}-y_{3,n}|\to+\infty$ as $n\to\infty$, $|y_{2,n}-y_{3,n}|\to+\infty$ as $n\to\infty$.  It follows from
\begin{eqnarray*}
v_{2,2,n}(x)=v_{2,3,n}(x+y_{2,n}-y_{3,n})
\end{eqnarray*}
that $\|v_{2,3,n}\|_4^4\geq\|v_{2,3,\infty}\|_4^4+\|v_{2,2,\infty}\|_4^4+o_n(1)$.  Then by a similar calculation used in the above arguments,
\begin{eqnarray*}
\mathcal{C}_{\mathcal{N}}\geq\mathcal{C}_{\mathcal{N}_{2,3}}+\mathcal{E}_1(w_1)+o_{\beta_0}(1)+\frac{\mu_2}{4}\|w_2\|_4^4+o_n(1).
\end{eqnarray*}
Since $\beta_{2,3}<0$, it is well known that $\mathcal{C}_{\mathcal{N}_{2,3}}=\sum_{j=2}^3\mathcal{E}_j(w_j)$.  Thus, it is impossible for sufficiently small $\beta_0>0$, owing to Lemma~\ref{lem0001}.  It remains to exclude the case $|y_{1,n}-y_{2,n}|\to+\infty$ as $n\to\infty$.  In this case, it follows from
\begin{eqnarray*}
v_{2,2,n}(x)=v_{2,1,n}(x+y_{2,n}-y_{1,n})
\end{eqnarray*}
that $\|v_{2,1,n}\|_4^4\geq\|v_{2,1,\infty}\|_4^4+\|v_{2,2,\infty}\|_4^4+o_n(1)$.  Similarly,
\begin{eqnarray*}
\mathcal{C}_{\mathcal{N}}\geq\mathcal{C}_{\mathcal{N}_{1,2}}+\mathcal{E}_3(w_3)+o_{\beta_0}(1)+\frac{\mu_2}{4}\|w_2\|_4^4+o_n(1).
\end{eqnarray*}
It is also impossible for sufficiently small $\beta_0>0$, owing to Lemma~\ref{lem0001}.  Thus, the case~$(1)$ can not happen.  Similarly, we can show that the case~$(2)$ can not happen either, which implies the case~$(3)$ must happen.  Now, by a standard argument, $\mathcal{C}_{\mathcal{N}}$ is attained by $\overrightarrow{\hat{v}}_{1,\infty}=(|v_{1,1,\infty}|,|v_{2,1,\infty}|,|v_{3,1,\infty}|)$.  Thus, by the Harnack inequality and  Lemma~\ref{lemn0001}, \eqref{eqnew0001} has a ground state with the Morse index 3.
\end{proof}

\subsection{Ground states with the Morse index 2}
In this section, we shall study the existence of the ground states of \eqref{eqnew0001} with the Morse index 2, in the total-mixed case~$(d)$: $\beta_{1,2}>0$, $\beta_{1,3}>0$ and $\beta_{2,3}<0$.  Let
\begin{eqnarray*}
\mathcal{M}_{12,3}=\{\overrightarrow{u}\in\widehat{\mathcal{H}}_{12,3}\mid \overrightarrow{\mathcal{\widehat{Q}}}_{12,3}(u)=(\mathcal{G}_1(\overrightarrow{u})+\mathcal{G}_2(\overrightarrow{u}), \mathcal{G}_3(\overrightarrow{u}))=\overrightarrow{0}\},
\end{eqnarray*}
where $\mathcal{G}_j(\overrightarrow{u})=\|u_j\|_{\lambda_j}^2-\mu_j\|u_j\|_4^4-\sum_{i=1,i\not=j}^3\beta_{i,j}\|u_iu_j\|_2^2$ and $\widehat{\mathcal{H}}_{12,3}=((\mathcal{H}_1\times\mathcal{H}_2)\backslash\{\overrightarrow{0}\})\times(\mathcal{H}_3\backslash\{0\})$.
Let
\begin{eqnarray*}
\mathcal{C}_{\mathcal{M}_{12,3}}=\inf_{\mathcal{M}_{12,3}}\mathcal{E}(\overrightarrow{u}).
\end{eqnarray*}
Then, $\mathcal{C}_{\mathcal{M}_{12,3}}$ is well defined and nonnegative.  Using $(0,w_2,w_{3,R})$ as a test function and calculating similarly in the proof of \cite[Theorem~1]{LW05} yields
\begin{eqnarray}\label{eqnew0097}
\mathcal{C}_{\mathcal{M}_{12,3}}\leq\sum_{j=2}^3\mathcal{E}_j(w_j).
\end{eqnarray}
\begin{lemma}\label{lemn0002}
There exists $\beta_0>0$ such that $\mathcal{M}_{12,3}$ contains a $(PS)$ sequence at the least energy value $\mathcal{C}_{\mathcal{M}_{12,3}}$ for $\beta_{1,2}>0$, $0<\beta_{1,3}<\beta_0$ and $\beta_{2,3}<0$.
Moreover, any positive minimizer of $\mathcal{E}(\overrightarrow{u})$ on $\mathcal{M}_{12,3}$ is a ground state of \eqref{eqnew0001} with the Morse index 2.
\end{lemma}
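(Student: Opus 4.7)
The plan is to follow the same strategy as in Lemma~\ref{lemn0001}, with two structural changes: the constraint map $\overrightarrow{\mathcal{\widehat{Q}}}_{12,3}$ has only two components instead of three, so the relevant non-degeneracy matrix becomes $2\times 2$; and correspondingly the tangent space $T_{\overrightarrow{v}}\mathcal{M}_{12,3}$ has codimension $2$, which pins down the Morse index.

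First I would set up the natural two-parameter scaling $(u_1,u_2,u_3)\mapsto(\sqrt{t_1}\,u_1,\sqrt{t_1}\,u_2,\sqrt{t_2}\,u_3)$, which respects the structure of $\overrightarrow{\mathcal{\widehat{Q}}}_{12,3}$ (both components $\mathcal{G}_1+\mathcal{G}_2$ and $\mathcal{G}_3$ become polynomials in $(t_1,t_2)$). Differentiating at $(t_1,t_2)=(1,1)$ on the manifold yields the Jacobian
\[
J=-\begin{pmatrix}\alpha & \gamma\\ \gamma & \mu_3\|u_3\|_4^4\end{pmatrix},
\]
where $\alpha=\mu_1\|u_1\|_4^4+\mu_2\|u_2\|_4^4+2\beta_{1,2}\|u_1u_2\|_2^2$ and $\gamma=\beta_{1,3}\|u_1u_3\|_2^2+\beta_{2,3}\|u_2u_3\|_2^2$. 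The crucial analytic step is to show $\det J=\alpha\mu_3\|u_3\|_4^4-\gamma^2\gtrsim 1$ uniformly on the sublevel set $\{\overrightarrow{u}\in\mathcal{M}_{12,3}:\mathcal{E}(\overrightarrow{u})\leq 8\sum_{j=2}^3\mathcal{E}_j(w_j)\}$; the Sobolev embedding, combined with $\mathcal{G}_1+\mathcal{G}_2=0$, $\mathcal{G}_3=0$, $\beta_{1,2}>0$ and $\beta_{1,3}<\beta_0$, provides lower bounds $\|u_j\|_4\gtrsim 1$ (both for $j=1,2$ in a suitable combined sense and for $j=3$), and then a careful comparison dominates $\gamma^2$ by $\alpha\mu_3\|u_3\|_4^4$. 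Once the non-degeneracy of $J$ is in hand, the implicit function theorem shows $\mathcal{M}_{12,3}$ is a $C^1$-Banach manifold near this sublevel set, and Ekeland's variational principle produces a minimizing sequence $\{\overrightarrow{u}_n\}$ whose gradient restricted to $T_{\overrightarrow{u}_n}\mathcal{M}_{12,3}$ tends to $0$. Writing the resulting Lagrange multiplier identity $\mathcal{E}'(\overrightarrow{u}_n)=\kappa_1^n(\mathcal{G}_1'+\mathcal{G}_2')(\overrightarrow{u}_n)+\kappa_2^n\mathcal{G}_3'(\overrightarrow{u}_n)+o_n(1)$ and testing against $(u_{1,n},u_{2,n},0)$ and $(0,0,u_{3,n})$ gives a linear system for $(\kappa_1^n,\kappa_2^n)$ whose coefficient matrix is precisely $J$; invertibility of $J$ then forces $\kappa_1^n,\kappa_2^n\to 0$, and hence $\mathcal{E}'(\overrightarrow{u}_n)\to 0$ in $\mathcal{H}^{-1}$, which is the desired $(PS)$ sequence.

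For the Morse index of a positive minimizer $\overrightarrow{v}$, the invertibility of $J$ gives the decomposition $\mathcal{H}=T_{\overrightarrow{v}}\mathcal{M}_{12,3}\oplus\mathrm{span}\{\vec{h}_1,\vec{h}_2\}$ with $\vec{h}_1=(v_1,v_2,0)$ and $\vec{h}_2=(0,0,v_3)$. Minimality forces $\mathcal{E}''(\overrightarrow{v})\geq 0$ on $T_{\overrightarrow{v}}\mathcal{M}_{12,3}$, so the Morse index of $\overrightarrow{v}$ in $\mathcal{H}$ is at most $2$. A direct computation using $\mathcal{G}_1+\mathcal{G}_2=0$ and $\mathcal{G}_3=0$ reduces the Hessian $\mathcal{E}''(\overrightarrow{v})$ restricted to $\mathrm{span}\{\vec{h}_1,\vec{h}_2\}$ to the matrix $2J$, which is negative definite exactly when $\det J>0$; this produces a two-dimensional negative subspace and forces the Morse index to equal $2$. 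Finally, every positive solution of \eqref{eqnew0001} lies in $\mathcal{N}\subset\mathcal{M}_{12,3}$, so the chain $\mathcal{C}_{\mathcal{M}_{12,3}}\leq\mathcal{C}_{\mathcal{N}}\leq\mathcal{E}(\overrightarrow{v})=\mathcal{C}_{\mathcal{M}_{12,3}}$ collapses and such a minimizer is a genuine ground state.

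The main obstacle is the non-degeneracy estimate $\det J\gtrsim 1$. In Lemma~\ref{lemn0001} this was a clean consequence of strict diagonal dominance of a $3\times 3$ matrix for small positive $\beta_{i,j}$, but here the off-diagonal entry $\gamma$ contains the fixed negative term $\beta_{2,3}\|u_2u_3\|_2^2$, which is \emph{not} small, so simple diagonal dominance is unavailable. Instead one must exploit the enhanced positivity of $\alpha$ (enlarged by the $+\beta_{1,2}>0$ contribution) together with $\mu_3\|u_3\|_4^4\gtrsim 1$ and the smallness of $\beta_{1,3}$, and use the constraint $\mathcal{G}_3=0$ to transfer information between $\gamma$ and the diagonal, in order to beat $\gamma^2$ by $\alpha\mu_3\|u_3\|_4^4$ with uniform room to spare; everything else follows the pattern already set in Lemma~\ref{lemn0001}.
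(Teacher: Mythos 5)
Your proposal is correct and follows essentially the same route as the paper's (sketched) proof: the same $2\times 2$ constraint matrix (the paper's $\Upsilon$, which is your $-J$), the same implicit-function/Ekeland/Lagrange-multiplier argument for producing the $(PS)$ sequence, and the same decomposition $\mathcal{H}=\mathcal{T}_{\overrightarrow{v}}\mathcal{M}_{12,3}\bigoplus(\bbr(v_1,v_2,0)\times\bbr(0,0,v_3))$ with restricted Hessian $-2\Upsilon$ for the Morse index. The only difference is presentational: the paper gets $|\det\Upsilon|\gtrsim 1$ from strict diagonal dominance, which — contrary to the obstacle you flag — does hold here, since $\mathcal{G}_3(\overrightarrow{u})=0$ gives $-\gamma=\mu_3\|u_3\|_4^4-\|u_3\|_{\lambda_3}^2<\mu_3\|u_3\|_4^4$ and $\mathcal{G}_1(\overrightarrow{u})+\mathcal{G}_2(\overrightarrow{u})=0$ gives $-\gamma=\alpha-\|u_1\|_{\lambda_1}^2-\|u_2\|_{\lambda_2}^2<\alpha$, while $\gamma\leq C\beta_0$ handles the other sign; your direct determinant comparison amounts to the same estimate.
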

\begin{proof}
The proof is similar to that of \cite[Lemma~2.1]{SW15}, so we only sketch it.  By \eqref{eqnew0097},
\begin{eqnarray*}
\mathcal{M}^*_{12,3}=\{\overrightarrow{u}\in\mathcal{M}_{12,3}\mid\sum_{j=1}^3\|u_{j}\|_{\lambda_j}^2\leq8\sum_{j=2}^3\mathcal{E}_j(w_j)\}
\not=\emptyset.
\end{eqnarray*}
Moreover, since $\beta_{2,3}<0$, there exists $\beta_0>0$ such that
\begin{eqnarray}\label{eqnew0021}
\min\{\sum_{j=1}^2\mu_j\|u_j\|_4^4+2\beta_{1,2}\|u_1u_2\|_2^2, \|u_{3}\|_{\lambda_3}^2\}\geq C_{\beta_{1,2}}>0
\end{eqnarray}
for all $\overrightarrow{u}\in\mathcal{M}^*_{12,3}$ with $\beta_{1,3}<\beta_0$, where $C_{\beta_{1,2}}$ is a constant only depending on $\beta_{1,2}$.  It follows that
\begin{eqnarray*}
\Upsilon=\left(\aligned &\sum_{j=1}^2\mu_j\|u_j\|_4^4+2\beta_{1,2}\|u_1u_2\|_2^2\quad &\sum_{j=1}^2\beta_{j,3}\|u_ju_3\|_2^2\\
&\sum_{j=1}^2\beta_{j,3}\|u_ju_3\|_2^2\quad&\mu_3\|u_3\|_4^4\endaligned\right)
\end{eqnarray*}
is strictly diagonally dominant and $|\text{det}(\Upsilon)|\geq C'_{\beta_{1,2}}>0$ for $\overrightarrow{u}\in\mathcal{M}^*_{12,3}$.
Here, $C'_{\beta_{1,2}}$ is also a constant only depending on $\beta_{1,2}$.
Now, we can follow the argument in the proof of \cite[Lemma~2.1]{SW15} to obtain a $(PS)$ sequence at the least energy value $\mathcal{C}_{\mathcal{M}_{12,3}}$ for $\beta_{1,2}>0$, $0<\beta_{1,3}<\beta_0$ and $\beta_{2,3}<0$.  For the Morse index, the proof is similar to that of Lemma~\ref{lemn0001} since we have $\mathcal{H}=\mathcal{T}_{\overrightarrow{v}}\mathcal{M}
\bigoplus(\bbr\overrightarrow{v}_{1,2}\times\bbr\overrightarrow{v}_3)$ for any positive minimizer of $\mathcal{E}(\overrightarrow{u})$ on $\mathcal{M}_{12,3}$ now, where $\overrightarrow{v}_{1,2}=(v_1,v_2,0)$ and $\overrightarrow{v}_3=(0,0,v_3)$.
\end{proof}

By Lemma~\ref{lemn0002}, to prove the existence of the ground states of \eqref{eqnew0001} with the Morse index 2, it is sufficient to prove the existence of a positive minimizer of $\mathcal{E}(\overrightarrow{u})$ on $\mathcal{M}_{12,3}$.  Let
\begin{eqnarray}
\overline{\beta}_{1,2}=\max\bigg\{\inf_{u\in H^1(\bbr^N)\backslash\{0\}}\frac{\|u\|_{\lambda_2}^2}{\|w_1u\|_2^2},
\inf_{u\in H^1(\bbr^N)\backslash\{0\}}\frac{\|u\|_{\lambda_1}^2}{\|w_2u\|_2^2}\bigg\}\label{eqn0096}
\end{eqnarray}
where $w_j$ is the unique solution of \eqref{eqnew0013}.
\begin{lemma}\label{lem0003}
Let $\beta_{1,2}>0$, $0<\beta_{1,3}<\beta_0$ and $\beta_{2,3}<0$, where $\beta_0$ is given by Lemma~\ref{lemn0002}.  If $\lambda_1<\min\{\lambda_2,\lambda_3\}$, then
\begin{eqnarray*}
\mathcal{C}_{\mathcal{M}_{12,3}}<\mathcal{C}_{\mathcal{N}_{1,2}}+\mathcal{E}_3(w_3)
\end{eqnarray*}
for $\beta_{1,2}>\overline{\beta}_{1,2}$, where $\mathcal{C}_{\mathcal{N}_{1,2}}$ is given by \eqref{eqn0110}.
\end{lemma}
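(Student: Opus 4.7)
\emph{Proof proposal for Lemma \ref{lem0003}.} The plan is to construct a test function for $\mathcal{C}_{\mathcal{M}_{12,3}}$ by combining a ground state $\overrightarrow{\varphi}^{1,2}=(\varphi_1,\varphi_2)$ of the attractive two-coupled subsystem with a far-away translate $w_{3,R}(x)=w_3(x-Re_1)$ of the third scalar ground state, and then to show that the net ``cross-interaction'' between them is strictly attractive thanks to $\lambda_1<\min\{\lambda_2,\lambda_3\}$.  First, I would note that the hypothesis $\beta_{1,2}>\overline{\beta}_{1,2}$ is exactly designed so that the linearizations of $\mathcal{E}_{1,2}$ at the semi-trivial limits $(w_1,0)$ and $(0,w_2)$ possess a negative direction, which produces the strict inequality $\mathcal{C}_{\mathcal{N}_{1,2}}<\min\{\mathcal{E}_1(w_1),\mathcal{E}_2(w_2)\}$.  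A standard concentration-compactness argument then yields a positive radially symmetric minimizer $\overrightarrow{\varphi}^{1,2}$ of $\mathcal{E}_{1,2}$ on $\mathcal{N}_{1,2}$, whose components have the sharp decay \eqref{eqnew9999}.

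Next, I would project $(\varphi_1,\varphi_2,w_{3,R})$ onto $\mathcal{M}_{12,3}$ via the one-parameter-per-block scaling $(s\varphi_1,s\varphi_2,tw_{3,R})$.  Using $\overrightarrow{\varphi}^{1,2}\in\mathcal{N}_{1,2}$ and $w_3\in\mathcal{N}_3$, the constraints $\mathcal{G}_1+\mathcal{G}_2=0$ and $\mathcal{G}_3=0$ collapse to
\begin{eqnarray*}
(1-s^2)A=t^2 I(R),\qquad (1-t^2)B=s^2 I(R),
\end{eqnarray*}
where $A=\mu_1\|\varphi_1\|_4^4+\mu_2\|\varphi_2\|_4^4+2\beta_{1,2}\|\varphi_1\varphi_2\|_2^2>0$, $B=\mu_3\|w_3\|_4^4>0$, and
\begin{eqnarray*}
I(R)=\beta_{1,3}\|\varphi_1 w_{3,R}\|_2^2+\beta_{2,3}\|\varphi_2 w_{3,R}\|_2^2.
\end{eqnarray*}
Since $\|\varphi_j w_{3,R}\|_2^2=o_R(1)$, the implicit function theorem yields a unique solution $(s(R),t(R))$ with $s(R),t(R)\to 1$ as $R\to+\infty$.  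Because $\mathcal{M}_{12,3}$ is cut out by constraints whose sum is $\sum_j\mathcal{G}_j=0$, the identity $\mathcal{E}(\overrightarrow{u})=\frac14\sum_{j=1}^3\|u_j\|_{\lambda_j}^2$ holds on $\mathcal{M}_{12,3}$.  Therefore
\begin{eqnarray*}
\mathcal{C}_{\mathcal{M}_{12,3}}\le \mathcal{E}(s\varphi_1,s\varphi_2,tw_{3,R})=s^2\mathcal{C}_{\mathcal{N}_{1,2}}+t^2\mathcal{E}_3(w_3),
\end{eqnarray*}
so the lemma reduces to proving $I(R)>0$ for all sufficiently large $R$, which forces both $s(R)^2<1$ and $t(R)^2<1$.

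To verify $I(R)>0$, I would invoke Lemma~\ref{lemn0010} with $\varphi_j$ in place of $w_j$ (which is justified by the sharp exponential decay \eqref{eqnew9999}): since $\lambda_1<\min\{\lambda_2,\lambda_3\}$, the leading term $\|\varphi_1 w_{3,R}\|_2^2\sim R^{1-N}e^{-2\sqrt{\lambda_1}R}$ dominates $\|\varphi_2 w_{3,R}\|_2^2$, which decays strictly faster at rate $e^{-2\min\{\sqrt{\lambda_2},\sqrt{\lambda_3}\}R}$.  Since $\beta_{1,3}>0$, one obtains $I(R)=(1+o_R(1))\beta_{1,3}\|\varphi_1 w_{3,R}\|_2^2>0$ for $R$ large, giving the claimed strict inequality.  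The main delicate point is the decay estimate for $\varphi_2$: a priori $\varphi_2$ solves $-\Delta\varphi_2+(\lambda_2-\beta_{1,2}\varphi_1^2-\mu_2\varphi_2^2)\varphi_2=0$, and when $\sqrt{\lambda_2}>2\sqrt{\lambda_1}$ the source term $\beta_{1,2}\varphi_1^2\varphi_2$ could in principle enlarge $\varphi_2$ at infinity, but a comparison-principle argument (as in \cite[(4.6)-(4.7)]{LW05}) confirms the sharp upper bound $\varphi_2(|x|)\lesssim |x|^{-(N-1)/2}e^{-\sqrt{\lambda_2}|x|}$, which is all that is needed here since only strict separation of the exponential rates is used.
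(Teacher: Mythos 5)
Your proposal is correct and follows essentially the same route as the paper: both project the pair $(\varphi_1,\varphi_2,w_{3,R})$ onto $\mathcal{M}_{12,3}$ by a one-scalar-per-block scaling, reduce the energy on $\mathcal{M}_{12,3}$ to $\frac14\sum_j\|u_j\|_{\lambda_j}^2$, and conclude from the positivity of the cross-interaction $I(R)=\beta_{1,3}\|\varphi_1 w_{3,R}\|_2^2+\beta_{2,3}\|\varphi_2 w_{3,R}\|_2^2$, which follows from Lemma~\ref{lemn0010}, the decay \eqref{eqnew9999} and $\lambda_1<\min\{\lambda_2,\lambda_3\}$. Your reformulation of the projection system as $(1-s^2)A=t^2I(R)$, $(1-t^2)B=s^2I(R)$ is just a cleaner way of writing the same linear system the paper solves explicitly.
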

\begin{proof}
Since this proof is similar to that of Lemma~\ref{lem0001}, we only sketch it and point out the differences.  By \cite[Theorems~1 and 2]{AC06}, $\mathcal{C}_{\mathcal{N}_{1,2}}$ is attained by a positive and radially symmetric function $\overrightarrow{\varphi}$ for $\beta_{1,2}>\overline{\beta}_{1,2}$.  Let $w_{3,R}=w_3(x-Re_1)$, where $e_1\in\bbr^N$ satisfying $|e_1|=1$.  We consider the following system
\begin{eqnarray*}\label{eqnew0020}
\left\{\aligned\sum_{j=1}^2\|\varphi_j\|_{\lambda_j}^2=&(\sum_{j=1}^2\mu_j\|\varphi_j\|_4^4
+2\beta_{1,2}\|\varphi_1\varphi_2\|_2^2)t^2(R)+(\sum_{j=1}^2\beta_{j,3}\|\varphi_jw_{3,R}\|_2^2)s^2(R)\\
\|w_{3}\|_{\lambda_3}^2=&(\sum_{j=1}^2\beta_{j,3}\|\varphi_jw_{3,R}\|_2^2)t^2(R)+\mu_3\|w_{3}\|_4^4s^2(R).
\endaligned\right.
\end{eqnarray*}
By Lemma~\ref{lemn0010} and $\lambda_1<\min\{\lambda_2,\lambda_3\}$, $\sum_{j=1}^2\beta_{j,3}\|\varphi_jw_{3,R}\|_2^2>0$ for $R>0$ sufficiently large.  Thus, as in the proof of Lemma~\ref{lem0001}, the above linear system is uniquely solvable for $\beta_{1,2}>\overline{\beta}_{1,2}$ and the unique solution is given by
\begin{eqnarray*}
t^2(R)=1-C(\beta_{1,3}\|\varphi_1w_{3,R}\|_2^2
+\beta_{2,3}\|\varphi_2w_{3,R}\|_2^2)
\end{eqnarray*}
and
\begin{eqnarray*}
s^2(R)=1-C'(\beta_{1,3}\|\varphi_1w_{3,R}\|_2^2
+\beta_{2,3}\|\varphi_2w_{3,R}\|_2^2)
\end{eqnarray*}
for sufficiently large $R>0$.
Moreover, $(t(R)\varphi_1, t(R)\varphi_2, s(R)w_{3,R})\in\mathcal{M}_{12,3}$.  As \eqref{eqnew9999}, applying the comparison principle yields that
\begin{eqnarray*}
\varphi_{i}(|x|)\sim |x|^{-\frac{N-1}{2}}e^{-\sqrt{\lambda_i}|x|}\quad\text{as }|x|\to+\infty.
\end{eqnarray*}
Thus, by similar estimates as that used in the proof of Lemma~\ref{lem0001}, it follows from $\lambda_1<\min\{\lambda_2, \lambda_3\}$ that
\begin{eqnarray*}
\mathcal{C}_{\mathcal{M}_{12,3}}\leq\mathcal{E}((t_1(R)\varphi_1,t_2(R)\varphi_2, t_3(R)w_{3,R}))
<\mathcal{C}_{\mathcal{N}_{1,2}}+\mathcal{E}_3(w_3),
\end{eqnarray*}
for sufficiently large $R>0$.
\end{proof}

Now, we are prepared to prove the following existence result.
\begin{proposition}\label{prop0003}
Let $\beta_{1,2}>\overline{\beta}_{1,2}$, $0<\beta_{1,3}<\beta_0$ and $\beta_{2,3}<0$, where $\overline{\beta}_{1,2}$ and $\beta_0$ are given by \eqref{eqn0096} and Lemma~\ref{lemn0002}, resectively.  If $\lambda_1<\min\{\lambda_2,\lambda_3\}$, then there exists a positive minimizer of $\mathcal{E}(\overrightarrow{u})$ on $\mathcal{M}_{12,3}$.
That is, \eqref{eqnew0001} has a ground state with the Morse index 2.
\end{proposition}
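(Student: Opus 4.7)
The plan follows the template of Proposition~\ref{prop0001}, with $\mathcal{N}$ replaced by $\mathcal{M}_{12,3}$ and with Lemma~\ref{lem0003} playing the role of Lemma~\ref{lem0001}. First I would apply Lemma~\ref{lemn0002} to extract a bounded $(PS)$ sequence $\{\overrightarrow{u}_n\}\subset\mathcal{M}_{12,3}$ at the level $\mathcal{C}_{\mathcal{M}_{12,3}}$. The uniform lower bounds in \eqref{eqnew0021} prevent vanishing: since $\|u_{3,n}\|_{\lambda_3}^2\geq C_{\beta_{1,2}}$, Lions plus Sobolev produces translates $\{y_{3,n}\}\subset\bbr^N$ with $u_{3,n}(\cdot+y_{3,n})\rightharpoonup v_{3,\infty}\not=0$; since $\sum_{j=1}^2\mu_j\|u_{j,n}\|_4^4+2\beta_{1,2}\|u_{1,n}u_{2,n}\|_2^2\geq C_{\beta_{1,2}}$, there is also a translation sequence $\{y_{12,n}\}$ along which at least one of $u_{1,n}(\cdot+y_{12,n}), u_{2,n}(\cdot+y_{12,n})$ has a nontrivial weak limit. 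Setting $v_{i,j,n}=u_{i,n}(\cdot+y_{j,n})$ and passing to weak limits $v_{i,j,\infty}$, the natural-constraint property of $\mathcal{M}_{12,3}$, which rests on the diagonal dominance of $\Upsilon$ established in the proof of Lemma~\ref{lemn0002}, makes each $\overrightarrow{v}_{j,\infty}$ a critical point of $\mathcal{E}$ on $\mathcal{H}$.

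Next I would dichotomize according to whether $|y_{12,n}-y_{3,n}|$ remains bounded or tends to $+\infty$. In the bounded case, after replacing one sequence by the other, all three components weakly concentrate around a common center, so $\overrightarrow{v}_{3,\infty}$ has $v_{3,3,\infty}\not=0$ and at least one nontrivial entry in the pair $(v_{1,3,\infty},v_{2,3,\infty})$; a Br\'ezis--Lieb decomposition of the $L^4$- and cross-norms together with $\beta_{2,3}<0$ and Fatou yields $\mathcal{E}(\overrightarrow{v}_{3,\infty})\leq\mathcal{C}_{\mathcal{M}_{12,3}}$. Replacing each component by its absolute value, applying the Harnack inequality to rule out trivial components of the reduced subsystem (exactly as in the treatment of cases $(i)$--$(iii)$ of Proposition~\ref{prop0001}), and invoking Lemma~\ref{lemn0002} then deliver the positive ground state of Morse index $2$. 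In the unbounded case, a dumbbell splitting produces
\begin{eqnarray*}
\mathcal{C}_{\mathcal{M}_{12,3}}+o_n(1)\geq\mathcal{E}_{1,2}(v_{1,12,\infty},v_{2,12,\infty})+\mathcal{E}_3(v_{3,\infty})\geq\mathcal{C}_{\mathcal{N}_{1,2}}+\mathcal{E}_3(w_3),
\end{eqnarray*}
where whenever one entry of $(v_{1,12,\infty},v_{2,12,\infty})$ vanishes the first summand degenerates to a scalar ground-state energy $\mathcal{E}_j(w_j)$ and the total remains at least $\mathcal{C}_{\mathcal{N}_{1,2}}+\mathcal{E}_3(w_3)$ because the hypothesis $\beta_{1,2}>\overline{\beta}_{1,2}$ together with \cite[Theorems~1 and 2]{AC06} gives $\mathcal{C}_{\mathcal{N}_{1,2}}<\mathcal{E}_1(w_1)+\mathcal{E}_2(w_2)$. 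This lower bound contradicts the strict inequality of Lemma~\ref{lem0003}.

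The main obstacle is the bookkeeping of sub-splittings inside the unbounded alternative: each of $u_{1,n}, u_{2,n}$ may retain mass only near $y_{12,n}$, only near $y_{3,n}$, near both (requiring an auxiliary translate $y_{*,n}$ with $|y_{*,n}-y_{12,n}|,|y_{*,n}-y_{3,n}|\to+\infty$), or escape to infinity; one must verify in every scenario that the salvaged energy is at least $\mathcal{C}_{\mathcal{N}_{1,2}}+\mathcal{E}_3(w_3)$. The hypotheses $\lambda_1<\min\{\lambda_2,\lambda_3\}$ and $\beta_{1,2}>\overline{\beta}_{1,2}$ are precisely what force this bound in every subcase, and they also drive the strict inequality of Lemma~\ref{lem0003}; together they close off the dichotomy in favor of the concentrated alternative, delivering via Lemma~\ref{lemn0002} the sought-after positive ground state with Morse index $2$.
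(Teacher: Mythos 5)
Your proposal is correct and follows essentially the same route as the paper's proof: a $(PS)$ sequence from Lemma~\ref{lemn0002}, non-vanishing via \eqref{eqnew0021} and the Lions lemma, exclusion of all splitting scenarios by comparing the salvaged energy against the strict upper bound $\mathcal{C}_{\mathcal{M}_{12,3}}<\mathcal{C}_{\mathcal{N}_{1,2}}+\mathcal{E}_3(w_3)$ of Lemma~\ref{lem0003} (using $\mathcal{C}_{\mathcal{N}_{1,2}}<\min\{\mathcal{E}_1(w_1),\mathcal{E}_2(w_2)\}$ from \cite{AC06} when a component of the pair vanishes), and finally the Harnack inequality together with Lemma~\ref{lemn0002} for positivity and the Morse index. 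The only quibble is phrasing: the Harnack inequality yields positivity of the nonnegative minimizer, while the trivial-component cases are ruled out by the energy estimates you cite in parentheses, exactly as in cases $(i)$--$(iii)$ of the paper's argument.
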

\begin{proof}
By Lemma~\ref{lemn0002}, $\mathcal{M}_{12,3}$ contains a $(PS)$ sequence of $\mathcal{E}(\overrightarrow{u})$, say $\{\overrightarrow{u}_n\}$, at the least energy value $\mathcal{C}_{\mathcal{M}_{12,3}}$.  Since $\beta_{1,2}>\overline{\beta}_{1,2}$, by \eqref{eqnew0021} and \cite[Theorems~1 and 2]{AC06}, applying the Lions lemma and the Sobolev embedding theorem in a standard way yields that, there exist $\{y_n\},\{z_n\}\subset\bbr^N$ such that $v_{j,n}=u_{j,n}(x+y_n)\rightharpoonup v_{j,\infty}\not=0$ for both $j=1,2$ and $\widehat{v}_{3,n}=u_{3,n}(x+z_n)\rightharpoonup \widehat{v}_{3,\infty}\not=0$ weakly in $H^1(\bbr^N)$ as $n\to\infty$.  Indeed, if we denote $v_{3,n}=u_{3,n}(x+y_n)$ and $\widehat{v}_{j,n}=u_{j,n}(x+z_n)$ for both $j=1,2$, then $v_{3,n}\rightharpoonup v_{3,\infty}$ and $\widehat{v}_{j,n}\rightharpoonup \widehat{v}_{j,\infty}$ weakly in $H^1(\bbr^N)$ as $n\to\infty$ for both $j=1,2$.  Now, if $\widehat{v}_{j,\infty}\not=0$ for all $j=1,2,3$, then similar as in the proof of Proposition~\ref{prop0001}, we can show that there exists a positive minimizer of $\mathcal{E}(\overrightarrow{u})$ on $\mathcal{M}_{12,3}$.  Otherwise, if
either $v_{1,\infty}=0$ or $v_{2,\infty}=0$, then by taking $\beta_0$ sufficiently small if necessary and using similar arguments in the proof of Proposition~\ref{prop0001}, $\mathcal{C}_{\mathcal{M}_{12,3}}\geq\min\{\mathcal{E}_1(w_1), \mathcal{E}_2(w_2)\}+\mathcal{E}_3(w_3)-C\beta_0$, which contradicts \cite[Theorems~1 and 2]{AC06}, $\beta_{1,2}>\overline{\beta}_{1,2}$ and Lemma~\ref{lem0003}.
We next claim that either $v_{3,\infty}\not=0$ or $\widehat{v}_{j,\infty}\not=0$ for both $j=1,2$.  Suppose the contrary; then, one of the following cases must happen:
\begin{enumerate}
\item[$(i)$]  $v_{3,\infty}=0$, $\widehat{v}_{1,\infty}=0$ and $\widehat{v}_{2,\infty}\not=0$.
\item[$(ii)$]  $v_{3,\infty}=0$, $\widehat{v}_{1,\infty}\not=0$ and $\widehat{v}_{2,\infty}=0$.
\item[$(iii)$]  $v_{3,\infty}=0$, $\widehat{v}_{1,\infty}=0$ and $\widehat{v}_{2,\infty}=0$.
\end{enumerate}
Since $\{\overrightarrow{u}_n\}$ is a $(PS)$ sequence, it is standard to show that
$$
\overrightarrow{v}_\infty=(v_{1,\infty},v_{2,\infty},v_{3,\infty})\quad\text{and}\quad
\overrightarrow{\widehat{v}}_\infty=(\widehat{v}_{1,\infty},\widehat{v}_{2,\infty},\widehat{v}_{3,\infty})
$$
are both critical points of $\mathcal{E}(\overrightarrow{u})$.  In the case~$(i)$, $(v_{1,\infty},v_{2,\infty})$ is a nontrivial critical point of $\mathcal{E}_{1,2}(\overrightarrow{\phi})$ and $(\widehat{v}_{2,\infty},\widehat{v}_{3,\infty})$ is a nontrivial critical point of $\mathcal{E}_{2,3}(\overrightarrow{\phi})$.  Since $\widehat{v}_{1,\infty}=0$ and $v_{1,\infty}\not=0$, by the Sobolev embedding theorem, $|y_n-z_n|\to+\infty$ as $n\to\infty$.  Now, as in the proof of Proposition~\ref{prop0001}, we have the following energy estimate:
\begin{eqnarray*}
\mathcal{C}_{\mathcal{M}_{12,3}}&=&\frac{1}{4}\sum_{j=1}^3\|u_{j,n}\|_{\lambda_j}^2+o_n(1)\\
&=&\frac{1}{4}\sum_{j=1}^2\|v_{j,n}\|_{\lambda_j}^2+\frac{1}{4}\|\widehat{v}_{3,n}\|_{\lambda_3}^2+o_n(1)\\
&\geq&\mathcal{C}_{\mathcal{N}_{1,2}}+\mathcal{E}_3(w_3)+\frac{1}4\|w_2\|_{\lambda_2}+o_{\beta_0}(1)+o_n(1),
\end{eqnarray*}
where $o_{\beta_0}(1)\to0$ as $\beta_0\to0$.  It contradicts Lemma~\ref{lem0003} by taking $\beta_0>0$ sufficiently small.  Thus, the case~$(i)$ is impossible.  Similarly, the case~$(ii)$ is also impossible.  It remains to exclude the case~$(iii)$.  In this case,
\begin{eqnarray*}
\mathcal{C}_{\mathcal{M}_{12,3}}&=&\frac{1}{4}\sum_{j=1}^3\|u_{j,n}\|_{\lambda_j}^2+o_n(1)\\
&=&\frac{1}{4}\sum_{j=1}^2\|v_{j,n}\|_{\lambda_j}^2
+\|\widehat{v}_{3,n}\|_{\lambda_3}^2+o_n(1)\\
&\geq&\mathcal{C}_{\mathcal{N}_{1,2}}+\mathcal{E}_3(w_3)+o_n(1),
\end{eqnarray*}
which contradicts Lemma~\ref{lem0003}.  Therefore, without loss of generality, we may assume that $(v_{1,\infty},v_{2,\infty},v_{3,\infty})$ is a nontrivial critical point of $\mathcal{E}(\overrightarrow{u})$.  By a standard argument, we can show that $\mathcal{C}_{\mathcal{M}_{12,3}}$ is attained by $(|v_{1,\infty}|,|v_{2,\infty}|,|v_{3,\infty}|)$.  By the Harnack inequality and Lemma~\ref{lemn0002}, $(|v_{1,\infty}|,|v_{2,\infty}|,|v_{3,\infty}|)$ is a ground state of \eqref{eqnew0001} with the Morse index 2.
\end{proof}

We need to further prepare an existence result for the purely attractive case: $\beta_{1,2}>0$, $\beta_{1,3}>0$ and $\beta_{2,3}>0$.  By checking the proof of Lemma~\ref{lemn0002}, we can see that it still works for $\beta_{1,2}>0$ and $0<\beta_{1,3},\beta_{2,3}<\beta_0$.  Thus, we can still work in $\mathcal{M}_{12,3}$ for $\beta_{1,2}>0$ and $0<\beta_{1,3},\beta_{2,3}<\beta_0$.  Since the Schwatz symmetrization works for this case, the minimizing sequence, at the least energy value $\mathcal{C}_{\mathcal{M}_{12,3}}$, can be chosen to be radially symmetric.  Recall that $\mathcal{C}_{\mathcal{N}_{1,2}}<\min\{\mathcal{E}_1(w_1), \mathcal{E}_2(w_2)\}$ for $\beta_{1,2}>\overline{\beta}_{1,2}$ by \cite[Theorems~1 and 2]{AC06}, by a standard argument, we can obtain the following:
\begin{proposition}\label{prop0014}
If $\beta_{1,2}>\overline{\beta}_{1,2}$ and $0<\beta_{1,3},\beta_{2,3}<\beta_0$, then there exists a positive minimizer of $\mathcal{E}(\overrightarrow{u})$ on $\mathcal{M}_{12,3}$.
That is, \eqref{eqnew0001} has a ground state with the Morse index 2.
\end{proposition}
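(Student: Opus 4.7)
The plan is to follow the scheme of Proposition~\ref{prop0003}, but replace the concentration--compactness step by Schwarz symmetrization, which is available precisely because all couplings $\beta_{i,j}$ are now positive. First I would invoke the extension of Lemma~\ref{lemn0002} noted immediately before the statement to produce a Palais--Smale sequence $\{\overrightarrow{u}_n\}\subset\mathcal{M}_{12,3}$ at the level $\mathcal{C}_{\mathcal{M}_{12,3}}$. Since every $\beta_{i,j}>0$, the P\'olya--Szeg\"o inequality together with the Riesz rearrangement inequality show that replacing each component by its Schwarz symmetrization does not increase $\mathcal{E}$; a one-parameter rescaling along the two-dimensional fiber of the constraint $\overrightarrow{\widehat{\mathcal{Q}}}_{12,3}=\overrightarrow{0}$ then brings the rearranged vector back onto $\mathcal{M}_{12,3}$ without raising the energy. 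So I may assume the minimizing sequence consists of radially symmetric functions. The nondegeneracy bound \eqref{eqnew0021} still holds (the sign of $\beta_{2,3}$ only sharpened an inequality whose positive counterpart still keeps the $L^4$ norms away from zero), so $\{\overrightarrow{u}_n\}$ stays bounded in $\mathcal{H}$ with $\|u_{j,n}\|_4$ bounded away from $0$ for every $j$.

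Second, by the compactness of $H^1_{\text{rad}}(\bbr^N)\hookrightarrow L^4(\bbr^N)$, up to a subsequence $u_{j,n}\to u_{j,\infty}$ strongly in $L^4$ and weakly in $H^1$. Passing to the limit in the constraint and in $\mathcal{E}'(\overrightarrow{u}_n)\to 0$ shows that $\overrightarrow{u}_\infty$ is a critical point of $\mathcal{E}$, and the $L^4$ convergence makes both $\mathcal{E}(\overrightarrow{u}_\infty)=\mathcal{C}_{\mathcal{M}_{12,3}}$ and $\overrightarrow{u}_\infty\in\mathcal{M}_{12,3}$ immediate provided every $u_{j,\infty}$ is nontrivial. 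Then passing to absolute values and applying the Harnack inequality together with Lemma~\ref{lemn0002} would produce a positive ground state with Morse index~$2$.

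The main obstacle is to rule out the possibility that some $u_{j,\infty}$ vanishes. I would handle this via a strict upper bound on $\mathcal{C}_{\mathcal{M}_{12,3}}$. Testing with $(\varphi_1,\varphi_2,w_{3,R})$, where $\overrightarrow{\varphi}$ attains $\mathcal{C}_{\mathcal{N}_{1,2}}$, and solving the associated $2\times2$ Nehari-rescaling system exactly as in Lemma~\ref{lem0003}, I expect
\[
\mathcal{C}_{\mathcal{M}_{12,3}}<\mathcal{C}_{\mathcal{N}_{1,2}}+\mathcal{E}_3(w_3),
\]
which is in fact easier than Lemma~\ref{lem0003} because the cross terms $\beta_{j,3}\|\varphi_jw_{3,R}\|_2^2$ now all carry the favourable sign (so the leading correction is unambiguously negative, with no need for the hypothesis $\lambda_1<\min\{\lambda_2,\lambda_3\}$). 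If $u_{3,\infty}\equiv 0$, weak lower semicontinuity forces $\mathcal{C}_{\mathcal{M}_{12,3}}\geq\mathcal{C}_{\mathcal{N}_{1,2}}+\mathcal{E}_3(w_3)$, contradicting the above bound. If instead $u_{1,\infty}\equiv 0$ or $u_{2,\infty}\equiv 0$, the limit yields
\[
\mathcal{C}_{\mathcal{M}_{12,3}}\geq\min\{\mathcal{E}_1(w_1),\mathcal{E}_2(w_2)\}+\mathcal{E}_3(w_3),
\]
which combined with the \cite{AC06} strict inequality $\mathcal{C}_{\mathcal{N}_{1,2}}<\min\{\mathcal{E}_1(w_1),\mathcal{E}_2(w_2)\}$ (valid for $\beta_{1,2}>\overline{\beta}_{1,2}$) again contradicts the upper bound. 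Hence every component of $\overrightarrow{u}_\infty$ is nontrivial and the proof closes. The delicate point here, and the only one beyond routine adaptations, is the availability of a ground state of $\mathcal{C}_{\mathcal{N}_{1,2}}$ strictly below the two scalar energies, which is precisely what the hypothesis $\beta_{1,2}>\overline{\beta}_{1,2}$ buys from \cite{AC06}.
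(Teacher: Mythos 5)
Your proposal is correct and follows essentially the same route as the paper, which only sketches this result: it extends Lemma~\ref{lemn0002} to the case $0<\beta_{1,3},\beta_{2,3}<\beta_0$, uses Schwarz symmetrization (available since all couplings are attractive) to reduce to a radially symmetric minimizing sequence, and then combines the compact radial embedding with the strict inequality $\mathcal{C}_{\mathcal{N}_{1,2}}<\min\{\mathcal{E}_1(w_1),\mathcal{E}_2(w_2)\}$ from \cite{AC06} to rule out vanishing of any component. The details you supply (the rescaling back onto $\mathcal{M}_{12,3}$ after rearrangement, the upper bound $\mathcal{C}_{\mathcal{M}_{12,3}}<\mathcal{C}_{\mathcal{N}_{1,2}}+\mathcal{E}_3(w_3)$ now holding without any condition on the $\lambda_j$ because both cross terms have the favourable sign, and the energy comparisons excluding $u_{j,\infty}\equiv0$) are exactly the ``standard argument'' the paper appeals to.
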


\subsection{Ground states with the Morse index 1}
In this section, we shall study the existence of the ground states with the Morse index 1.  We define another Nehari manifold of $\mathcal{E}(\overrightarrow{u})$ as follows:
\begin{eqnarray*}
\mathcal{M}=\{\overrightarrow{u}\in\mathcal{H}\backslash\{\overrightarrow{0}\}\mid \mathcal{Q}(u)=\sum_{j=1}^3\mathcal{G}_j(\overrightarrow{u})=0\},
\end{eqnarray*}
where $\mathcal{G}_j(\overrightarrow{u})=\|u_j\|_{\lambda_j}^2-\mu_j\|u_j\|_4^4-\sum_{i=1,i\not=j}^3\beta_{i,j}\|u_iu_j\|_2^2$.
Let
\begin{eqnarray*}
\mathcal{C}_{\mathcal{M}}=\inf_{\mathcal{M}}\mathcal{E}(\overrightarrow{u}).
\end{eqnarray*}
Then, $\mathcal{C}_{\mathcal{M}}$ is well defined and nonnegative.
\begin{lemma}\label{lemn0003}
Let $\beta_{1,2}>0$, $\beta_{1,3}>0$ and $\beta_{2,3}>0$.  Then, $\mathcal{M}$ contains a $(PS)$ sequence at the least energy value $\mathcal{C}_{\mathcal{M}}$.  Moreover, any positive minimizer of $\mathcal{E}(\overrightarrow{u})$ on $\mathcal{M}$ is a ground state of \eqref{eqnew0001} with the Morse index 1.
\end{lemma}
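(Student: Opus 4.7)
The plan is to imitate the two earlier Nehari-manifold arguments (Lemmas~\ref{lemn0001}--\ref{lemn0002}), taking advantage of the fact that $\mathcal{M}$ is cut out by a \emph{single} scalar constraint, which makes the smoothness and Morse-index analysis especially clean.

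First, I would verify that $\mathcal{M}$ is a smooth $C^2$ manifold near any point of relevance and that it is a \emph{natural constraint}.  Using $\mathcal{G}_j(\overrightarrow{u})=\|u_j\|_{\lambda_j}^2-\mu_j\|u_j\|_4^4-\sum_{i\neq j}\beta_{i,j}\|u_iu_j\|_2^2$ and $\mathcal{Q}=\sum_j\mathcal{G}_j$, a direct differentiation gives
\begin{eqnarray*}
\mathcal{Q}'(\overrightarrow{u})[\overrightarrow{u}]=2\sum_{j=1}^3\|u_j\|_{\lambda_j}^2-4\bigg(\sum_{j=1}^3\mu_j\|u_j\|_4^4+2\sum_{i<j}\beta_{i,j}\|u_iu_j\|_2^2\bigg)=-2\sum_{j=1}^3\|u_j\|_{\lambda_j}^2,
\end{eqnarray*}
where the last equality uses $\mathcal{Q}(\overrightarrow{u})=0$.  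In particular $\mathcal{Q}'(\overrightarrow{u})\neq\overrightarrow{0}$ on $\mathcal{M}$, so $\mathcal{M}$ is a $C^2$ Hilbert submanifold and $\mathcal{H}=\mathcal{T}_{\overrightarrow{u}}\mathcal{M}\oplus\bbr\overrightarrow{u}$.  Moreover, if $\mathcal{E}'(\overrightarrow{v})=\sigma\mathcal{Q}'(\overrightarrow{v})$ for some Lagrange multiplier $\sigma$, then pairing with $\overrightarrow{v}$ yields $0=\mathcal{E}'(\overrightarrow{v})[\overrightarrow{v}]=\sigma\mathcal{Q}'(\overrightarrow{v})[\overrightarrow{v}]=-2\sigma\sum_j\|v_j\|_{\lambda_j}^2$, forcing $\sigma=0$.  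Hence constrained critical points are genuine critical points of $\mathcal{E}$.

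Second, for the $(PS)$-sequence, one checks in the standard way that $\mathcal{C}_{\mathcal{M}}>0$ and that for any $\overrightarrow{u}\in\mathcal{M}$ with $\mathcal{E}(\overrightarrow{u})$ bounded, one has $1\lesssim\sum_j\|u_j\|_{\lambda_j}^2$ (so the quantity $|\mathcal{Q}'(\overrightarrow{u})[\overrightarrow{u}]|$ is bounded below).  Applying Ekeland's variational principle to a minimizing sequence of $\mathcal{E}$ on $\mathcal{M}$ produces $\{\overrightarrow{u}_n\}\subset\mathcal{M}$ with $\mathcal{E}(\overrightarrow{u}_n)\to\mathcal{C}_{\mathcal{M}}$ and $\mathcal{E}'|_\mathcal{M}(\overrightarrow{u}_n)\to\overrightarrow{0}$; writing this in unconstrained form as $\mathcal{E}'(\overrightarrow{u}_n)-\sigma_n\mathcal{Q}'(\overrightarrow{u}_n)\to\overrightarrow{0}$ and pairing with $\overrightarrow{u}_n$ gives $\sigma_n\cdot(-2\sum_j\|u_{j,n}\|_{\lambda_j}^2)\to 0$, whence $\sigma_n\to0$ and therefore $\mathcal{E}'(\overrightarrow{u}_n)\to\overrightarrow{0}$ in $\mathcal{H}^{-1}$.

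Third, for the Morse-index statement, let $\overrightarrow{v}$ be a positive minimizer of $\mathcal{E}$ on $\mathcal{M}$.  Since $\overrightarrow{v}$ minimizes $\mathcal{E}|_\mathcal{M}$, we have $\mathcal{E}''(\overrightarrow{v})(\overrightarrow{h},\overrightarrow{h})\geq 0$ for every $\overrightarrow{h}\in\mathcal{T}_{\overrightarrow{v}}\mathcal{M}$, so the Morse index of $\overrightarrow{v}$ is at most the codimension of $\mathcal{T}_{\overrightarrow{v}}\mathcal{M}$, namely $1$.  On the other hand,
\begin{eqnarray*}
\mathcal{E}''(\overrightarrow{v})(\overrightarrow{v},\overrightarrow{v})=\sum_{j=1}^3\|v_j\|_{\lambda_j}^2-3\bigg(\sum_{j=1}^3\mu_j\|v_j\|_4^4+2\sum_{i<j}\beta_{i,j}\|v_iv_j\|_2^2\bigg)=-2\sum_{j=1}^3\|v_j\|_{\lambda_j}^2<0,
\end{eqnarray*}
which provides one negative direction and forces the Morse index to be exactly $1$.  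Combined with the natural-constraint property above (and elliptic regularity), $\overrightarrow{v}$ is a ground state of \eqref{eqnew0001} with Morse index $1$.  The only delicate point is the coercivity/boundedness away from $0$ of the quantities controlling $\mathcal{Q}'[\overrightarrow{u}]$ on minimizing sequences; however, since all $\beta_{i,j}>0$ in the purely attractive case, the constraint $\mathcal{Q}(\overrightarrow{u})=0$ makes $\sum_j\|u_j\|_{\lambda_j}^2$ comparable to $4\mathcal{E}(\overrightarrow{u})$, and this obstacle disappears.
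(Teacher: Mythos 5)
Your proposal is correct and is essentially the standard single‑constraint Nehari argument that the paper invokes in one line (the paper merely notes that $\mathcal{M}$ is homeomorphic to the open set $\mathcal{O}$ on which the quartic form is positive and says "the conclusion follows from a standard argument"); your computations $\mathcal{Q}'(\overrightarrow{u})[\overrightarrow{u}]=-2\sum_j\|u_j\|_{\lambda_j}^2$, the vanishing of the Lagrange multiplier, the Ekeland step, and the index count ($\mathcal{E}''\geq0$ on the codimension‑one tangent space plus the negative direction $\overrightarrow{v}$) all check out. You have simply written out in detail what the paper leaves implicit, so no further changes are needed.
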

\begin{proof}
Since $\mathcal{M}$ is homeomorphous to the set
\begin{eqnarray*}
\mathcal{O}=\{\overrightarrow{u}\in\mathcal{H}\backslash\{\overrightarrow{0}\}
\mid\sum_{j=1}^3\mu_j\|u_j\|_4^4+2\sum_{i,j=1,i<j}^3\beta_{i,j}\|u_iu_j\|_2^2>0\},
\end{eqnarray*}
the conclusion follows from a standard argument.
\end{proof}

By Lemma~\ref{lemn0003}, to prove the existence of the ground states of \eqref{eqnew0001} with the Morse index 1, it is sufficient to prove the existence of a positive minimizer of $\mathcal{E}(\overrightarrow{u})$ on $\mathcal{M}$.

Let
\begin{eqnarray*}
\mathcal{M}_{i,j}=\{\overrightarrow{\phi}\in\mathcal{H}_{i,j}\backslash\{\overrightarrow{0}\}\mid \mathcal{Q}_{i,j}(\phi)=\widehat{\mathcal{G}}_i(\overrightarrow{\phi})+\widehat{\mathcal{G}}_j(\overrightarrow{\phi})=0\},
\end{eqnarray*}
where $\mathcal{H}_{i,j}=\mathcal{H}_i\times\mathcal{H}_j$, $\widehat{\mathcal{G}}_j(\overrightarrow{\phi})=\|\phi_j\|_{\lambda_j}^2
-\mu_j\|\phi_j\|_{4}^4-\beta_{i,j}\|\phi_i\phi_j\|_2^2$ and $(i,j)$ equals to $(1,2)$, $(1,3)$ and $(2,3)$.  We define
\begin{eqnarray*}
\mathcal{C}_{\mathcal{M}_{i,j}}=\inf_{\mathcal{M}_{i,j}}\mathcal{E}_{i,j}(\overrightarrow{\phi}).
\end{eqnarray*}
Then, $\mathcal{C}_{\mathcal{M}_{i,j}}$ is well defined and nonnegative.  $\mathcal{C}_{\mathcal{M}_{i,j}}$ can also be variational expressed as follows:
\begin{eqnarray*}\label{eqn0080}
\mathcal{C}_{\mathcal{M}_{i,j}}=\inf_{\overrightarrow{u}\in(\mathcal{H}_i\times\mathcal{H}_j)\backslash\{\overrightarrow{0}\}}
\frac{(\|u_i\|_{\lambda_i}^2+\|u_j\|_{\lambda_j}^2)^2}{4(\mu_i\|u_i\|_4^4+\mu_j\|u_j\|_4^4+2\beta_{i,j}\|u_iu_j\|_2^2)}.
\end{eqnarray*}
Moreover, if $\beta_{i,j}>\overline{\beta}_{i,j}$ then $\mathcal{C}_{\mathcal{M}_{i,j}}=\mathcal{C}_{\mathcal{N}_{i,j}}$ is attained by $\overrightarrow{\varphi}^{i,j}$, which is positive and radially symmetric.  Here, $\overline{\beta}_{i,j}$ is defined as that of $\overline{\beta}_{1,2}$ at \eqref{eqn0096} (cf. \cite[Theorems~1 and 2]{AC06}).  Clearly, $\overrightarrow{\varphi}^{i,j}$ is also a solution of \eqref{eqn0099}.
\begin{lemma}\label{lem0009}
If $\beta_{i,j}\to+\infty$, then
\begin{eqnarray*}\label{eq0011}
(\sqrt{\beta_{i,j}}\varphi^{i,j}_i, \sqrt{\beta_{i,j}}\varphi^{i,j}_j)\to(\widetilde{\varphi}^{i,j}_i, \widetilde{\varphi}^{i,j}_j)
\end{eqnarray*}
up to a subsequence,
where $\overrightarrow{\widetilde{\varphi}}_{i,j}=(\widetilde{\varphi}^{i,j}_i, \widetilde{\varphi}^{i,j}_j)$, which is positive and radially symmetric, is a minimizer of the following minimizing problem:
\begin{eqnarray}\label{eqn0081}
\widetilde{\mathcal{D}}_{i,j}=\inf_{\overrightarrow{u}\in(\mathcal{H}_i\times\mathcal{H}_j)\backslash\{\overrightarrow{0}\}}
\frac{(\|u_i\|_{\lambda_i}^2+\|u_j\|_{\lambda_j}^2)^2}{8\|u_iu_j\|_2^2}.
\end{eqnarray}
\end{lemma}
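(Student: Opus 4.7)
The plan is to set $\widetilde{\varphi}^{i,j}_k=\sqrt{\beta_{i,j}}\,\varphi^{i,j}_k$ ($k=i,j$), derive two-sided bounds on $\mathcal{C}_{\mathcal{M}_{i,j}}$ of order $1/\beta_{i,j}$, extract a radial strong $L^4$-limit via compact radial embedding, and identify the limit as a minimizer of $\widetilde{\mathcal{D}}_{i,j}$. First, from $\overrightarrow{\varphi}^{i,j}\in\mathcal{M}_{i,j}$ and the identity $\mathcal{E}_{i,j}(\overrightarrow{\varphi}^{i,j})=\frac14(\|\varphi^{i,j}_i\|_{\lambda_i}^2+\|\varphi^{i,j}_j\|_{\lambda_j}^2)$, testing the quotient in the variational expression for $\mathcal{C}_{\mathcal{M}_{i,j}}$ with $(\beta_{i,j}^{-1/2}\phi,\beta_{i,j}^{-1/2}\phi)$ for a fixed nontrivial $\phi$ yields $\mathcal{C}_{\mathcal{M}_{i,j}}\lesssim 1/\beta_{i,j}$. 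Combining the Nehari identity with the Sobolev embedding (for the $\mu_k\|\varphi^{i,j}_k\|_4^4$ terms) and Cauchy--Schwarz (for $\|\varphi^{i,j}_i\varphi^{i,j}_j\|_2^2$) gives $\|\overrightarrow{\varphi}^{i,j}\|^2\lesssim(1+\beta_{i,j})\|\overrightarrow{\varphi}^{i,j}\|^4$, so $\mathcal{C}_{\mathcal{M}_{i,j}}\gtrsim 1/\beta_{i,j}$. Thus $\beta_{i,j}\mathcal{C}_{\mathcal{M}_{i,j}}\sim 1$.

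Second, multiplying the Nehari identity for $\overrightarrow{\varphi}^{i,j}$ by $\beta_{i,j}$ yields the rescaled identity
\begin{eqnarray*}
\|\widetilde{\varphi}^{i,j}_i\|_{\lambda_i}^2+\|\widetilde{\varphi}^{i,j}_j\|_{\lambda_j}^2=\tfrac{\mu_i}{\beta_{i,j}}\|\widetilde{\varphi}^{i,j}_i\|_4^4+\tfrac{\mu_j}{\beta_{i,j}}\|\widetilde{\varphi}^{i,j}_j\|_4^4+2\|\widetilde{\varphi}^{i,j}_i\widetilde{\varphi}^{i,j}_j\|_2^2,
\end{eqnarray*}
while $\frac14(\|\widetilde{\varphi}^{i,j}_i\|_{\lambda_i}^2+\|\widetilde{\varphi}^{i,j}_j\|_{\lambda_j}^2)=\beta_{i,j}\mathcal{C}_{\mathcal{M}_{i,j}}$ is bounded above and below. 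So $\{\overrightarrow{\widetilde{\varphi}}^{i,j}\}$ is bounded in $\mathcal{H}_i\times\mathcal{H}_j$. Since $\overrightarrow{\varphi}^{i,j}$ is radially symmetric, so is $\overrightarrow{\widetilde{\varphi}}^{i,j}$, and the compact embedding $H^1_{\mathrm{rad}}(\bbr^N)\hookrightarrow L^4(\bbr^N)$ produces, along a subsequence, a weak $\mathcal{H}$-limit $(\widetilde{\varphi}^{\infty}_i,\widetilde{\varphi}^{\infty}_j)$ that is the strong $L^4$-limit and is nonnegative and radial. To exclude triviality of either component, observe that if, say, $\widetilde{\varphi}^{\infty}_i\equiv 0$, then H\"older gives $\|\widetilde{\varphi}^{i,j}_i\widetilde{\varphi}^{i,j}_j\|_2^2\to 0$ and $\|\widetilde{\varphi}^{i,j}_i\|_4^4\to 0$; the rescaled identity then forces $\|\widetilde{\varphi}^{i,j}_j\|_{\lambda_j}^2\to 0$ too (using $\mathcal{H}$-boundedness to control $\frac{\mu_j}{\beta_{i,j}}\|\widetilde{\varphi}^{i,j}_j\|_4^4$), contradicting $\beta_{i,j}\mathcal{C}_{\mathcal{M}_{i,j}}\gtrsim 1$.

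Third, I would identify $\lim\beta_{i,j}\mathcal{C}_{\mathcal{M}_{i,j}}=\widetilde{\mathcal{D}}_{i,j}$ and show this limit is attained at $(\widetilde{\varphi}^{\infty}_i,\widetilde{\varphi}^{\infty}_j)$. For the upper bound, take a near-minimizer $(\phi_i,\phi_j)$ of $\widetilde{\mathcal{D}}_{i,j}$; plugging $(\beta_{i,j}^{-1/2}\phi_i,\beta_{i,j}^{-1/2}\phi_j)$ into the variational formula for $\mathcal{C}_{\mathcal{M}_{i,j}}$ and collecting powers of $\beta_{i,j}$ gives $\beta_{i,j}\mathcal{C}_{\mathcal{M}_{i,j}}\le \widetilde{\mathcal{D}}_{i,j}+o(1)$. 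For the lower bound, apply the same variational formula to $\overrightarrow{\widetilde{\varphi}}^{i,j}$ itself, so that $\beta_{i,j}\mathcal{C}_{\mathcal{M}_{i,j}}=\frac{(\|\widetilde{\varphi}^{i,j}_i\|_{\lambda_i}^2+\|\widetilde{\varphi}^{i,j}_j\|_{\lambda_j}^2)^2}{4(\frac{\mu_i}{\beta_{i,j}}\|\widetilde{\varphi}^{i,j}_i\|_4^4+\frac{\mu_j}{\beta_{i,j}}\|\widetilde{\varphi}^{i,j}_j\|_4^4+2\|\widetilde{\varphi}^{i,j}_i\widetilde{\varphi}^{i,j}_j\|_2^2)}$. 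Weak lower semicontinuity of $\|\cdot\|_{\lambda_k}$ bounds the numerator from below by $(\|\widetilde{\varphi}^{\infty}_i\|_{\lambda_i}^2+\|\widetilde{\varphi}^{\infty}_j\|_{\lambda_j}^2)^2$, and the $L^4$-boundedness together with $1/\beta_{i,j}\to 0$ and the strong $L^4$ convergence of the product shows the denominator converges to $8\|\widetilde{\varphi}^{\infty}_i\widetilde{\varphi}^{\infty}_j\|_2^2$. Hence $\liminf\beta_{i,j}\mathcal{C}_{\mathcal{M}_{i,j}}\ge \widetilde{\mathcal{D}}_{i,j}$. Matching the two bounds forces equality everywhere, so $(\widetilde{\varphi}^{\infty}_i,\widetilde{\varphi}^{\infty}_j)$ attains $\widetilde{\mathcal{D}}_{i,j}$ and the weak convergence upgrades to strong convergence in $\mathcal{H}_i\times\mathcal{H}_j$. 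Positivity of both components then follows from the strong maximum principle applied to the elliptic Euler--Lagrange system associated with $\widetilde{\mathcal{D}}_{i,j}$.

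The main technical obstacle is the simultaneous nontriviality of the two components of the limit: ruling out the one-sided vanishing $\widetilde{\varphi}^{\infty}_i\equiv 0\ne\widetilde{\varphi}^{\infty}_j$ relies crucially on the two-sided estimate $\beta_{i,j}\mathcal{C}_{\mathcal{M}_{i,j}}\sim 1$ combined with the rescaled Nehari identity, and this must be done with the right bookkeeping of the $1/\beta_{i,j}$ factors so that the lower-order self-interaction terms genuinely vanish in the limit while the cross term controls the $\mathcal{H}$-norm. The rest of the argument is a standard concentration-compactness procedure made available by radial symmetry.
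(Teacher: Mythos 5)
Your proposal is correct and follows essentially the same route as the paper: rescale by $\sqrt{\beta_{i,j}}$, establish the two-sided asymptotics $\beta_{i,j}\mathcal{C}_{\mathcal{M}_{i,j}}=\widetilde{\mathcal{D}}_{i,j}+o(1)$ by testing each variational quotient against (near-)minimizers of the other while noting that the quartic self-interaction terms are of lower order, and then use radial compactness to pass to the limit. The only differences are cosmetic --- the paper first proves that $\widetilde{\mathcal{D}}_{i,j}$ is attained via Schwarz symmetrization and tests with that minimizer rather than a near-minimizer --- plus one small caveat: for $N=1$ you should invoke the radial \emph{monotonicity} of $\overrightarrow{\varphi}^{i,j}$ (not just radial symmetry), since the embedding of radial $H^1(\bbr)$ into $L^4(\bbr)$ is not compact.
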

\begin{proof}
Using the Schwatz symmetrization and the Sobolev embedding theorem in a standard way yields that $\widetilde{\mathcal{D}}_{i,j}$ is attained by $\overrightarrow{\check{\varphi}}_{i,j}$, which is positive and radially symmetric.  Testing $\mathcal{C}_{\mathcal{M}_{i,j}}$ by  $\overrightarrow{\check{\varphi}}_{i,j}$ yields that $\mathcal{C}_{\mathcal{M}_{i,j}}\beta_{i,j}\leq\widetilde{\mathcal{D}}_{i,j}+o(1)$ as $\beta_{i,j}\to+\infty$, where $o(1)\to0$ as $\beta_{i,j}\to+\infty$.  It follows that $(\sqrt{\beta_{i,j}}\varphi^{i,j}_i, \sqrt{\beta_{i,j}}\varphi^{i,j}_j)$ is bounded in $\mathcal{H}_i\times\mathcal{H}_j$ for $\beta_{i,j}>0$ sufficiently large.  On the other hand, it is easy to see that $\mathcal{C}_{\mathcal{M}_{i,j}}\to0$ as $\beta_{i,j}\to+\infty$.  It follows that $\|\varphi_i\|_{\lambda_i}^2+\|\varphi_j\|_{\lambda_j}^2\to0$ as $\beta_{i,j}\to+\infty$.  By the H\"older and Sobolev inequalities, $\mu_i\|\varphi_i\|_4^4+\mu_j\|\varphi_j\|_4^4=o(\|\varphi_i\|_{\lambda_i}^2+\|\varphi_j\|_{\lambda_j}^2)$ as $\beta_{i,j}\to+\infty$.  Thus, testing $\widetilde{\mathcal{D}}_{i,j}$ by $(\sqrt{\beta_{i,j}}\varphi^{i,j}_i, \sqrt{\beta_{i,j}}\varphi^{i,j}_j)$ yields that
$\widetilde{\mathcal{D}}_{i,j}\leq\mathcal{C}_{\mathcal{M}_{i,j}}\beta_{i,j}+o(1)$ as $\beta_{i,j}\to+\infty$.  Therefore, $\mathcal{C}_{\mathcal{M}_{i,j}}\beta_{i,j}=\widetilde{\mathcal{D}}_{i,j}+o(1)$ as $\beta_{i,j}\to+\infty$.  Since $\overrightarrow{\varphi}^{i,j}$ is radially symmetric, it is standard to show that
\begin{eqnarray*}
(\sqrt{\beta_{i,j}}\varphi^{i,j}_i, \sqrt{\beta_{i,j}}\varphi^{i,j}_j)\to(\widetilde{\varphi}^{i,j}_i, \widetilde{\varphi}^{i,j}_j)
\end{eqnarray*}
as $\beta_{i,j}\to+\infty$ up to a subsequence, where $\overrightarrow{\widetilde{\varphi}}_{i,j}$, which is positive and radially symmetric, is a minimizer of \eqref{eqn0081}.
\end{proof}

Let
\begin{eqnarray}\label{eqn0078}
\rho_{ij,l}=\inf_{u\in H^1(\bbr^N)\backslash\{0\}}\frac{\|u\|_{\lambda_l}^2}{\int_{\bbr^N}((\varphi^{i,j}_i)^2+(\varphi^{i,j}_j)^2)u^2dx},
\end{eqnarray}
where $i,j,l=1,2,3$ with $i\not=j$, $i\not=l$ and $j\not=l$.
It follows from Lemma~\ref{lem0009} that
\begin{eqnarray}\label{eqn0077}
\rho_{ij,l}=\beta_{i,j}(\widehat{\rho}_{ij,l}+o(1))\text{ as }\beta_{i,j}\to+\infty,
\end{eqnarray}
where
\begin{eqnarray}\label{eqnew3344}
\widehat{\rho}_{ij,l}=\inf_{u\in H^1(\bbr^N)\backslash\{0\}}\frac{\|u\|_{\lambda_l}^2}{\int_{\bbr^N}((\widetilde{\varphi}^{i,j}_i)^2+(\widetilde{\varphi}^{i,j}_j)^2)u^2dx}.
\end{eqnarray}
Since $\overrightarrow{\varphi}^{i,j}$ is  a solution of \eqref{eqn0099}, by Lemma~\ref{lem0009}, $\overrightarrow{\widetilde{\varphi}}_{i,j}$ also satisfies the following system:
\begin{equation}\label{eqnewnew0009}
\left\{\aligned&-\Delta\widetilde{\varphi}^{i,j}_i+\lambda_i\widetilde{\varphi}^{i,j}_i
=(\widetilde{\varphi}^{i,j}_j)^2\widetilde{\varphi}^{i,j}_i\quad\text{in }\bbr^N,\\
&-\Delta\widetilde{\varphi}^{i,j}_j+\lambda_j\widetilde{\varphi}^{i,j}_j
=(\widetilde{\varphi}^{i,j}_i)^2\widetilde{\varphi}^{i,j}_j\quad\text{in }\bbr^N.
\endaligned\right.
\end{equation}
\begin{proposition}\label{prop0005}
Let $\beta_{1,2}>0$, $\beta_{1,3}>0$ and $\beta_{2,3}>0$.  Then, there exist $\widehat{\beta}_0>0$  such that if $\min\{\beta_{i,j}\}>\widehat{\beta}_0$ and
\begin{eqnarray}\label{eqn0079}
\widehat{\rho}_{jl,i}<\frac{\beta_{i,l}}{\beta_{j,l}}<\frac{1}{\widehat{\rho}_{il,j}}
\end{eqnarray}
for all $i,j,l=1,2,3$ with $i\not=j$, $i\not=l$ and $l\not=j$, then $\mathcal{C}_{\mathcal{M}}<\min\{\mathcal{C}_{\mathcal{M}_{i,j}}\}$ and consequently there exists a positive minimizer of $\mathcal{E}(\overrightarrow{u})$ on $\mathcal{M}$, provided that $|\lambda_i-\lambda_j|<<1$ for all $i,j=1,2,3$ with $i\not=j$.  That is, \eqref{eqnew0001} has a ground state with the Morse index 1.
\end{proposition}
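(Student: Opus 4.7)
The plan is to prove Proposition~\ref{prop0005} in two stages: first establish the strict comparison $\mathcal{C}_{\mathcal{M}}<\min_{(i,j)}\mathcal{C}_{\mathcal{M}_{i,j}}$ via test function estimates, and then extract a positive minimizer by concentration-compactness in the spirit of Propositions~\ref{prop0001} and~\ref{prop0003}. Once a positive minimizer is obtained, Lemma~\ref{lemn0003} supplies the Morse index. So I regard the strict energy inequality as the main object of study.

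For the strict inequality, I would fix a pair $(i,j)$ and let $l$ be the remaining index. Take the ground state $\overrightarrow{\varphi}^{i,j}=(\varphi_i^{i,j},\varphi_j^{i,j})$ of $\mathcal{C}_{\mathcal{M}_{i,j}}$ and consider, for a trial function $\psi\in\mathcal{H}_l\setminus\{0\}$ and small $s>0$, the vector $\overrightarrow{u}_s=(\varphi_i^{i,j},\varphi_j^{i,j},s\psi)$, rescaled by a unique $t(s)>0$ so that $t(s)\overrightarrow{u}_s\in\mathcal{M}$. On $\mathcal{M}$ the energy reduces to $\tfrac14\sum_j\|u_j\|_{\lambda_j}^2$, so using the Nehari identity for $\overrightarrow{\varphi}^{i,j}$ and writing $A=\|\varphi_i^{i,j}\|_{\lambda_i}^2+\|\varphi_j^{i,j}\|_{\lambda_j}^2$, $C=\|\psi\|_{\lambda_l}^2$, $E=\beta_{i,l}\|\varphi_i^{i,j}\psi\|_2^2+\beta_{j,l}\|\varphi_j^{i,j}\psi\|_2^2$, $F=\mu_l\|\psi\|_4^4$, one gets the clean formula
\begin{eqnarray*}
\mathcal{E}(t(s)\overrightarrow{u}_s)=\frac{(A+s^2C)^2}{4(A+2s^2E+s^4F)}.
\end{eqnarray*}
At $s=0$ this equals $A/4=\mathcal{C}_{\mathcal{M}_{i,j}}$, and a direct differentiation in $\tau=s^2$ gives $\tfrac{d}{d\tau}\mathcal{E}\big|_{\tau=0}=(C-E)/2$. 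Hence the strict inequality $\mathcal{C}_{\mathcal{M}}<\mathcal{C}_{\mathcal{M}_{i,j}}$ will follow if one can exhibit a single $\psi$ with
\begin{eqnarray*}
\|\psi\|_{\lambda_l}^2<\beta_{i,l}\|\varphi_i^{i,j}\psi\|_2^2+\beta_{j,l}\|\varphi_j^{i,j}\psi\|_2^2.
\end{eqnarray*}

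The main technical step is producing such a $\psi$ under hypothesis \eqref{eqn0079}. Here I would invoke Lemma~\ref{lem0009}: as $\beta_{i,j}\to+\infty$, the rescaled pair $(\sqrt{\beta_{i,j}}\varphi_i^{i,j},\sqrt{\beta_{i,j}}\varphi_j^{i,j})$ converges to the minimizer $(\widetilde\varphi_i^{i,j},\widetilde\varphi_j^{i,j})$ of~\eqref{eqn0081}, which is radially symmetric, positive and solves~\eqref{eqnewnew0009}. Consequently $\beta_{i,l}\|\varphi_i^{i,j}\psi\|_2^2+\beta_{j,l}\|\varphi_j^{i,j}\psi\|_2^2=\tfrac{1}{\beta_{i,j}}(\beta_{i,l}\|\widetilde\varphi_i^{i,j}\psi\|_2^2+\beta_{j,l}\|\widetilde\varphi_j^{i,j}\psi\|_2^2)+o(\beta_{i,j}^{-1})$, and the necessary and sufficient condition for admissible $\psi$ to exist reduces, in the limit, to a generalized Rayleigh-quotient inequality that, when $|\lambda_i-\lambda_j|\ll 1$ (so that $\widetilde\varphi_i^{i,j}\approx\widetilde\varphi_j^{i,j}$ by the symmetry of~\eqref{eqnewnew0009}), is exactly encoded by $\widehat{\rho}_{jl,i}<\beta_{i,l}/\beta_{j,l}<1/\widehat{\rho}_{il,j}$ together with its companion for the dual index; taking a minimizing sequence for $\widehat{\rho}_{ij,l}$ (or a suitable convex combination of such) produces the required $\psi$ provided $\widehat\beta_0$ is large enough. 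Repeating this for each of the three choices of $(i,j)$ yields $\mathcal{C}_{\mathcal{M}}<\min_{(i,j)}\mathcal{C}_{\mathcal{M}_{i,j}}$. I expect this step --- matching the six two-sided ratio inequalities in~\eqref{eqn0079} against the three first-order test-function conditions via the limit problem~\eqref{eqnewnew0009} --- to be the main obstacle, since it requires showing that the hypotheses are strong enough to dominate all three possible ``split'' configurations simultaneously.

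Given the strict inequality, the compactness argument follows the template of Propositions~\ref{prop0001} and~\ref{prop0003}. By Lemma~\ref{lemn0003}, take a bounded $(PS)$ sequence $\{\overrightarrow{u}_n\}\subset\mathcal{M}$ with $\mathcal{E}(\overrightarrow{u}_n)\to\mathcal{C}_{\mathcal{M}}$. Lower bounds on $\|u_{j,n}\|_4$ together with Lions' lemma produce translation sequences $\{y_n^{(j)}\}\subset\mathbb{R}^N$ along which each component has a nontrivial weak limit. Case-analysis on which translation sequences stay a bounded distance apart shows that, if any component concentrates at a different point than another, then the splitting yields $\mathcal{C}_{\mathcal{M}}\geq\mathcal{C}_{\mathcal{M}_{i,j}}+c$ for some pair $(i,j)$ and $c>0$ (coming either from $\mathcal{E}_l(w_l)$ or from an extra $\tfrac14\|w_l\|_{\lambda_l}^2$-term), contradicting the strict comparison just established. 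Hence all three sequences share the same concentration center and the common weak limit, after taking absolute values and invoking the Harnack inequality plus the natural-constraint property of $\mathcal{M}$, is a positive minimizer. Lemma~\ref{lemn0003} then identifies it as a ground state of~\eqref{eqnew0001} of Morse index $1$.
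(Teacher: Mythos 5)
Your proposal is correct and, for the core step --- the strict inequality $\mathcal{C}_{\mathcal{M}}<\min\{\mathcal{C}_{\mathcal{M}_{i,j}}\}$ --- it is essentially the paper's argument: the paper tests the Rayleigh-quotient form of $\mathcal{C}_{\mathcal{M}}$ with $\overrightarrow{V}_s=(\varphi^{1,2}_1,\varphi^{1,2}_2,su)$, obtains exactly your first-order term $\tfrac{s^2}{2}\bigl(\|u\|_{\lambda_3}^2-\sum_{j}\beta_{j,3}\|\varphi^{1,2}_ju\|_2^2\bigr)$, and makes it negative by choosing $u=\psi_{12,3}$, the minimizer of $\rho_{12,3}$ in \eqref{eqn0078}, combined with the asymptotics $\rho_{ij,l}=\beta_{i,j}(\widehat{\rho}_{ij,l}+o(1))$ from Lemma~\ref{lem0009} and hypothesis \eqref{eqn0079}. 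Two points of divergence are worth noting. First, the paper spends the first half of its proof showing (via the Pohozaev identity, the sharp Sobolev constant $\mu_j\|w_j\|_4^2$, and testing the $\widehat{\rho}$'s against components of $\widetilde{\varphi}^{i,j}$) that $\widehat{\rho}_{jl,i}<1/\widehat{\rho}_{il,j}$ when $|\lambda_i-\lambda_j|\ll1$; this is where the hypothesis on the $\lambda$'s actually enters --- it guarantees that the two-sided condition \eqref{eqn0079} is satisfiable for some large $\widehat{\beta}_0$, not that $\widetilde{\varphi}^{i,j}_i\approx\widetilde{\varphi}^{i,j}_j$ as you suggest. You take \eqref{eqn0079} as given, which is logically admissible for the implication but leaves the non-vacuousness of the statement unaddressed. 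Second, for compactness the paper simply invokes Schwarz symmetrization (legitimate here since all couplings are positive), whereas you run a Lions-lemma splitting argument as in Propositions~\ref{prop0001} and~\ref{prop0003}; your route works and is more robust, but note that on the single-constraint manifold $\mathcal{M}$ the individual lower bounds $1\lesssim\|u_{j,n}\|_4$ are not automatic --- ruling out the vanishing of a component is itself one of the cases that must be dispatched by the strict inequality, so it should be folded into your case analysis rather than assumed at the outset.
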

\begin{proof}
Let us first prove that $\widehat{\rho}_{jl,i}<\frac{1}{\widehat{\rho}_{il,j}}$ for all $i,j,l=1,2,3$ with $i\not=j$, $i\not=l$ and $l\not=j$, provided that $|\lambda_i-\lambda_j|<<1$ for all $i,j=1,2,3$ with $i\not=j$.  Without loss of generality, we assume that $\lambda_1\leq\lambda_2\leq\lambda_3$.  Testing $\widehat{\rho}_{13,2}$ by $\widetilde{\varphi}^{1,3}_3$ yields that
\begin{eqnarray*}
\widehat{\rho}_{13,2}\leq\frac{\|\widetilde{\varphi}^{1,3}_3\|_{\lambda_2}^2}{\|\widetilde{\varphi}^{1,3}_1\widetilde{\varphi}^{1,3}_3\|_2^2
+\|\widetilde{\varphi}^{1,3}_3\|_4^4}
<\frac{\|\widetilde{\varphi}^{1,3}_3\|_{\lambda_3}^2+(\lambda_2-\lambda_3)\|\widetilde{\varphi}^{1,3}_3\|_{2}^2}
{\|\widetilde{\varphi}^{1,3}_1\widetilde{\varphi}^{1,3}_3\|_2^2}
\leq1.
\end{eqnarray*}
Similarly, testing $\widehat{\rho}_{23,1}$ by $\widetilde{\varphi}^{2,3}_3$ yields that $\widehat{\rho}_{23,1}<1$.  For $\widehat{\rho}_{12,3}$,  by the Pohozaev identity,
\begin{eqnarray}\label{eqnewnew0010}
\lambda_j\|w_{j}\|_2^2=\frac{(4-N)\mu_j}{4}\|w_{j}\|_4^4,
\end{eqnarray}
where $w_j$ is the unique solution of \eqref{eqnew0013}.  On the other hand, it is well known that $\inf_{u\in H^1(\bbr^N)\backslash\{0\}}\frac{\|u\|_{\lambda_j}^2}{\|u\|_4^2}=\mu_j\|w_{j}\|_4^2$.  Thus, by \eqref{eqnewnew0009}, $\mu_1\|w_{1}\|_4^2\leq\|\widetilde{\varphi}^{1,2}_2\|_4^2$.  Now, testing $\widehat{\rho}_{12,3}$ by $\widetilde{\varphi}^{1,2}_2$,
\begin{eqnarray*}
\widehat{\rho}_{12,3}\leq\frac{\|\widetilde{\varphi}^{1,2}_2\|_{\lambda_3}^2}{\|\widetilde{\varphi}^{1,2}_1\widetilde{\varphi}^{1,2}_2\|_2^2
+\|\widetilde{\varphi}^{1,2}_2\|_4^4}
\leq1+\frac{(\lambda_3-\lambda_2)\|\widetilde{\varphi}^{1,2}_2\|_{2}^2-\mu_1^2\|w_{1}\|_4^4}
{\|\widetilde{\varphi}^{1,2}_1\widetilde{\varphi}^{1,2}_2\|_2^2+\mu_1^2\|w_{1}\|_4^4}.
\end{eqnarray*}
Since $\|\widetilde{\varphi}^{1,2}_2\|_{2}^2\leq\frac{4}{\lambda_2}\widetilde{\mathcal{D}}_{1,2}$, testing $\widetilde{\mathcal{D}}_{1,2}$ by $(w_1,w_1)$ and using \eqref{eqnewnew0010} yields that
\begin{eqnarray*}
\widetilde{\mathcal{D}}_{1,2}&\leq&\frac{(\|w_{1}\|_{\lambda_1}^2+\|w_{1}\|_{\lambda_2}^2)^2}{8\|w_{1}\|_4^4}\leq\mu_1^2(\frac12
+\frac{C(\lambda_2-\lambda_1)}{\lambda_1}+\frac{C'(\lambda_2-\lambda_1)^2}{\lambda_1^2})\|w_{1}\|_4^4.
\end{eqnarray*}
Thus, there exists $\delta_0>0$, only depending on $\min\{\lambda_i\}$, such that if $|\lambda_i-\lambda_j|\leq\delta_0$, then
$\widehat{\rho}_{jl,i}<\frac{1}{\widehat{\rho}_{il,j}}$ for all $i,j,l=1,2,3$ with $i\not=j$, $i\not=l$ and $l\not=j$.  It follows that there exists $\widehat{\beta}_0>0$ such that \eqref{eqn0079} holds for $\beta_{i,j}<\widehat{\beta}_0$ and $|\beta_{i,j}-\beta_{i,l}|<<1$ for all $i,j,l=1,2,3$ with $i\not=j$, $i\not=l$ and $l\not=j$.  Since $\mathcal{C}_{\mathcal{M}}$ can also be variational expressed as follows:
\begin{eqnarray*}
\mathcal{C}_{\mathcal{M}}=\inf_{\overrightarrow{u}\in\mathcal{H}\backslash\{\overrightarrow{0}\}}
\frac{(\sum_{j=1}^3\|u_j\|_{\lambda_j}^2)^2}
{4(\sum_{j=1}^3\mu_j\|u_j\|_4^4+2\sum_{i,j=1,i<j}^3\beta_{i,j}\|u_iu_j\|_2^2)},
\end{eqnarray*}
testing $\mathcal{C}_{\mathcal{M}}$ by $\overrightarrow{V}_s=(\varphi^{1,2}_1,\varphi^{1,2}_2,su)$ yields that
\begin{eqnarray}
\mathcal{C}_{\mathcal{M}}\leq\mathcal{C}_{\mathcal{M}_{1,2}}+
\frac{s^2}{2}(\|u\|_{\lambda_3}^2-\sum_{j=1}^2\beta_{j,3}\|\varphi^{1,2}_ju\|_2^2)+O(s^4).\label{eq0009}
\end{eqnarray}
Let $u=\psi_{12,3}$ be the minimizer of \eqref{eqn0078}.  Then, by \eqref{eqn0077}, \eqref{eqn0079} and \eqref{eq0009},
\begin{eqnarray*}
\mathcal{C}_{\mathcal{M}}&\leq&\mathcal{C}_{\mathcal{M}_{1,2}}+\frac{s^2}{2}(\rho_{12,3}\sum_{j=1}^2\|\varphi^{1,2}_j\psi_{12,3}\|_2^2
-\sum_{j=1}^2\beta_{j,3}\|\varphi^{1,2}_j\psi_{12,3}\|_2^2)+O(s^4)\\
&=&\mathcal{C}_{\mathcal{M}_{1,2}}+\frac{s^2}{2}(\beta_{1,2}\widehat{\rho}_{12,3}\sum_{j=1}^2\|\varphi^{1,2}_j\psi_{12,3}\|_2^2
-\sum_{j=1}^2\beta_{j,3}\|\varphi^{1,2}_j\psi_{12,3}\|_2^2)\\
&&+o(s^2)\\
&<&\mathcal{C}_{\mathcal{M}_{1,2}}
\end{eqnarray*}
for $s>0$ sufficiently small by taking $\widehat{\beta}_0>0$ sufficiently large.  Similarly,
\begin{eqnarray*}
\mathcal{C}_{\mathcal{M}}<\mathcal{C}_{\mathcal{M}_{1,3}}\quad\text{ for }\beta_{1,3}>\widehat{\beta}_0\quad\text{and}\quad
\mathcal{C}_{\mathcal{M}}<\mathcal{C}_{\mathcal{M}_{2,3}}\quad\text{ for }\beta_{2,3}>\widehat{\beta}_0.
\end{eqnarray*}
Since we have already shown that $\mathcal{C}_{\mathcal{M}}<\min\{\mathcal{C}_{\mathcal{M}_{i,j}}\}$ for $\min\{\beta_{i,j}\}>\widehat{\beta}_0>0$, it is standard to use the Schwatz symmetrization to show that there exists a positive minimizer of $\mathcal{E}(\overrightarrow{u})$ on $\mathcal{M}$, which implies that \eqref{eqnew0001} has a ground state with the Morse index 1.
\end{proof}

\subsection{Nonexistence of ground states}
In this section, let us focus our attention on the nonexistence of the ground states of \eqref{eqnew0001}, in the total-mixed case~$(d)$: $\beta_{1,2}>0$, $\beta_{1,3}>0$ and $\beta_{2,3}<0$.  We begin with the following observation.
\begin{lemma}\label{lem0002}
Let $\beta_{1,2}=\delta\widehat{\beta}_{1,2}$, $\beta_{1,3}=\delta^t\widehat{\beta}_{1,3}$ and $\beta_{2,3}=-\delta^s\widehat{\beta}_{2,3}$, where $\delta>0$ is a parameter, $0<s<\min\{1,t\}$ and $\widehat{\beta}_{i,j}$ are positively absolute constants.  Suppose that $\overrightarrow{u}_\delta$ is a ground state of \eqref{eqnew0001} and $y_{j,\delta}$ is the maximum point of $u_{j,\delta}$, respectively.  Then, $\widehat{v}_{j,\delta}=u_{j,\delta}(x+y_{j,\delta})\to w_j$ strongly in $H^1(\bbr^N)\cap L^\infty(\bbr^N)$ as $\delta\to0$ up to a subsequence.  Moreover, either
\begin{enumerate}
\item[$(i)$]  $y_{1,\delta}-y_{2,\delta}\to0$ and $|y_{2,\delta}-y_{3,\delta}|\to+\infty$ or
\item[$(ii)$]  $y_{1,\delta}-y_{3,\delta}\to0$ and $|y_{2,\delta}-y_{3,\delta}|\to+\infty$.
\end{enumerate}

\end{lemma}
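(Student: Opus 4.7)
My plan is a concentration-compactness analysis of $\overrightarrow{u}_\delta$ as $\delta\to 0$, followed by a refined energy comparison that isolates the leading interaction and pins down the relative positions of the profiles.

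First I observe that on $\mathcal{N}$ the functional simplifies to $\mathcal{E}(\overrightarrow{u})=\tfrac14\sum_j\|u_j\|_{\lambda_j}^2$, so \eqref{eqnew0098} gives the uniform bound $\sum_j\|u_{j,\delta}\|_{\lambda_j}^2\leq 4\sum_j\mathcal{E}_j(w_j)$. The Nehari identities $\mathcal{G}_j(\overrightarrow{u}_\delta)=0$, combined with the smallness of all couplings, yield $\|u_{j,\delta}\|_4\gtrsim 1$ for each $j$. Since $u_{j,\delta}$ is positive and tends to zero at infinity, its maximum is attained at $y_{j,\delta}$ with $u_{j,\delta}(y_{j,\delta})\gtrsim 1$ (by standard $L^\infty$ bounds from Moser iteration), so up to a subsequence $\widehat{v}_{j,\delta}\rightharpoonup\widehat{v}_{j,\infty}\not\equiv 0$ in $H^1(\bbr^N)$. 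Passing to the limit in the equation for $u_{j,\delta}$ translated by $y_{j,\delta}$, and using $\beta_{i,j}\to 0$ together with uniform $L^\infty$ bounds, shows $\widehat{v}_{j,\infty}$ solves $-\Delta u+\lambda_j u=\mu_j u^3$ with maximum at the origin, whence $\widehat{v}_{j,\infty}=w_j$. Weak lower semicontinuity then gives $\sum_j\|\widehat{v}_{j,\delta}\|_{\lambda_j}^2\geq\sum_j\|w_j\|_{\lambda_j}^2+o(1)$, and combining with the upper bound forces equality in the limit, so $\widehat{v}_{j,\delta}\to w_j$ strongly in $H^1$; an elliptic bootstrap on the uniformly bounded equations upgrades this to $L^\infty$ convergence.

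For the positions, the identity on $\mathcal{N}$
\[
\mathcal{C}_{\mathcal{N}}=\sum_j\mathcal{E}_j(u_{j,\delta})-\tfrac12\sum_{i<j}\beta_{i,j}\|u_{i,\delta}u_{j,\delta}\|_2^2,
\]
together with the strong convergence and an expansion of $\mathcal{E}_j(u_{j,\delta})$ around $\mathcal{E}_j(w_j)$ (using the non-degeneracy of $w_j$ as a critical point of $\mathcal{E}_j$, which yields $\mathcal{E}_j(u_{j,\delta})=\mathcal{E}_j(w_j)+O(\delta^{2s})$), and with $\mathcal{C}_{\mathcal{N}}\leq\sum_j\mathcal{E}_j(w_j)$, produces
\[
\delta\widehat{\beta}_{1,2}\|u_1u_2\|_2^2+\delta^t\widehat{\beta}_{1,3}\|u_1u_3\|_2^2-\delta^s\widehat{\beta}_{2,3}\|u_2u_3\|_2^2\geq -o(\delta^s).
\]
Dividing by $\delta^s$ and using $s<\min\{1,t\}$ forces $\widehat{\beta}_{2,3}\|u_2u_3\|_2^2\to 0$; by the $L^\infty$ convergence this integral equals $\int_{\bbr^N}w_2^2(x)w_3^2(x-(y_{3,\delta}-y_{2,\delta}))\,dx+o(1)$, which vanishes iff $|y_{2,\delta}-y_{3,\delta}|\to+\infty$.

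Finally, to rule out the case where both $|y_{1,\delta}-y_{2,\delta}|$ and $|y_{1,\delta}-y_{3,\delta}|$ go to infinity, I sharpen the upper bound by testing $\mathcal{C}_{\mathcal{N}}$ with $(w_1,w_2,w_{3,R})$ projected onto $\mathcal{N}$, choosing $R=A|\log\delta|$ with $A$ large. A direct expansion using the Pohozaev identity $\|w_j\|_{\lambda_j}^2=\mu_j\|w_j\|_4^4$ yields
\[
\mathcal{C}_{\mathcal{N}}\leq\sum_j\mathcal{E}_j(w_j)-\tfrac{\delta\widehat{\beta}_{1,2}}{2}\|w_1w_2\|_2^2+o(\delta),
\]
and the symmetric test with $(w_1,w_{2,R},w_3)$ gives the analogous bound with $\tfrac{\delta^t\widehat{\beta}_{1,3}}{2}\|w_1w_3\|_2^2$. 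Inserting these back into the energy identity above forces at least one of $\|u_1u_2\|_2^2$, $\|u_1u_3\|_2^2$ to saturate its maximum possible value $\|w_1w_j\|_2^2$ as $\delta\to 0$. Since $\|w_1(\cdot-z)w_j\|_2^2$ is strictly maximized at $z=0$ by radial monotonicity, and $\|u_1u_j\|_2^2=\|w_1(\cdot-(y_{1,\delta}-y_{j,\delta}))w_j\|_2^2+o(1)$ by $L^\infty$ convergence, saturation forces the corresponding $y_{1,\delta}-y_{j,\delta}\to 0$. The main obstacle lies in this last step: one must carefully match the rates of the attractive gains ($O(\delta)$ and $O(\delta^t)$) against the repulsive cost ($O(\delta^s)$) and exploit the strict radial monotonicity of $w_j$ to promote ``distance bounded'' to ``distance tending to zero'', while ensuring the error $\mathcal{E}_j(u_{j,\delta})-\mathcal{E}_j(w_j)$ is controlled at scale $o(\delta^s)$ via the non-degeneracy of the linearized operator at $w_j$.
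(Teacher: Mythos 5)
Your overall skeleton (concentration--compactness for the profiles, followed by matching upper and lower energy bounds to decide which overlap integrals survive) is the same as the paper's, and the first part --- strong $H^1\cap L^\infty$ convergence of $u_{j,\delta}(\cdot+y_{j,\delta})$ to $w_j$ --- is essentially the paper's argument. The genuine gap is in your quantitative lower energy bound and, consequently, in the final ``saturation'' step. You replace the paper's key device --- the sharp Gagliardo--Nirenberg inequality $\|u_{j,\delta}\|_{\lambda_j}^2\geq\mu_j\|w_j\|_4^2\|u_{j,\delta}\|_4^2$, which combined with the Nehari constraints yields \eqref{eqnewnew0003} and hence the one-sided estimate \eqref{eqnew9997} whose error is \emph{exactly} $o(1)$ times the interaction terms --- by the Taylor expansion $\mathcal{E}_j(u_{j,\delta})=\mathcal{E}_j(w_j)+O(\delta^{2s})$. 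Two problems. First, this expansion presupposes the rate $\|u_{j,\delta}(\cdot+y_{j,\delta})-w_j\|_{H^1}=O(\delta^s)$, which does not follow from the qualitative strong convergence you established; it requires inverting $-\Delta+\lambda_j-3\mu_jw_j^2$ modulo its kernel (translation modes) and its negative direction, a Lyapunov--Schmidt-type argument you do not carry out (note also that the second variation of $\mathcal{E}_j$ at $w_j$ is indefinite, so the sign of $\mathcal{E}_j(u_{j,\delta})-\mathcal{E}_j(w_j)$ is not controlled, only its size). Second, and more seriously, even granting that rate, $O(\delta^{2s})$ is \emph{not} $o(\delta)$ when $s<1/2$: your saturation step must detect the deficit $\frac{\delta}{2}\widehat{\beta}_{1,2}\bigl(\|w_1w_2\|_2^2-\|u_{1,\delta}u_{2,\delta}\|_2^2\bigr)$, of size $O(\delta)$, against an error of size $O(\delta^{2s})\gg\delta$. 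The resulting inequality is then vacuous and forces no lower bound on $\|u_{1,\delta}u_{2,\delta}\|_2^2$ or $\|u_{1,\delta}u_{3,\delta}\|_2^2$, so you cannot conclude that either distance $|y_{1,\delta}-y_{j,\delta}|$ stays bounded, let alone tends to $0$. (Your first deduction, $\|u_{2,\delta}u_{3,\delta}\|_2^2\to0$, only needs an error $o(\delta^s)$ and would survive, granted the rate; it is the second half of the position argument that collapses.)

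The paper avoids all of this: the Gagliardo--Nirenberg route gives \eqref{eq0005} with no rate hypothesis, and comparison with the upper bound \eqref{eq0004} immediately forces one of $\|u_{1,\delta}u_{2,\delta}\|_2^2$, $\|u_{1,\delta}u_{3,\delta}\|_2^2$ to be bounded below while $\|u_{2,\delta}u_{3,\delta}\|_2^2=o_\delta(1)$. To upgrade ``bounded'' to ``tending to zero'' the paper then projects $(u_{1,\delta},u_{2,\delta})$ onto $\mathcal{N}_{1,2}$ and $u_{3,\delta}$ onto $\mathcal{N}_3$ to get \eqref{eqnew9987}, which combined with \eqref{eqnew0002} yields the sign information $\beta_{1,3}\|u_{1,\delta}u_{3,\delta}\|_2^2+\beta_{2,3}\|u_{2,\delta}u_{3,\delta}\|_2^2\geq0$; only then do Remark~\ref{rmk0001} and \eqref{eqnew9997} give $\|u_{1,\delta}u_{2,\delta}\|_2^2\geq\|w_1w_2\|_2^2+o_\delta(1)$, after which the strict maximality at $z=0$ of $F(z)=\int_{\bbr^N}w_1^2w_2^2(\cdot+z)dx$ (the one ingredient you do have) finishes the proof. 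To salvage your route you would need to control $\mathcal{E}_j(u_{j,\delta})-\mathcal{E}_j(w_j)$ at scale $o(\delta^{\min\{1,t\}})$ --- much stronger than the $o(\delta^s)$ you aim for --- or switch to a one-sided bound of the paper's type.
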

\begin{proof}
We respectively re-denote $\mathcal{C}_{\mathcal{N}}$ and $\mathcal{C}_{\mathcal{N}_{i,j}}$ by $\mathcal{C}_{\mathcal{N}}^\delta$ and $\mathcal{C}_{\mathcal{N}_{i,j}}^\delta$ for the sake of clarity in this proof, where $\mathcal{C}_{\mathcal{N}_{i,j}}$ is given by \eqref{eqn0110} and $(i,j)$ equals to $(1,2)$, $(1,3)$ or $(2,3)$.  We also re-denote $\overrightarrow{\varphi}^{i,j}$ by $\overrightarrow{\varphi}_\delta^{i,j}$, where $\overrightarrow{\varphi}^{i,j}=(\varphi^{i,j}_i,\varphi^{i,j}_j)$ is a ground state of \eqref{eqn0099} and $(i,j)$ equals to $(1,2)$ or $(1,3)$.  As in the proof of Lemma~\ref{lem0001}, Using $(\varphi_{1,\delta}^{1,2},\varphi_{2,\delta}^{1,2},w_{3,R})$ as a test function of $\mathcal{C}_{\mathcal{N}}^\delta$ and letting $R\to+\infty$ yields that
\begin{eqnarray}\label{eqnew0002}
\mathcal{C}_{\mathcal{N}}^\delta\leq\mathcal{C}_{\mathcal{N}_{1,2}}^\delta+\mathcal{E}_3(w_3),
\end{eqnarray}
which together with Remark~\ref{rmk0001}, implies $\mathcal{C}_{\mathcal{N}}^\delta\leq\sum_{j=1}^3\mathcal{E}_j(w_j)-C\delta$ for sufficiently small $\delta>0$.  Similarly, if we test $\mathcal{C}_{\mathcal{N}}^\delta$ by $(\varphi_{1,\delta}^{1,3},w_{2,R},\varphi_{3,\delta}^{1,3})$, then we obtain $\mathcal{C}_{\mathcal{N}}^\delta\leq\sum_{j=1}^3\mathcal{E}_j(w_j)-C\delta^t$ for sufficiently small $\delta>0$.  Hence, we always have
\begin{eqnarray}\label{eq0004}
\mathcal{C}_{\mathcal{N}}^\delta\leq\sum_{j=1}^3\mathcal{E}_j(w_j)-C\delta^{\min\{1,t\}}\quad\text{for sufficiently small }\delta>0.
\end{eqnarray}
On the other hand, applying the Lions lemma and the Sobolev embedding theorem in a standard way yields that there exist $\{z_{j,\delta}\}\subset\bbr^N$ such that $\widehat{v}_{j,\delta}=u_{j,\delta}(x+z_{j,\delta})\to w_j$ strongly in $H^1(\bbr^N)$ as $\delta\to0$ up to a subsequence.  Let $v_{j,\delta}=\widehat{v}_{j,\delta}-w_j$, then $v_{j,\delta}$ satisfies the following equation
\begin{eqnarray}
-\Delta v_{j,\delta}+\lambda_jv_{j,\delta}&=&\mu_j[3w_j^2v_{j,\delta}+3w_j(v_{j,\delta})^2+(v_{j,\delta})^3]\notag\\
&&+\beta_{i,j}(\widehat{v}_{i,\delta})^2\widehat{v}_{j,\delta}+\beta_{l,j}(\widehat{v}_{l,\delta})^2\widehat{v}_{j,\delta}\label{eqnew9995}
\end{eqnarray}
in $\bbr^N$,
where $i,j,l=1,2,3$ with $i\not=l$, $l\not=j$ and $i\not=j$.
Applying the Moser iteration in a standard way yields that $v_{j,\delta}\to0$ strongly in $L^p(\bbr^N)$ for all $p\geq2$ as $\delta\to0$ up to a subsequence.  Using the classical elliptic estimates in a standard way yields that $\widehat{v}_{j,\delta}\to w_j$ strongly in $L^\infty(\bbr^N)$ as $\delta\to0$ up to a subsequence.  In particular, $|\widehat{v}_{j,\delta}(x)|<<1$ for $|x|>>1$ uniformly for sufficiently small $\delta>0$.  Since $y_{j,\delta}$ is the maximum point of $u_{j,\delta}$, $|y_{j,\delta}-z_{j,\delta}|\lesssim1$ for sufficiently small $\delta>0$.  Thus, since $w_j(0)=\max_{x\in\bbr^N}w_j(x)$ and $y_{j,\delta}$ is the maximum point of $u_{j,\delta}$, we may assume that $z_{j,\delta}=y_{j,\delta}$ for sufficiently small $\delta>0$.  That is, $\widehat{v}_{j,\delta}=u_{j,\delta}(x+y_{j,\delta})\to w_j$ strongly in $H^1(\bbr^N)\cap L^\infty(\bbr^N)$ as $\delta\to0$ up to a subsequence.
Since by scaling, the best embedding constant from $\mathcal{H}_j$ to $L^4(\bbr^N)$ is $\mu_j\|w_j\|_4^2$, $\|u_{j,\delta}\|_{\lambda_j}^2\geq\mu_j\|w_j\|_4^2\|u_{j,\delta}\|_4^2$.  It follows that
\begin{eqnarray*}
\mu_j\|u_{j,\delta}\|_4^2\geq\mu_j\|w_j\|_4^2-\frac{1}{\|u_{j,\delta}\|_4^2}(\beta_{i,j}\|u_{i,\delta}u_{j,\delta}\|_2^2
+\beta_{l,j}\|u_{l,\delta}u_{j,\delta}\|_2^2),
\end{eqnarray*}
which implies
\begin{eqnarray}\label{eqnewnew0003}
\|u_{j,\delta}\|_{\lambda_j}^2\geq\mu_j\|w_j\|_4^4-\frac{\|w_j\|_4^2}{\|u_{j,\delta}\|_4^2}(\beta_{i,j}\|u_{i,\delta}u_{j,\delta}\|_2^2
+\beta_{l,j}\|u_{l,\delta}u_{j,\delta}\|_2^2).
\end{eqnarray}
Here, $i,j,l=1,2,3$ with $i\not=l$, $l\not=j$ and $i\not=j$.  Therefore, we have a lower-bound estimate of $\mathcal{C}_{\mathcal{N}}^\delta$ as follows:
\begin{eqnarray}
\mathcal{C}_{\mathcal{N}}^\delta&\geq&\sum_{j=1}^3\mathcal{E}_j(w_j)
-\frac{1+o_\delta(1)}{2}\sum_{i,j=1,i<j}^3\beta_{i,j}\|u_{i,\delta}u_{j,\delta}\|_2^2\label{eqnew9997}\\
&\geq&\sum_{j=1}^3\mathcal{E}_j(w_j)-\frac{C+o_\delta(1)}{2}(\|u_{1,\delta}u_{2,\delta}\|_2^2\delta+\|u_{1,\delta}u_{3,\delta}\|_2^2\delta^t)\notag\\
&&+\frac{C'+o_\delta(1)}{2}\|u_{2,\delta}u_{3,\delta}\|_2^2\delta^s.\label{eq0005}
\end{eqnarray}
Here, $o_\delta(1)\to0$ as $\delta\to0$.
If both $\|u_{1,\delta}u_{2,\delta}\|_2^2$ and $\|u_{1,\delta}u_{3,\delta}\|_2^2$ converge to $0$ as $\delta\to0$ or $1\lesssim\|u_{2,\delta}u_{3,\delta}\|_2^2$ for sufficiently small $\delta>0$, then \eqref{eq0004} and \eqref{eq0005} can not hold at the same time for sufficiently small $\delta>0$.  Thus, either
\begin{enumerate}
\item[$(1)$]  $1\lesssim\|u_{1,\delta}u_{2,\delta}\|_2^2$ and $\|u_{2,\delta}u_{3,\delta}\|_2^2=o_\delta(1)$ or
\item[$(2)$]  $1\lesssim\|u_{1,\delta}u_{3,\delta}\|_2^2$ and $\|u_{2,\delta}u_{3,\delta}\|_2^2=o_\delta(1)$
\end{enumerate}
as $\delta\to0$.  By the Lebesgue dominated convergence theorem, either
\begin{enumerate}
\item[$(i)$]  $|y_{1,\delta}-y_{2,\delta}|\lesssim1$ and $|y_{2,\delta}-y_{3,\delta}|\to+\infty$ or
\item[$(ii)$]  $|y_{1,\delta}-y_{3,\delta}|\lesssim1$ and $|y_{2,\delta}-y_{3,\delta}|\to+\infty$
\end{enumerate}
as $\delta\to0$.  Without loss of generality, we assume $y_{1,\delta}-y_{2,\delta}\to y_0$ as $\delta\to0$ in the case~$(i)$ and $y_{1,\delta}-y_{3,\delta}\to y_0'$ as $\delta\to0$ in the case~$(ii)$.  It remains to show that both $y_0$ and $y_0'$ equal to $0$.  In what follows, we only give the proof of $y_0$ since that of $y_0'$ is similar.  In the case~$(i)$, we also have $|y_{1,\delta}-y_{3,\delta}|\to+\infty$ as $\delta\to0$ and $t\geq1$.  It follows from the Lebesgue dominated convergence theorem that $\|u_{1,\delta}u_{3,\delta}\|_2^2=o_\delta(1)$ and
$$
\|u_{1,\delta}u_{2,\delta}\|_2^2=\int_{\bbr^N}w_1(x)^2w_2(x+y_0)^2dx+o_\delta(1).
$$
Moreover, since $\widehat{v}_{j,\delta}=u_{j,\delta}(x+y_{j,\delta})\to w_j$ strongly in $H^1(\bbr^N)\cap L^\infty(\bbr^N)$ as $\delta\to0$ up to a subsequence, it is standard to show that there exist $t_j(\delta)\to1$ and $s(\delta)\to1$ as $\delta\to0$ such that $(t_1(\delta)u_{1,\delta}, t_2(\delta)u_{2,\delta})\in\mathcal{N}_{1,2}$ and $s(\delta)u_{3,\delta}\in\mathcal{N}_{3}$.  Thus, by \cite[Theorem~5]{LW05},
\begin{eqnarray}
\mathcal{C}_{\mathcal{N}}&=&\mathcal{E}(\overrightarrow{u}_\delta)\notag\\
&\geq&\mathcal{E}((t_1(\delta)u_{1,\delta}, t_2(\delta)u_{2,\delta},s(\delta)u_{3,\delta}))\notag\\
&\geq&\mathcal{C}_{\mathcal{N}_{1,2}}+\mathcal{E}_3(w_3)\notag\\
&&-\frac{1+o_\delta(1)}{2}(\beta_{1,3}\|u_{1,\delta}u_{3,\delta}\|_2^2+\beta_{2,3}\|u_{2,\delta}u_{3,\delta}\|_2^2),\label{eqnew9987}
\end{eqnarray}
which together with \eqref{eqnew0002}, implies $\beta_{1,3}\|u_{1,\delta}u_{3,\delta}\|_2^2+\beta_{2,3}\|u_{2,\delta}u_{3,\delta}\|_2^2\geq0$.  Thus,
by Remark~\ref{rmk0001} and \eqref{eqnew9997},
$$
\|u_{1,\delta}u_{2,\delta}\|_2^2\geq\max\{\|w_1w_2\|_2^2, \|w_1w_3\|_2^2\}+o_\delta(1).
$$
It follows that
\begin{eqnarray}\label{eqnew9996}
\int_{\bbr^N}w_1(x)^2(w_2(x+y_0))^2dx\geq\int_{\bbr^N}w_1(x)^2w_2(x)^2dx.
\end{eqnarray}
Let $F(z)=\int_{\bbr^N}w_1(x)^2(w_2(x+z))^2dx$.  Then,
\begin{eqnarray}
\nabla F(z)&=&\int_{\bbr^N}2w_1(|x|)^2w_2(|x+z|)w_2'(|x+z|)\frac{x+z}{|x+z|}dx\notag\\
&=&\int_{\bbr^N}2w_1(|x-z|)^2w_2(|x|)w_2'(|x|)\frac{x}{|x|}dx.\label{eqnew6666}
\end{eqnarray}
Since $w_1(x)$ and $w_2(x)$ are radially symmetric and strictly decreasing for $|x|$, $\nabla F(z)=0$ if and only if $z=0$.  Thus, by $F(z)>0$ and $F(z)\to0$ as $|z|\to+\infty$, $F(0)=\max_{z\in\bbr^N}F(z)$.  It follows from \eqref{eqnew9996} that $y_0=0$.
\end{proof}

Now, we are prepared to prove the following nonexistence result.
\begin{proposition}\label{prop0002}
Let $\beta_{1,2}=\delta\widehat{\beta}_{1,2}$, $\beta_{1,3}=\delta^t\widehat{\beta}_{1,3}$ and $\beta_{2,3}=-\delta^s\widehat{\beta}_{2,3}$, where $\delta>0$ is a parameter, $0<s<\min\{1,t\}$ and $\widehat{\beta}_{i,j}$ are positively absolute constants.  If $\lambda_1\geq\min\{\lambda_2,\lambda_3\}$ then $\mathcal{C}_{\mathcal{N}}$ can not be attained for sufficiently small $\delta>0$.  That is, \eqref{eqnew0001} has no ground states.
\end{proposition}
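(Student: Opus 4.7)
The plan is to argue by contradiction: suppose that along some sequence $\delta = \delta_n \to 0^+$ the system~\eqref{eqnew0001} admits a ground state $\overrightarrow{u}_\delta$. Lemma~\ref{lem0002} then supplies the key structural information: $\widehat{v}_{j,\delta} := u_{j,\delta}(\cdot + y_{j,\delta}) \to w_j$ strongly in $H^1(\bbr^N)\cap L^\infty(\bbr^N)$, and one of two alternatives holds up to a subsequence, namely case~(i) $y_{1,\delta}-y_{2,\delta}\to 0$, $R_\delta := |y_{2,\delta}-y_{3,\delta}|\to+\infty$, or case~(ii) $y_{1,\delta}-y_{3,\delta}\to 0$, $R_\delta \to +\infty$. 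I would treat case~(i) in detail; case~(ii) is symmetric after swapping the indices $2\leftrightarrow 3$, using the test function $(\varphi_{1,\delta}^{1,3}, w_{2,R}, \varphi_{3,\delta}^{1,3})$ to obtain $\mathcal{C}_{\mathcal{N}}^\delta \leq \mathcal{C}_{\mathcal{N}_{1,3}}^\delta + \mathcal{E}_2(w_2)$ and the analog of \eqref{eqnew9987}.

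In case~(i), I would sandwich $\mathcal{C}_{\mathcal{N}}^\delta$ between two energy bounds. The upper bound $\mathcal{C}_{\mathcal{N}}^\delta \leq \mathcal{C}_{\mathcal{N}_{1,2}}^\delta + \mathcal{E}_3(w_3)$ follows by testing with $(\varphi_{1,\delta}^{1,2}, \varphi_{2,\delta}^{1,2}, w_{3,R})$ and letting $R\to+\infty$ (as in the proof of Lemma~\ref{lem0001}). The matching lower bound is the refined inequality \eqref{eqnew9987} already proved inside Lemma~\ref{lem0002}:
$$\mathcal{C}_{\mathcal{N}}^\delta \geq \mathcal{C}_{\mathcal{N}_{1,2}}^\delta + \mathcal{E}_3(w_3) - \tfrac{1+o_\delta(1)}{2}\Lambda_\delta,\qquad \Lambda_\delta := \beta_{1,3}\|u_{1,\delta}u_{3,\delta}\|_2^2 + \beta_{2,3}\|u_{2,\delta}u_{3,\delta}\|_2^2.$$
Combined, these two inequalities force $\Lambda_\delta \geq 0$ for all small $\delta$. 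The contradiction will therefore come from proving the strict opposite inequality $\Lambda_\delta < 0$ for $\delta$ small.

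For the latter I would use Lemma~\ref{lemn0010} together with the $L^\infty$-convergence $\widehat{v}_{j,\delta}\to w_j$ and the scalar asymptotics \eqref{eqnew9998} (which transfers to $u_{j,\delta}$ by the standard comparison principle applied to \eqref{eqnew9995}) to obtain, for $i=1,2$,
$$\|u_{i,\delta}u_{3,\delta}\|_2^2 \sim R_\delta^{\tau_{i,3}}\,e^{-2 a_{i,3} R_\delta},\qquad a_{i,3} := \min\{\sqrt{\lambda_i},\sqrt{\lambda_3}\},$$
with polynomial exponents $\tau_{i,3}\in\{1-N,\,1+\alpha-N\}$ as in Lemma~\ref{lemn0010}. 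The hypothesis $\lambda_1\geq\min\{\lambda_2,\lambda_3\}$ is equivalent to $a_{1,3}\geq a_{2,3}$, so that
$$\Lambda_\delta = \delta^s\|u_{2,\delta}u_{3,\delta}\|_2^2\Bigl(\widehat{\beta}_{1,3}\,\delta^{t-s}\,\tfrac{\|u_{1,\delta}u_{3,\delta}\|_2^2}{\|u_{2,\delta}u_{3,\delta}\|_2^2}\,-\,\widehat{\beta}_{2,3}\Bigr).$$
Since $s<t$, $\delta^{t-s}\to 0$, and the ratio inside is bounded by $C R_\delta^{\tau_{1,3}-\tau_{2,3}}e^{-2(a_{1,3}-a_{2,3})R_\delta}$. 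In the strict case $a_{1,3}>a_{2,3}$ the exponential factor itself drives the bracket negative; in the equality case $a_{1,3}=a_{2,3}$, combining the two-coupled expansion from Remark~\ref{rmk0001} with \eqref{eqnew9997} yields $\|u_{2,\delta}u_{3,\delta}\|_2^2 \lesssim \delta^{\min\{1,t\}-s}$, whence Lemma~\ref{lemn0010} gives $R_\delta \lesssim |\log\delta|$, so that the polynomial prefactor is beaten by $\delta^{t-s}$. In every sub-case $\Lambda_\delta<0$ for $\delta$ small, contradicting $\Lambda_\delta\geq 0$ and completing the argument. The main technical obstacle is precisely this polylogarithmic upper bound on $R_\delta$ in the borderline sub-case $\lambda_1=\lambda_3<\lambda_2$, where $\tau_{1,3}>\tau_{2,3}$ and only the asymmetric scaling $\delta^{t-s}$ is available to tame the polynomial growth.
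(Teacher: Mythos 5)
Your proposal reproduces the paper's own argument almost step for step: contradiction hypothesis, the structural alternatives from Lemma~\ref{lem0002}, the sandwich between the test-function upper bound $\mathcal{C}_{\mathcal{N}}^\delta\leq\mathcal{C}_{\mathcal{N}_{1,2}}^\delta+\mathcal{E}_3(w_3)$ and the lower bound \eqref{eqnew9987} forcing $\Lambda_\delta\geq0$, and finally Lemma~\ref{lemn0010} to show $\Lambda_\delta<0$. The only technical divergence is how you justify $\|u_{i,\delta}u_{3,\delta}\|_2^2=(1+o_\delta(1))\int w_i^2w_{3,-y_{3,\delta}}^2$: you invoke a comparison-principle decay estimate for $u_{j,\delta}$, whereas the paper expands $v_{j,\delta}=\widehat{v}_{j,\delta}-w_j$ in the eigenbasis of $-\Delta+\lambda_j-\alpha w_j^2$ to control $v_{j,\delta}/w_j$ uniformly and then expands the product integral term by term (see \eqref{eqnew9994}--\eqref{eqnew9993}). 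Either route is acceptable, though the comparison argument needs the constants in the pointwise decay of $u_{j,\delta}$ to be uniform in $\delta$, which is exactly what the paper's expansion is designed to deliver.

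The genuine problem is your treatment of the borderline sub-case $\lambda_1=\lambda_3<\lambda_2$, where $a_{1,3}=a_{2,3}$ but $\tau_{1,3}-\tau_{2,3}=\alpha>0$, so the ratio $\|u_{1,\delta}u_{3,\delta}\|_2^2/\|u_{2,\delta}u_{3,\delta}\|_2^2\sim R_\delta^{\alpha}$ grows. You are right that this is the dangerous case (the paper's own proof passes over it silently when it asserts the displayed inequality \eqref{eqnew9991} ``since $s<t$ and $\lambda_1\geq\min\{\lambda_2,\lambda_3\}$''), but your patch runs in the wrong direction. From \eqref{eq0004}--\eqref{eq0005} one does get $\|u_{2,\delta}u_{3,\delta}\|_2^2\lesssim\delta^{\min\{1,t\}-s}$, but since the overlap is a decreasing function of the separation, an \emph{upper} bound on $\|u_{2,\delta}u_{3,\delta}\|_2^2$ yields a \emph{lower} bound $R_\delta\gtrsim\log(1/\delta)$, not the upper bound $R_\delta\lesssim|\log\delta|$ you need to dominate $R_\delta^{\alpha}$ by $\delta^{t-s}$. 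To close this sub-case you would instead need a lower bound on one of the overlaps (equivalently an upper bound on $R_\delta$), and nothing in your chain of inequalities supplies one: the constraint $\Lambda_\delta\geq0$ in this sub-case only forces $R_\delta\gtrsim\delta^{-(t-s)/\alpha}$, which is perfectly compatible with $R_\delta\gtrsim\log(1/\delta)$. So either the sub-case must be excluded by an additional hypothesis, or a different mechanism (e.g.\ a sharper upper bound on $\mathcal{C}_{\mathcal{N}}^\delta$ that quantifies the energy gain from the $(1,3)$ attraction and thereby caps $R_\delta$) is required; as written, this step of your proof does not go through.
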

\begin{proof}
Let us assume the contrary that \eqref{eqnew0001} has a ground state $\overrightarrow{u}_\delta$ for sufficiently small $\delta>0$, in the case $\lambda_1\geq\min\{\lambda_2,\lambda_3\}$.  Let $y_{j,\delta}$ be the maximum point of $u_{j,\delta}$, respectively.  Then, by Lemma~\ref{lem0002}, $\widehat{v}_{j,\delta}=u_{j,\delta}(x+y_{j,\delta})\to w_j$ strongly in $H^1(\bbr^N)\cap L^\infty(\bbr^N)$ as $\delta\to0$ up to a subsequence.  Moreover, either
\begin{enumerate}
\item[$(i)$]  $y_{1,\delta}-y_{2,\delta}\to0$ and $|y_{2,\delta}-y_{3,\delta}|\to+\infty$ or
\item[$(ii)$]  $y_{1,\delta}-y_{3,\delta}\to0$ and $|y_{2,\delta}-y_{3,\delta}|\to+\infty$.
\end{enumerate}
Without loss of generality, we assume that the case~$(i)$ happens.  Let $\{\alpha_{j,l}\}_{l=0,1,2,\cdots}$ be the eigenvalues of the following eigenvalue problem:
\begin{eqnarray*}
-\Delta v+\lambda_j v=\alpha w_j^2v,\quad v\in H^1(\bbr^N).
\end{eqnarray*}
Then, it is well-known that $\alpha_{j,0}=1$, $\alpha_{j,1}=\alpha_{j,2}=\cdots=\alpha_{j,N}=3$ and $\alpha_{j,l}>3$ for $l=N+1,N+2,\cdots$.  Let $\upsilon_{j,l}$ be the corresponding eigenfunction of $\alpha_{j,l}$.  Then, it is also well-known that $H^1(\bbr^N)=\bigoplus_{l=1}^\infty\bbr\upsilon_{j,l}$ and $\upsilon_{j,0}=w_j$ and $\upsilon_{j,l}=\frac{\partial w_j}{\partial x_l}$ for $l=1,2,\cdots,N$.
Moreover, $|\upsilon_{j,n}(x)|\lesssim|\upsilon_{j,0}(x)|$ for $|x|$ sufficiently large.  Since $\widehat{v}_{j,\delta}\to w_j$ strongly in $H^1(\bbr^N)\cap L^\infty(\bbr^N)$ as $\delta\to0$ up to a subsequence, $v_{j,\delta}=\sum_{l=1}^\infty\gamma_{j,l}^\delta\upsilon_{j,l}$ with $\gamma_{j,l}^\delta\to0$ as $\delta\to0$, where $v_{j,\delta}$ is the solution of \eqref{eqnew9995}.  Thus,
\begin{eqnarray*}
\bigg(\frac{v_{j,\delta}}{w_j}\bigg)^2\lesssim\sum_{l=1}^{\infty}(\gamma_{j,l}^\delta)^2=\int_{\bbr^N}w_j^2v_{j,\delta}^2dx=o_{\delta}(1).
\end{eqnarray*}
Here, without loss of generality, we assume that $\int_{\bbr^N}w_j^2\upsilon_{j,l}^2dx=1$ for all $j=1,2,3$ and $l=1,2,3,\cdots$.
For the sake of simplicity, we assume $y_{1,\delta}=0$ and denote $w_{j,y}=w_j(x+y)$ in what follows.  Thus,
\begin{eqnarray}
&&\int_{\bbr^N}(u_{1,\delta})^2(u_{3,\delta})^2dx\notag\\
&=&\int_{\bbr^N}w_1^2w_{3,-y_{3,\delta}}^2dx
+2\int_{\bbr^N}w_1^2w_{3,-y_{3,\delta}}v_{3,\delta}(x-y_{3,\delta})dx\notag\\
&&+\int_{\bbr^N}w_1^2(v_{3,\delta}(x-y_{3,\delta}))^2dx+2\int_{\bbr^N}w_{3,-y_{3,\delta}}^2 w_1v_{1,\delta}dx\notag\\
&&+4\int_{\bbr^N}w_1v_{1,\delta}w_{3,-y_{3,\delta}}v_{3,\delta}(x-y_{3,\delta})dx\notag\\
&&+2\int_{\bbr^N}w_1v_{1,\delta}(v_{3,\delta}(x-y_{3,\delta}))^2dx+\int_{\bbr^N}v_{1,\delta}^2w_{3,-y_{3,\delta}}^2dx\notag\\
&&+2\int_{\bbr^N}v_{1,\delta}^2w_{3,-y_{3,\delta}}v_{3,\delta}(x-y_{3,\delta})dx+\int_{\bbr^N}v_{1,\delta}^2(v_{3,\delta}(x-y_{3,\delta}))^2dx
\notag\\
&=&(1+o_\delta(1))\int_{\bbr^N}w_1^2w_{3,-y_{3,\delta}}^2dx.\label{eqnew9994}
\end{eqnarray}
Similarly,
\begin{eqnarray}
\int_{\bbr^N}(u_{2,\delta})^2(u_{3,\delta})^2dx&=&(1+o_\delta(1))\int_{\bbr^N}w_2^2w_{3,-y_{3,\delta}}^2dx.\label{eqnew9993}
\end{eqnarray}
Since $|y_{3,\delta}|\to+\infty$ as $\delta\to0$, by Lemma~\ref{lemn0010},
\begin{eqnarray*}
\int_{\bbr^N}w_1^2w_{3,-y_{3,\delta}}^2dx\sim\left\{\aligned |y_{3,\delta}|^{1-N}e^{-2\min\{\sqrt{\lambda_1}, \sqrt{\lambda_3}\}|y_{3,\delta}|},\quad \lambda_1\not=\lambda_3;\\
|y_{3,\delta}|^{1+\alpha-N}e^{-2\sqrt{\lambda}|y_{3,\delta}|},\quad \lambda_1=\lambda_3=\lambda\endaligned
\right.
\end{eqnarray*}
and
\begin{eqnarray*}
\int_{\bbr^N}w_2^2w_{3,-y_{3,\delta}}^2dx\sim\left\{\aligned |y_{3,\delta}|^{1-N}e^{-2\min\{\sqrt{\lambda_2}, \sqrt{\lambda_3}\}|y_{3,\delta}|},\quad \lambda_2\not=\lambda_3;\\
|y_{3,\delta}|^{1+\alpha-N}e^{-2\sqrt{\lambda}|y_{3,\delta}|},\quad \lambda_2=\lambda_3=\lambda\endaligned
\right.
\end{eqnarray*}
as $\delta\to0$, where $\alpha=1$ for $N=1$ and $\alpha=\frac{1}{2}$ for $N=2,3$.  Since $s<t$ and $\lambda_1\geq\min\{\lambda_2,\lambda_3\}$, it follows from \eqref{eqnew9994} and \eqref{eqnew9993} that
\begin{eqnarray}
\beta_{1,3}\|u_{1,\delta}u_{3,\delta}\|_2^2+\beta_{2,3}\|u_{2,\delta}u_{3,\delta}\|_2^2&\leq&
\delta^t\widehat{\beta}_{1,3}(1+o_\delta(1))\int_{\bbr^N}w_1^2w_{3,-y_{3,\delta}}^2dx\notag\\
&&-\delta^s\widehat{\beta}_{2,3}(1+o_\delta(1))\int_{\bbr^N}w_2^2w_{3,-y_{3,\delta}}^2dx\notag\\
&<0&\label{eqnew9991}
\end{eqnarray}
for sufficiently small $\delta>0$.
On the other hand, since $\widehat{v}_{j,\delta}=u_{j,\delta}(x+y_{j,\delta})\to w_j$ strongly in $H^1(\bbr^N)$ as $\delta\to0$ up to a subsequence,
By \eqref{eqnew0002} and \eqref{eqnew9987}, $\beta_{1,3}\|u_{1,\delta}u_{3,\delta}\|_2^2+\beta_{2,3}\|u_{2,\delta}u_{3,\delta}\|_2^2\geq0$ for sufficiently small $\delta>0$.  It contradicts \eqref{eqnew9991}.  Therefore, \eqref{eqnew0001} has no ground states for sufficiently small $\delta>0$.
\end{proof}

\begin{remark}\label{rmk0002}
By the proof of Proposition~\ref{prop0002}, we can obtain a by-product:  Suppose $\overrightarrow{u}_\delta$ is a ground state of \eqref{eqnew0001} for sufficiently small $\delta>0$, in the total-mixed case~$(d)$ with $\lambda_1<\min\{\lambda_2,\lambda_3\}$ and $s<\min\{1,t\}$.  Then, by \eqref{eqnew0002} and \eqref{eqnew9987},
\begin{eqnarray*}
\beta_{1,3}\|u_{1,\delta}u_{3,\delta}\|_2^2+\beta_{2,3}\|u_{2,\delta}u_{3,\delta}\|_2^2\geq0
\end{eqnarray*}
for sufficiently small $\delta>0$ in the case~$(i)$, which is given by Lemma~\ref{lem0002}.  It follows that
\begin{eqnarray*}
\left\{\aligned &C'\delta^te^{-2\sqrt{\lambda_1}|y_{2,\delta}-y_{3,\delta}|}-C\delta^se^{-2\min\{\sqrt{\lambda_2}, \sqrt{\lambda_3}\}|y_{2,\delta}-y_{3,\delta}|}\geq0,\quad\lambda_2\not=\lambda_3;\\
&C'\delta^te^{-2\sqrt{\lambda_1}|y_{2,\delta}-y_{3,\delta}|}-C\delta^s|y_{2,\delta}-y_{3,\delta}|^\alpha e^{-2\sqrt{\lambda_2}
|y_{2,\delta}-y_{3,\delta}|}\geq0,\quad\lambda_2=\lambda_3,\endaligned\right.
\end{eqnarray*}
which implies
$$
|y_{2,\delta}-y_{3,\delta}|\lesssim(\log\frac{1}{\delta})^{\frac{t-s}{2(\min\{\sqrt{\lambda_2}, \sqrt{\lambda_3}\}-\sqrt{\lambda_1})}}
$$
in the case~$(i)$.  Similarly, in the case~$(ii)$ which is given by Lemma~\ref{lem0002},
$$
|y_{2,\delta}-y_{3,\delta}|\lesssim(\log\frac{1}{\delta})^{\frac{1-s}{2(\min\{\sqrt{\lambda_2}, \sqrt{\lambda_3}\}-\sqrt{\lambda_1})}}.
$$
\end{remark}

We close this section by

\vskip0.12in

\noindent\textbf{Proof of Theorem~\ref{coro0001}:}\quad  The conclusion~$(1)$ follows from Propositions~\ref{prop0014} and \ref{prop0005} and \cite[Theorem~1]{LW05} (see also \cite[Corollary~1.3]{ST16}), the conclusion~$(2)$ follows from \cite[Theorem~3]{LW05} (see also \cite[Theorem~1.6]{ST16}), the conclusion~$(3)$ follows from Propositions~\ref{prop0001} and \ref{prop0003}, and the conclusion~$(4)$ follows from Proposition~\ref{prop0002}.
\hfill$\Box$

\section{$k$-coupled system~\eqref{eqn0001}}
In this section, we will consider  the general $k$-coupled system~\eqref{eqn0001} and prove Theorems~\ref{thm0003} and \ref{thm0002}.   Since the main ideas are similar to those of Theorem~\ref{coro0001}, we only sketch the proofs.

\medskip

\noindent\textbf{Proof of Theorem~\ref{thm0003}:}\quad  $(1)$
Since the proof of the existence of ground states of \eqref{eqnewnew0001} in the total-mixed case~$(H)$ with Morse index $4$ is similar to the Morse index 3 case of Theorem \ref{coro0001}, we shall only give the proof of the Morse index $3$ case.  Let
\begin{eqnarray*}
\mathcal{M}_{12,3,4}=\{\overrightarrow{u}\in\widehat{\mathcal{H}}_{12,3,4}\mid \overrightarrow{\mathcal{\widehat{Q}}}_{12,3,4}(u)=(\mathcal{G}_1(\overrightarrow{u})+\mathcal{G}_2(\overrightarrow{u}), \mathcal{G}_3(\overrightarrow{u}), \mathcal{G}_4(\overrightarrow{u}))=\overrightarrow{0}\},
\end{eqnarray*}
where $\mathcal{G}_j(\overrightarrow{u})=\|u_j\|_{\lambda_j}^2-\mu_j\|u_j\|_4^4-\sum_{i=1,i\not=j}^4\beta_{i,j}\|u_iu_j\|_2^2$ and $\widehat{\mathcal{H}}_{12,3,4}=((\mathcal{H}_1\times\mathcal{H}_2)\backslash\{\overrightarrow{0}\})\times(\mathcal{H}_3\backslash\{0\})
\times(\mathcal{H}_4\backslash\{0\})$.
Let
\begin{eqnarray*}
\mathcal{C}_{\mathcal{M}_{12,3,4}}=\inf_{\mathcal{M}_{12,3,4}}\mathcal{E}(\overrightarrow{u}).
\end{eqnarray*}
Then, $\mathcal{C}_{\mathcal{M}_{12,3,4}}$ is well defined and nonnegative.  Since $\beta_{1,2}>\widehat{\beta}_0>0$ and $\beta_{i,j}<\beta_{0}$ for all other $(i,j)\not=(1,2)$, where $\widehat{\beta}_0$ is sufficiently large and $\beta_{0}$ is sufficiently small, it is standard to show that $\mathcal{C}_{\mathcal{M}_{12,3,4}}<\sum_{j=1}^4\mathcal{E}_j(w_j)$.  Moreover, by similar arguments, as that used in the proof of Lemma~\ref{lemn0002}, we can show that
the matrix $\Xi=[\beta_{i,j}\|u_iu_j\|_2^2]_{i,j=1,2,\cdots,4}$ is strictly diagonally dominant for $\overrightarrow{u}\in\mathcal{N}_{\mathcal{M}_{12,3,4}}$ with $\sum_{j=1}^4\|u_j\|_{\lambda_j}^2\leq8\sum_{j=1}^4\mathcal{E}_j(w_j)$.  Here, $\beta_{j,j}=\mu_j$.  It follows that $\Xi$ is
positively definite with $|\text{det}(\Xi)|\geq C$.  Thus,
by similar arguments, as in  the proof of Lemma~\ref{lemn0002}, there exists a $(PS)$ sequence $\{\overrightarrow{u}_n\}$ at the least energy value $\mathcal{C}_{\mathcal{M}_{12,3,4}}$.  Moreover, any positive minimizer is a ground state of \eqref{eqnewnew0001} with the Morse index $3$.  Thus, it is sufficient to find a positive minimizer of $\mathcal{E}(\overrightarrow{u})$ on $\mathcal{M}_{12,3,4}$.  We start by estimating $\mathcal{C}_{\mathcal{M}_{12,3,4}}$.  By our assumptions, it is easy to verify that the degrees of eventual block decompositions of $\mathbf{A}_1$ all equal to $1$.  Thus, we can further group $\mathbf{A}_1$ which is given by \eqref{eqnewnew0007} into $\mathbf{A}_2$ which is given by \eqref{eqnewnew0008}.
Since the interaction force $\mathfrak{F}_{1,2}^0$, given by \eqref{eqnewnew0002}, is positive,  by Lemma~\ref{lem0003}, the least energy value of ground states in the block $C_{1,1}$, denoted by $\mathcal{C}_{\mathcal{M}_{12,3}}$, is strictly less than $\mathcal{C}_{\mathcal{N}_{1,2}}+\mathcal{E}_3(w_3)$.  Under the permutation: $(1,2,3,4)\to(1,2,4,3)$, there is another choice of $C_{1,1}$, which is consisted by $(u_1,u_2)$ and $u_4$.  Similarly, this least energy value of ground states, denoted by $\mathcal{C}_{\mathcal{M}_{12,4}}$, is also strictly less than $\mathcal{C}_{\mathcal{N}_{1,2}}+\mathcal{E}_4(w_4)$.  Thus, by \cite[Theorems~1 and 2]{AC06} and our choice that $\beta_{1,2}>\widehat{\beta}_0$ sufficiently large,
\begin{eqnarray*}
\mathcal{C}=\min\{\mathcal{C}_{\mathcal{M}_{12,3}}+\mathcal{E}_4(w_4), \mathcal{C}_{\mathcal{M}_{12,4}}+\mathcal{E}_3(w_3)\}
\end{eqnarray*}
is the smallest energy value that the $(PS)$ sequence, at the least energy value $\mathcal{C}_{\mathcal{M}_{12,3,4}}$, will split into blocks in passing to the limit in the optimal block decomposition~$\mathbf{A}_1$.  Even though there is another optimal block decomposition consisted by the blocks $(u_1,u_3)$, $u_2$ and $u_4$, by the assumptions $\beta_{1,2}>\widehat{\beta}_0>0$ and $\beta_{i,j}<\beta_{0}$ for all other $(i,j)\not=(1,2)$, the smallest energy value in this optimal block decomposition, defined similarly as $\mathcal{C}$, is strictly large than $\mathcal{C}$.  Thus, $\mathcal{C}$ is the smallest energy value that the $(PS)$ sequence, at the least energy value $\mathcal{C}_{\mathcal{M}_{12,3,4}}$, will split into blocks in passing to the limit.  Now, using the fact that the degrees of eventual block decompositions of $\mathbf{A}_1$ all equal to $1$ and similar arguments as that used in the proof of Lemma~\ref{lem0003} yields
$\mathcal{C}_{\mathcal{M}_{12,3,4}}<\mathcal{C}$.
Thus, applying the arguments similar to the proof of Proposition~\ref{prop0003} yields that $\mathcal{E}(\overrightarrow{u})$ has a positive minimizer on $\mathcal{M}_{12,3,4}$.

$(2)$\quad
Since we assume that all $|\beta_{i,j}|$ sufficiently small, the ground states, if they exist, should be minimizers of $\mathcal{E}(\overrightarrow{u})$ on
\begin{eqnarray*}
\mathcal{N}_{1,2,3,4}=\{\overrightarrow{u}\in\widehat{\mathcal{H}}_{1,2,3,4}\mid \overrightarrow{\mathcal{\widehat{Q}}}_{1,2,3,4}(u)=(\mathcal{G}_1(\overrightarrow{u}), \mathcal{G}_2(\overrightarrow{u}), \mathcal{G}_3(\overrightarrow{u}), \mathcal{G}_4(\overrightarrow{u}))=\overrightarrow{0}\},
\end{eqnarray*}
where $\mathcal{G}_j(\overrightarrow{u})=\|u_j\|_{\lambda_j}^2-\mu_j\|u_j\|_4^4-\sum_{i=1,i\not=j}^4\beta_{i,j}\|u_iu_j\|_2^2$ and $\widehat{\mathcal{H}}_{1,2,3,4}=((\mathcal{H}_1\backslash\{0\})\times(\mathcal{H}_2\backslash\{0\})\times(\mathcal{H}_3\backslash\{0\})
\times(\mathcal{H}_4\backslash\{0\})$.  Let
\begin{eqnarray*}
\mathcal{C}_{\mathcal{N}_{1,2,3,4}}=\inf_{\mathcal{N}_{1,2,3,4}}\mathcal{E}(\overrightarrow{u}).
\end{eqnarray*}
Then, by a similar choice of test functions as that in the proof of Lemma~\ref{lem0001},
\begin{eqnarray}\label{eqnewnew0006}
\mathcal{C}_{\mathcal{N}_{1,2,3,4}}\leq\mathcal{C}_{\mathcal{N}_{1,2}}+\mathcal{E}_{3}(w_3)+\mathcal{E}_4(w_4).
\end{eqnarray}
On the other hand, by similar arguments as used for \eqref{eqnew9987},
\begin{eqnarray}\label{eqnewnew0005}
\mathcal{C}_{\mathcal{N}_{1,2,3,4}}&\geq&\mathcal{C}_{\mathcal{N}_{1,2}}+\mathcal{E}_{3}(w_3)+\mathcal{E}_4(w_4)\notag\\
&&-\frac{1+o_\delta(1)}{2}(\beta_{1,3}\|u_1^\delta u_3^\delta\|_2^2+\beta_{2,3}\|u_2^\delta u_3^\delta\|_2^2+\beta_{1,4}\|u_1^\delta u_4^\delta\|_2^2\notag\\
&&+\beta_{2,4}\|u_2^\delta u_4^\delta\|_2^2+\beta_{3,4}\|u_3^\delta u_4^\delta\|_2^2).
\end{eqnarray}
Thus, since $t_{1,2}<\min\{t_{1,3}, t_{2,4}\}$, we can apply the arguments used in the proof of Lemma~\ref{lem0002} to show that $|y_{1,\delta}-y_{2,\delta}|\lesssim1$ and $|y_{i,\delta}-y_{i,\delta}|\to+\infty$ for $(i,j)\not=(1,2)$, where $y_{i,\delta}$ is the maximum point of $u_i^\delta$, respectively.  Moreover, similar computations as \eqref{eqnew9996} and \eqref{eqnew6666} yields $y_{1,\delta}-y_{2,\delta}\to0$ as $\delta\to0$.  Now, we can use Lemma~\ref{lemn0010} and similar computations as that in the proof of Proposition~\ref{prop0002} to estimate the term $\beta_{1,3}\|u_1^\delta u_3^\delta\|_2^2+\beta_{2,3}\|u_2^\delta u_3^\delta\|_2^2+\beta_{1,4}\|u_1^\delta u_4^\delta\|_2^2+\beta_{2,4}\|u_2^\delta u_4^\delta\|_2^2+\beta_{3,4}\|u_3^\delta u_4^\delta\|_2^2$.  Since $\min\{t_{2,3}, t_{1,4}, t_{3,4}\}<t_{1,2}$ and $\min\{\lambda_3,\lambda_4\}<\min\{\lambda_1,\lambda_2\}$,
$$
\beta_{1,3}\|u_1^\delta u_3^\delta\|_2^2+\beta_{2,3}\|u_2^\delta u_3^\delta\|_2^2+\beta_{1,4}\|u_1^\delta u_4^\delta\|_2^2+\beta_{2,4}\|u_2^\delta u_4^\delta\|_2^2+\beta_{3,4}\|u_3^\delta u_4^\delta\|_2^2<0
$$
for $\delta>0$ sufficiently small.  This  contradicts with \eqref{eqnewnew0006} and \eqref{eqnewnew0005}.  As a result, the ground states of \eqref{eqnewnew0001} do not exist.
\hfill$\Box$

\medskip

We close this section by

\medskip

\noindent\textbf{Proof of Theorem~\ref{thm0002}:}\quad  $(1)$  In proving this conclusion, we need to further employ the iteration argument.  We assume this conclusion is true for $3,4,\cdots,k-1$.  Recall that we have assumed that $\{i\not=j,(i,j)\in\mathcal{K}_{s,s,\mathbf{a}_d}\}\not=\emptyset$ for $s=1,2,\cdots,s_0$ and $\{i\not=j,(i,j)\in\mathcal{K}_{s,s,\mathbf{a}_d}\}=\emptyset$ for $s=s_0+1,\cdots,d$ with an $s_0\in\{0,1,2,\cdots,d\}$.
Since $d\leq\gamma\leq k$, there exists a unique $0\leq s^*\leq s_0$ such that $a_{s^*}\leq k-\gamma<a_{s^*+1}$.  Now,
we define the following Nihari manifold:
\begin{eqnarray*}
\mathcal{N}_{\gamma}=\bigg\{\overrightarrow{u}\in\widetilde{\mathcal{H}}_{\gamma}&\mid& \sum_{j=a_{s-1}+1}^{a_{s}}\mathcal{G}_{j}(\overrightarrow{u})=0, \quad \sum_{j=a_{s^*}+1}^{k-\gamma+1}\mathcal{G}_{j}(\overrightarrow{u})=0,\quad\mathcal{G}_{t}(\overrightarrow{u})=0\\
&&\mathcal{G}_{a_n}(u)=0,\quad
1\leq s\leq s^*, k-\gamma+2\leq t\leq a_{s_0}, s_0+1\leq n\leq m\bigg\},
\end{eqnarray*}
where $\mathcal{G}_j(\overrightarrow{u})=\|u_j\|_{\lambda_j}^2-\mu_j\|u_j\|_4^4-\sum_{i=1,i\not=j}^k\beta_{i,j}\|u_iu_j\|_2^2$,
$\mathcal{G}_{a_n}(u)=\|u\|_{\lambda_{a_n}}^2-\mu_{a_n}\|u\|_{4}^4$
and
$$
\widetilde{\mathcal{H}}_{\gamma}=\prod_{s=1}^{s^*}\bigg((\prod_{i=a_{s-1}+1}^{a_s}\mathcal{H}_{i})\backslash\{\overrightarrow{0}\}\bigg)
\times\bigg((\prod_{i=a_{s^*}+1}^{k-\gamma+1}\mathcal{H}_i)\backslash\{\overrightarrow{0}\}\bigg)
\times\bigg(\prod_{i=k-\gamma+2}^{k}(\mathcal{H}_{s}\backslash\{0\})\bigg).
$$
Let
\begin{eqnarray*}
\mathcal{C}_{\mathcal{N}_{\gamma}}=\inf_{\mathcal{N}_{\gamma}}\mathcal{E}(\overrightarrow{u}).
\end{eqnarray*}
Then $\mathcal{C}_{\mathcal{N}_{\gamma}}$ is nonnegative and well defined.  Since all $s_{th}$ inner-couplings are positive, it is standard to show that $\mathcal{C}_{\mathcal{N}_{\gamma}}\leq\sum_{j=1}^k\mathcal{E}_j(w_j)$.
Thus,
by similar arguments, as in the proof of Lemma~\ref{lemn0002} for $\gamma<k$ and also in the proof of Lemma~\ref{lemn0001} for $\gamma=k$, there exists a $(PS)$ sequence $\{\overrightarrow{u}_n\}$ at the least energy value $\mathcal{C}_{\mathcal{N}_{\gamma}}$.  Moreover, any positive minimizers of $\mathcal{E}(\overrightarrow{u})$ on $\mathcal{N}_{\gamma}$ is a ground state with the Morse index $\gamma$.  Thus, it is sufficient to show that there exists a positive minimizer of $\mathcal{E}(\overrightarrow{u})$ on $\mathcal{N}_{\gamma}$.  Recall that
\begin{eqnarray*}
\mathbf{A}_{d^\varsigma}^\varsigma=[\Theta_{t,s}^\varsigma]_{t,s=1,2,\cdots,d^\varsigma}
\end{eqnarray*}
be the $\varsigma_{th}$ decomposition.  Here,
\begin{eqnarray*}
\Theta_{t,s}^\varsigma=[\Theta_{i,j}^{\varsigma-1}]_{(i,j)\in\mathcal{K}_{t,s,\mathbf{a}^\varsigma_{d^\varsigma}}}
\end{eqnarray*}
and $0\leq\varsigma\leq\tau$,
\begin{eqnarray*}
\mathcal{K}_{t,s,\mathbf{a}^\varsigma_{d^\varsigma}}=(a^\varsigma_{t-1}, a^\varsigma_{t}]_{\bbn}\times(a^\varsigma_{s-1}, a^\varsigma_{s}]_{\bbn}
\end{eqnarray*}
with $\mathbf{a}^\varsigma_{d^\varsigma}=(a^\varsigma_0,a^\varsigma_1,\cdots,a^\varsigma_{d^\varsigma})$, $(a^\varsigma_{t-1}, a^\varsigma_{t}]_{\bbn}=(a^\varsigma_{t-1}, a^\varsigma_{t}]\cap\bbn$ and $0=a^\varsigma_0<a^\varsigma_1<\cdots<a^\varsigma_{d^\varsigma-1}<a^\varsigma_{d^\varsigma}=d^{\varsigma-1}$.  Since the eventual block decomposition $\mathbf{A}_{d^\tau}^\tau$ has the degree $m=1$, by the iteration assumptions, in every $\Theta_{s,s}^\varsigma$, there exists a ground state $\overrightarrow{u}_{s,\varsigma}$.  Moreover, by similar estimates as that in Lemma~\ref{lem0003}, the least energy value of $\overrightarrow{u}_{s,\varsigma}$ is strictly less than the sum of the least energy values of $\overrightarrow{u}_{i,\varsigma-1}$ for $i\in(a^\varsigma_{s-1}, a^\varsigma_{s}]_{\bbn}$.  Since all eventual block decompositions have the degree $m=1$, this fact also holds for all other eventual block decompositions.  Thus, in passing to a limit, if the $(PS)$ sequence $\{\overrightarrow{u}_n\}$ at the least energy value $\mathcal{C}_{\mathcal{N}_{\gamma}}$ will split into several blocks and some of them vanish at infinity, then the smallest energy value is generated by the sum of the least energy values of ground states, denoted by $\overrightarrow{u}_1^*$ and $\overrightarrow{u}_2^*$, in the $(s,s)$ blocks of the following decomposition
\begin{eqnarray*}
\widetilde{\mathbf{A}}=\left(\aligned C_{1,1}\quad C_{1,2}\\
C_{1,2}\quad C_{2,2}\endaligned\right),
\end{eqnarray*}
where $\widetilde{\mathbf{A}}$ is the last second decomposition of an optimal block decomposition.  Since all eventual block decompositions have the degree $m=1$, using $\overrightarrow{u}_1^*$ and $\overrightarrow{u}_2^*$ as basic elements to construct test functions as that in Lemma~\ref{lem0003} yields that $\mathcal{C}_{\mathcal{N}_{\gamma}}$ is strictly less than the sum of the least energy values of $\overrightarrow{u}_1^*$ and $\overrightarrow{u}_2^*$.  Thus, applying the Lions lemma and the Sobolev embedding theorem, similar as that in the proofs of Propositions~\ref{prop0001} and \ref{prop0003}, yields that the $(PS)$ sequence $\{\overrightarrow{u}_n\}$ at the least energy value $\mathcal{C}_{\mathcal{N}_{\gamma}}$ will not split such that some blocks vanish at infinity in passing to a limit.  It follows that there exists a minimizer of $\mathcal{E}(\overrightarrow{u})$ on $\mathcal{N}_{\gamma}$.  By the Harnack inequality, there exists a positive minimizer of $\mathcal{E}(\overrightarrow{u})$ on $\mathcal{N}_{\gamma}$.
In the purely attractive case, since the $\{\widehat{\rho}_{ij,l}\}$, given by \eqref{eqnew3344}, are nonincreasing for $k$, the existence of ground states in the purely attractive case can also be obtained by iteration the arguments of Propositions~\ref{prop0014} and \ref{prop0005} from $3$ to $k$, under the similar assumptions on $\lambda_j$ and $\beta_{i,j}$.

$(2)$\quad For $(2)$ of Theorem~\ref{thm0002}, as in the proof of Proposition~\ref{prop0002}, we still assume the contrary that, \eqref{eqn0001} has a ground state $\overrightarrow{u}_\delta$ under the assumptions of $(2)$ of Theorem~\ref{thm0002} for $\delta>0$ sufficiently small.
We define functionals as follows:
\begin{eqnarray*}\label{eqn0004}
\mathcal{E}_{s}(\overrightarrow{u})=\sum_{j=a_{s-1}+1}^{a_s}(\frac{1}{2}\|u_j\|_{\lambda_j}^2
-\frac{1}{4}\mu_j\|u_j\|_{4}^4)-\frac12\sum_{i\not=j,(i,j)\in\mathcal{K}_{s,s,\mathbf{a}_d}}\beta_{i,j}\|u_iu_j\|_2^2
\end{eqnarray*}
for $s=1,2,\cdots,s_0$ and $\mathcal{E}_{a_s}(u)=\frac{1}{2}\|u\|_{\lambda_{a_s}}^2-\frac{\mu_{a_s}}{4}\|u\|_{4}^4$
for $s=s_0+1,\cdots,m$.  We define the corresponding Nihari manifolds as follows:
\begin{eqnarray*}
\mathcal{N}_s=\{\overrightarrow{u}\in\prod_{j=a_{s-1}+1}^{a_s}(\mathcal{H}_j\backslash\{0\})\mid (\mathcal{G}_{a_{s-1}+1,s}(\overrightarrow{u}), \cdots, \mathcal{G}_{a_s,s}(\overrightarrow{u}))=\overrightarrow{0}\}
\end{eqnarray*}
with
\begin{eqnarray*}
\mathcal{G}_{j,s}(\overrightarrow{u})=\|u_j\|_{\lambda_j}^2-\mu_j\|u_j\|_{4}^4
-\sum_{i=a_{s-1}+1,i\not=j}^{a_s}\beta_{i,j}\|u_iu_j\|_2^2
\end{eqnarray*}
for $s=1,2,\cdots,s_0$ and
\begin{eqnarray}\label{eqn0034}
\mathcal{M}_{a_s}=\{\overrightarrow{u}\in\mathcal{H}_{a_s}\backslash\{0\}\mid \mathcal{Q}_{a_s}(u):=\|u\|_{\lambda_{a_s}}^2-\mu_{a_s}\|u\|_{4}^4=0\}
\end{eqnarray}
for $s=s_0+1,\cdots,m$.  Let
\begin{eqnarray*}
\mathcal{C}_{\mathcal{N}_{s}}=\inf_{\mathcal{N}_{s}}\mathcal{E}_{s}(\overrightarrow{u})
\quad\text{and}\quad
\mathcal{C}_{\mathcal{M}_{a_s}}=\inf_{\mathcal{M}_{a_s}}\mathcal{E}_{a_s}(u).
\end{eqnarray*}
Then $\mathcal{C}_{\mathcal{N}_{s}}$ and $\mathcal{C}_{\mathcal{M}_{a_s}}$ are all well defined and nonnegative.
As in Remark~\ref{rmk0001}, since $\beta_{i,j}>0$ in $\mathcal{E}_{l}(\overrightarrow{u})$ for all $(i,j)\in\{i\not=j,(i,j)\in\mathcal{K}_{l,l,\mathbf{a}_d}\}$ and all $1\leq l\leq s_0$,
\begin{eqnarray}\label{eqn0008}
\mathcal{C}_{\mathcal{N}_l}\leq\sum_{j=a_{l-1}+1}^{a_l}\mathcal{E}_j(w_j)-\frac{(1+o_\delta(1))}{2}
\sum_{(i,j)\in\mathcal{K}_{l,l,\mathbf{a}_d};i\not=j}\beta_{i,j}\|w_iw_j\|_2^2
\end{eqnarray}
for $1\leq l\leq s_0$.
On the other hand, by similar calculations as for \eqref{eqnew9997},
\begin{eqnarray}\label{eqn0009}
\mathcal{C}_{\mathcal{N}}\geq\sum_{j=1}^k\mathcal{E}_j(w_j)-\frac{(1+o_\delta(1))}{2}
\sum_{s,t=1}^{d}\sum_{(i,j)\in\mathcal{K}_{s,t,\mathbf{a}_d};i\not=j}\beta_{i,j}\|u_{i,\delta}u_{j,\delta}\|_2^2.
\end{eqnarray}
It follows from $t_{max,-}<t_{min,+}$ and $t_{0}< t_{min,int,+}$ that
\begin{eqnarray}\label{eqn0012}
\|u_{i,\delta}u_{j,\delta}\|_2^2=o_\delta(1)\quad\text{for all }(i,j)\in\mathcal{K}_{t,s,\mathbf{a}_m}, i\not=j\quad\text{and}\quad t\not=s.
\end{eqnarray}
By Lions' lemma and the Sobolev embedding theorem,  there exists $\{y_{j,\delta}\}\subset\bbr^N$ such that $u_{j,\delta}(x+y_{j,\delta})\to w_j$ strongly in $\mathcal{H}_j$ as $\delta\to0$ up to a subsequence.
Applying the Moser iteration and the elliptic estimates, as that used in the proof of Lemma~\ref{lem0002}, yields that $u_{j,\delta}(x+y_{j,\delta})\to w_j$ strongly in $L^\infty(\bbr^N)$ as $\delta\to0$ up to a subsequence.  Without loss of generality, $y_{j,\delta}$ can be chosen to be the maximum point of $u_{j,\delta}$.  By a similar argument as for \eqref{eqnew0002}, it is standard to show that
\begin{eqnarray*}
\mathcal{C}_{\mathcal{N}}\leq\sum_{l=1}^{s_0}\mathcal{C}_{\mathcal{N}_{l}}+\sum_{s=s_0+1}^d\mathcal{C}_{\mathcal{M}_{a_s}}.
\end{eqnarray*}
Thus, by a similar calculation as for \eqref{eqnew9987},
\begin{eqnarray}\label{eqnew3355}
\sum_{s,t=1;s<t}^{d}\sum_{(i,j)\in\mathcal{K}_{s,t,\mathbf{a}_m}}\beta_{i,j}\|u_{i,\delta}u_{j,\delta}\|_2^2\geq0
\end{eqnarray}
for $\delta>0$ sufficiently small.  It follows from \eqref{eqn0008} and \eqref{eqn0009} that
\begin{eqnarray}\label{eqn0011}
\sum_{l=1}^{s_0}\sum_{(i,j)\in\mathcal{K}_{l,l,\mathbf{a}_d};i\not=j}\widehat{\beta}_{i,j}\|u_{i,\delta}u_{j,\delta}\|_2^2\geq
\sum_{l=1}^{s_0}\sum_{(i,j)\in\mathcal{K}_{l,l,\mathbf{a}_d};i\not=j}\widehat{\beta}_{i,j}\|w_iw_{j}\|_2^2+o_{\delta}(1).
\end{eqnarray}
Thus, $y_{i,\delta}-y_{j,\delta}=y_{ij}+o_\delta(1)$ and
\begin{eqnarray}\label{eqn0010}
1\lesssim\|u_{i,\delta}u_{j,\delta}\|_2^2\quad\text{for all }(i,j)\in\mathcal{K}_{l,l,\mathbf{a}_d}, i\not=j\text{ and all }l=1,2,\cdots,s_0
\end{eqnarray}
Let $F(\mathbf{y})=\sum_{s=1}^{d}\sum_{(i,j)\in\mathcal{K}_{s,s,\mathbf{a}_d};i\not=j}\beta_{i,j}\|w_iw_{j,y_{ij}}\|_2^2$.  Since $w_j(x)$ is strictly decreasing for $|x|$, by a similar argument as that used for \eqref{eqnew6666}, $\nabla F(\mathbf{y})=0$ if and only if $\mathbf{y}=\mathbf{0}$.  Thus, by \eqref{eqn0011}, $y_{ij}=0$ for all $(i,j)\in\mathcal{K}_{l,l,\mathbf{a}_d}$ with $i\not=j$ and all $l=1,2,\cdots,s_0$.  Thus, without loss of generality, we assume $y_{i,\delta}=y_{j,\delta}=y_{l,\delta}$ for all $(i,j)\in\mathcal{K}_{l,l,\mathbf{a}_d}$ with $i\not=j$ and all $l=1,2,\cdots,s_0$ with $\delta>0$ sufficiently small.  By \eqref{eqn0012} and the Lebesgue dominated convergence theorem, $|y_{l,\delta}-y_{l',\delta}|\to+\infty$ for all $l,l'=1,2,\cdots,d$ with $l\not=l'$.  We denote $y_{l,\delta}-y_{l',\delta}$ by $y_{ll',\delta}$, for the sake of simplicity.  Then, by similar arguments as for \eqref{eqnew9994} and \eqref{eqnew9993},
\begin{eqnarray}
&&\sum_{s,t=1;s<t}^{d}\sum_{(i,j)\in\mathcal{K}_{s,t,\mathbf{a}_d}}\beta_{i,j}\|u_{i,\delta}u_{j,\delta}\|_2^2\notag\\
&=&\sum_{s,t=1;s<t}^{d}\sum_{(i,j)\in\mathcal{K}_{s,t,\mathbf{a}_d}}\bigg(\sum_{\lambda_i=\lambda_j}
C_{i,j}\beta_{i,j}(\frac{1}{|y_{st,\delta}|})^{N-1-\alpha}e^{-2\sqrt{\lambda_i}|y_{st,\delta}|}\notag\\
&&+\sum_{\lambda_i\not=\lambda_j}C_{i,j}\beta_{i,j}(\frac{1}{|y_{st,\delta}|})^{N-1}
e^{-2\min\{\sqrt{\lambda_i},\sqrt{\lambda_j}\}|y_{st,\delta}|}\bigg).
\label{eqn0013}
\end{eqnarray}
Since
\begin{eqnarray*}
\min\{\sqrt{\lambda_{i_0}}, \sqrt{\lambda_{j_0}}\}\geq\min\{\sqrt{\lambda_{i_0'}}, \sqrt{\lambda_{j_0'}}\}
\end{eqnarray*}
for all $(i_0,j_0)$ and $(i_0',j_0')$ with $\beta_{i_0,j_0}>0>\beta_{i_0',j_0'}$, by $t_{max,-}<t_{min,+}$ and \eqref{eqn0013},
\begin{eqnarray*}
\sum_{s,t=1;s<t}^{d}\sum_{(i,j)\in\mathcal{K}_{s,t,\mathbf{a}_d}}\beta_{i,j}\|u_{i,\delta}u_{j,\delta}\|_2^2<0
\end{eqnarray*}
for $\delta>0$ sufficiently small, which contradicts \eqref{eqnew3355}.  Hence, \eqref{eqn0001} has no ground states for $\delta>0$ sufficiently small under the conditions of $(2)$ of Theorem~\ref{thm0002}.

$(3)$\quad  In the purely repulsive case, this result has been proved in \cite{LW05}.  For the repulsive-mixed case, by regarding the blocks in optimal block decompositions as a whole, we can follow the argument as used in the proof of \cite[Theorem~3]{LW05} to show that the ground states of \eqref{eqnewnew0001} do not exist.
\hfill$\Box$

\section{Appendix:Proof of Lemma \ref{lemn0010}}
\begin{proof}
When $\lambda_i\not=\lambda_j$, the Lemma is proved in \cite[Lemma~6]{LW05}.  Thus, we assume that $\lambda_i=\lambda_j=\lambda$.
Let $M>0$ be sufficiently large but fixed such that the decay estimate~\eqref{eqnew9998} holds for $w_j$ with $|x|>M$.
We first consider the case $N=1$.  Without loss of generality, we assume that $\lambda_i=\lambda_j=\lambda_1$ and $w_i=w_j=w_1$.  Moreover, we also assume that $e_1=1$.  Then, $Re_1=R$ and for $R>0$ sufficiently large,
\begin{eqnarray*}
&&\int_{-\infty}^{+\infty}w_1^2(|x|)w_1^2(|x-R|)dx\\
&=&\int_{-\infty}^{-M}w_1^2(|x|)w_1^2(|x-R|)dx+\int_{-M}^{M}w_1^2(|x|)w_1^2(|x-R|)dx\\
&&+\int_{M}^{R-M}w_1^2(|x|)w_1^2(|x-R|)dx+\int_{R-M}^{R+M}w_1^2(|x|)w_1^2(|x-R|)dx\\
&&+\int_{R+M}^{+\infty}w_1^2(|x|)w_1^2(|x-R|)dx.
\end{eqnarray*}
By symmetry,
\begin{eqnarray*}
\int_{-\infty}^{-M}w_1^2(|x|)w_1^2(|x-R|)dx=\int_{R+M}^{+\infty}w_1^2(|x|)w_1^2(|x-R|)dx
\end{eqnarray*}
and
\begin{eqnarray*}
\int_{-M}^{M}w_1^2(|x|)w_1^2(|x-R|)dx=\int_{R-M}^{R+M}w_1^2(|x|)w_1^2(|x-R|)dx.
\end{eqnarray*}
For $\int_{-M}^{M}w_1^2(|x|)w_1^2(|x-R|)dx$, we estimate by \eqref{eqnew9998} as follows:
\begin{eqnarray*}
\int_{-M}^{M}w_1^2(|x|)w_1^2(|x-R|)dx&\sim&\int_{-M}^{M}w_1^2(|x|)e^{-2\sqrt{\lambda_1}|x-R|}dx\\
&=&\int_{-M}^{M}w_1^2(|x|)e^{-2\sqrt{\lambda_1}(R-x)}dx\\
&=&e^{-2\sqrt{\lambda_1}R}\int_{-M}^{M}w_1^2(|x|)e^{2\sqrt{\lambda_1}x}dx\\
&\sim&e^{-2\sqrt{\lambda_1}R}
\end{eqnarray*}
as $R\to+\infty$.
For $\int_{-\infty}^{-M}w_1^2(|x|)w_1^2(|x-R|)dx$, we estimate by \eqref{eqnew9998} as follows:
\begin{eqnarray*}
\int_{-\infty}^{-M}w_1^2(|x|)w_1^2(|x-R|)dx&\sim&\int_{-\infty}^{-M}e^{-2\sqrt{\lambda_1}|x|}e^{-2\sqrt{\lambda_1}|x-R|}dx\\
&=&\int_{-\infty}^{-M}e^{2\sqrt{\lambda_1}x}e^{-2\sqrt{\lambda_1}(R-x)}dx\\
&=&e^{-2\sqrt{\lambda_1}R}\int_{-\infty}^{-M}e^{4\sqrt{\lambda_1}x}dx\\
&\sim&e^{-2\sqrt{\lambda_1}R}
\end{eqnarray*}
as $R\to+\infty$.  For $\int_{M}^{R-M}w_1^2(|x|)w_1^2(|x-R|)dx$, we estimate by \eqref{eqnew9998} as follows:
\begin{eqnarray*}
\int_{M}^{R-M}w_1^2(|x|)w_1^2(|x-R|)dx&\sim&\int_{M}^{R-M}e^{-2\sqrt{\lambda_1}|x|}e^{-2\sqrt{\lambda_1}|x-R|}dx\\
&=&\int_{M}^{R-M}e^{-2\sqrt{\lambda_1}x}e^{-2\sqrt{\lambda_1}(R-x)}dx\\
&=&e^{-2\sqrt{\lambda_1}R}(R-2M)\\
&\sim& Re^{-2\sqrt{\lambda_1}R}
\end{eqnarray*}
as $R\to+\infty$.  Thus, $\int_{-\infty}^{+\infty}w_1^2(|x|)w_1^2(|x-R|)dx\sim Re^{-2\sqrt{\lambda_1}R}$ as $R\to+\infty$.  Without loss of generality, we assume that $e_1=(0,1)$ for $N=2$ and $e_1=(0,0,1)$ for $N=3$.  Thus, for the cases $N=2,3$, by symmetry,
\begin{eqnarray*}
&&\int_{\bbr^N}w_1^2(|x|)w_1^2(|x-Re_1|)dx\\
&=&\int_{\{|x|\leq M\}}w_1^2(|x|)w_1^2(|x-Re_1|)dx+\int_{\{|x-Re_1|\leq M\}}w_1^2(|x|)w_1^2(|x-Re_1|)dx\\
&&+\int_{\{M<|x|\leq \frac{R}{2}\}}w_1^2(|x|)w_1^2(|x-Re_1|)dx+\int_{\{M<|x-Re_1|\leq \frac{R}{2}\}}w_1^2(|x|)w_1^2(|x-Re_1|)dx\\
&&+\int_{\{|x|>\frac{R}{2}\}\cap\{|x-Re_1|>\frac{R}{2}\}}w_1^2(|x|)w_1^2(|x-Re_1|)dx\\
&=&2\int_{\{|x|\leq M\}}w_1^2(|x|)w_1^2(|x-Re_1|)dx+2\int_{\{M<|x|\leq \frac{R}{2}\}}w_1^2(|x|)w_1^2(|x-Re_1|)dx\\
&&+\int_{\{|x|>\frac{R}{2}\}\cap\{|x-Re_1|>\frac{R}{2}\}}w_1^2(|x|)w_1^2(|x-Re_1|)dx
\end{eqnarray*}
for $R>0$ sufficiently large.  For $\int_{\{|x|\leq M\}}w_1^2(|x|)w_1^2(|x-Re_1|)dx$, we estimate by \eqref{eqnew9998} as follows:
\begin{eqnarray*}
\int_{\{|x|\leq M\}}w_1^2(|x|)w_1^2(|x-Re_1|)dx&\sim&\int_{\{|x|\leq M\}}w_1^2(|x|)|x-Re_1|^{1-N}e^{-2\sqrt{\lambda_1}|x-Re_1|}dx\\
&\lesssim&R^{1-N}e^{-2\sqrt{\lambda_1}R}\int_{\{|x|\leq M\}}w_1^2(|x|)e^{2\sqrt{\lambda_1}|x|}dx
\end{eqnarray*}
as $R\to+\infty$.  For $\int_{\{|x|>\frac{R}{2}\}\cap\{|x-Re_1|>\frac{R}{2}\}}w_1^2(|x|)w_1^2(|x-Re_1|)dx$, we estimate by \eqref{eqnew9998} as follows:
\begin{eqnarray*}
&&\int_{\{|x|>\frac{R}{2}\}\cap\{|x-Re_1|>\frac{R}{2}\}}w_1^2(|x|)w_1^2(|x-Re_1|)dx\\
&\sim&\int_{\{|x|>\frac{R}{2}\}\cap\{|x-Re_1|>\frac{R}{2}\}}(|x||x-Re_1|)^{1-N}e^{-2\sqrt{\lambda_1}|x|}e^{-2\sqrt{\lambda_1}|x-Re_1|}dx\\
&\lesssim&R^{1-N}e^{-\sqrt{\lambda_1}R}\int_{\{|x|>\frac{R}{2}\}}|x|^{1-N}e^{-2\sqrt{\lambda_1}|x|}dx\\
&=&R^{1-N}e^{-\sqrt{\lambda_1}R}\int_{\frac{R}{2}}^{+\infty}e^{-2\sqrt{\lambda_1}r}dr\\
&\sim&R^{1-N}e^{-2\sqrt{\lambda_1}R}
\end{eqnarray*}
as $R\to+\infty$.  For $\int_{\{M<|x|\leq \frac{R}{2}\}}w_1^2(|x|)w_1^2(|x-Re_1|)dx$, we denote $x=(x',x_1)$.  Then,
\begin{eqnarray}\label{eqnew9654}
|Re_1-x|-R\sim-x_1+\frac{|x|^2}{2R}\quad\text{uniformly for }M<|x|\leq \frac{R}{2}.
\end{eqnarray}
Thus, by \eqref{eqnew9998} and $\frac{R}{2}\leq|x-Re_1|\leq\frac{3R}{2}$ uniformly for $M<|x|\leq \frac{R}{2}$,
\begin{eqnarray}
&&\int_{\{M<|x|\leq \frac{R}{2}\}}w_1^2(|x|)w_1^2(|x-Re_1|)dx\notag\\
&\sim&\int_{\{M<|x|\leq \frac{R}{2}\}}(|x||x-Re_1|)^{1-N}e^{-2\sqrt{\lambda_1}|x|}e^{-2\sqrt{\lambda_1}|x-Re_1|}dx\notag\\
&\sim&R^{1-N}e^{-2\sqrt{\lambda_1}R}\int_{\{M<|x|\leq \frac{R}{2}\}}|x|^{1-N}e^{-2\sqrt{\lambda_1}(|x|+\frac{|x|^2}{2R}-x_1)}dx\label{eqnew9964}
\end{eqnarray}
We estimate the upper bound as follows:
\begin{eqnarray*}
&&\int_{\{M<|x|\leq \frac{R}{2}\}}w_1^2(|x|)w_1^2(|x-Re_1|)dx\\
&\lesssim&R^{1-N}e^{-2\sqrt{\lambda_1}R}\int_{0}^{\frac{\pi}{2}}\int_{M}^{\frac{R}{2}}e^{-2\sqrt{\lambda_1}(r-rcos\rho)}drd\rho\\
&\lesssim&R^{1-N}e^{-2\sqrt{\lambda_1}R}\int_{0}^{\frac{\pi}{2}}\int_{M}^{\frac{R}{2}}e^{-2\sqrt{\lambda_1}r(\sin\rho)^2}drd\rho\\
&=&R^{1-N}e^{-2\sqrt{\lambda_1}R}\int_{0}^{\frac{\pi}{2}}\int_{M}^{\frac{R}{2}}e^{-2\sqrt{\lambda_1}r\rho^2(\frac{\sin\rho}{\rho})^2}drd\rho\\
&\sim&R^{1-N}e^{-2\sqrt{\lambda_1}R}\int_{0}^{\frac{\pi}{2}}\int_{M}^{\frac{R}{2}}e^{-2\sqrt{\lambda_1}r\rho^2}drd\rho\\
&\sim&R^{1-N}e^{-2\sqrt{\lambda_1}R}\int_{M}^{\frac{R}{2}}r^{-\frac{1}{2}}dr\int_{0}^{+\infty}e^{-2\sqrt{\lambda_1}y^2}dy\\
&\sim&R^{\frac{3}{2}-N}e^{-2\sqrt{\lambda_1}R}.
\end{eqnarray*}
For the lower bound, we estimate it as follows:
\begin{eqnarray*}
&&\int_{\{M<|x|\leq \frac{R}{2}\}}w_1^2(|x|)w_1^2(|x-Re_1|)dx\notag\\
&\gtrsim&R^{1-N}e^{-2\sqrt{\lambda_1}R}\int_{\{M<|x|\leq \frac{R}{2}\}}|x|^{1-N}e^{-2\sqrt{\lambda_1}(|x|+\frac{|x|^2}{2R}-x_1)}dx\\
&\gtrsim&R^{1-N}e^{-2\sqrt{\lambda_1}R}\int_{0}^{\frac{\pi}{4}}(\sin\rho)^{N-2}\int_{M}^{\frac{R}{2}}e^{-4\sqrt{\lambda_1}r\cos^2\rho}drd\rho\\
&\gtrsim&R^{1-N}e^{-2\sqrt{\lambda_1}R}\int_{0}^{\frac{\pi}{4}}\sin\rho\int_{M}^{\frac{R}{2}}e^{-4\sqrt{\lambda_1}r\cos^2\rho}drd\rho\\
&\sim&R^{1-N}e^{-2\sqrt{\lambda_1}R}\int_{M}^{\frac{R}{2}}r^{-\frac{1}{2}}dr\int_{0}^{+\infty}e^{-2\sqrt{\lambda_1}y^2}dy\\
&\sim&R^{\frac{3}{2}-N}e^{-2\sqrt{\lambda_1}R}.
\end{eqnarray*}
The proof is thus completed.
\end{proof}

\section{Acknowledgements}
The research of J. Wei is
partially supported by NSERC of Canada.  The research of Y. Wu is supported by NSFC (No. 11701554, No. 11771319), the Fundamental Research Funds for the Central Universities (2017XKQY091) and Jiangsu overseas visiting scholar program for university prominent young $\&$ middle-aged teachers and presidents.  This paper was completed when Y. Wu was visiting University of British Columbia.  He is
grateful to the members in Department of Mathematics at University of British Columbia for their invitation and hospitality.

\end{document}